\newtheorem{thm}{Theorem}[section]
\newtheorem{lemma}[thm]{Lemma}
\newtheorem{prop}[thm]{Proposition}
\newtheorem{cor}[thm]{Corollary}
\theoremstyle{definition}
\newtheorem{definition}[thm]{Definition}
\newtheorem{remark}[thm]{Remark}
\newtheorem{ntt}[thm]{Notation}
\newtheorem{example}[thm]{Example}
\newcommand{\Ad}{\operatorname{Ad}}
\newcommand{\Aut}{\operatorname{Aut}}
\newcommand{\SL}{\operatorname{SL}}
\newcommand{\GL}{\operatorname{GL}}
\newcommand{\U}{\operatorname{U}}
\newcommand{\N}{\mathbb{N}}
\newcommand{\Z}{\mathbb{Z}}
\newcommand{\Q}{\mathbb{Q}}
\newcommand{\R}{\mathbb{R}}
\newcommand{\T}{\mathbb{T}}
\newcommand{\TT}{\mathcal{T}}
\newcommand{\A}{\mathcal{A}}
\newcommand{\BB}{\mathcal{B}}
\newcommand{\af}{\alpha}
\newcommand{\bt}{\beta}
\newcommand{\Ld}{\Lambda}
\newcommand{\w}{\wedge}
\newcommand{\rk}{\operatorname{rk}}
\begin{document}

\title{Finite groups acting on higher dimensional noncommutative tori}

\author[J. A. Jeong]{Ja A Jeong$^{\dagger}$}
\thanks{Research partially supported by  NRF-2012R1A1A2008160$^{\dagger}$.}
\address{
Department of Mathematical Sciences and Research Institute of Mathematics\\
Seoul National University\\
Seoul, 151--747\\
Korea} \email{jajeong\-@\-snu.\-ac.\-kr }

\author[J. H. Lee]{Jae Hyup Lee $^{\dagger}$}
\address{
Department of Mathematical Sciences\\
Seoul National University\\
Seoul, 151--747\\
Korea} \email{jjub9831\-@\-snu.\-ac.\-kr}

\keywords{noncommutative torus, group action, $C^*$-crossed product}

\subjclass[2010]{46L35, 22D25}

\begin{abstract}
For the canonical action $\alpha$ of $\operatorname{SL}_2(\mathbb{Z})$ on 
2-dimensional simple rotation algebras $\mathcal{A}_\theta$,  
it is  known that if $F$ is a finite subgroup of $\operatorname{SL}_2(\mathbb{Z})$, 
the crossed products $\mathcal{A}_\theta\rtimes_\alpha F$ are all AF algebras. 
  In this paper we show that  this is not the case for higher dimensional noncommutative tori. 
  More precisely, we show that 
  for each $n\geq 3$ there exist noncommutative simple $\phi(n)$-dimensional tori 
  $\mathcal{A}_\Theta$  which admit canonical action of $\mathbb{Z}_n$ and for each
  odd $n\geq 7$ with $2\phi(n)\geq n+5$ their crossed products 
  $\mathcal{A}_\Theta\rtimes_\alpha \mathbb{Z}_n$ are not AF 
  (with nonzero $K_1$-groups).  
It is also shown that the only possible canonical action by a finite group on a 
$3$-dimensional simple torus is the flip action by $\mathbb{Z}_2$. 
Besides, we discuss the canonical actions by finite groups 
$\mathbb{Z}_5, \mathbb{Z}_8, \mathbb{Z}_{10}$, and $\mathbb{Z}_{12}$ 
on the $4$-dimensional torus 
of the form $\mathcal{A}_\theta\otimes \mathcal{A}_\theta$. 
\end{abstract}

\maketitle

\section{Introduction}

\vskip 1pc

\noindent 
The {\it rotation algebra} $\A_\theta$, $\theta \in \R$, 
is the universal $C^*$-algebra generated by two unitaries $u_1, u_2$ satisfying 
the commutation relation
$u_2 u_1=\exp (2 \pi i \theta)u_1 u_2$. 
If $u_1$ and $u_2$ commute (that is, if $\theta \in \Z$), 
$\A_\theta$ is isomorphic to the commutative 
$C^*$-algebra $C(\T^2)$ of all continuous functions 
on the $2$-dimensional torus $\T^2$, and so the rotation algebras  
$\A_\theta$ are often called  $2$-dimensional noncommutative tori.
If $\theta\in \R\setminus \Q$, $\A_\theta$ is called an 
{\it irrational rotational algebra} and this is the case exactly when 
$\A_\theta$ is a simple $C^*$-algebra. 

\vskip .5pc

More generally, for $d\geq 2$, 
a {\it noncommutative $d$-dimensional torus} (or simply 
a {\it  $d$-torus}) $\A_\Theta$ associated with a skew symmetric real 
$d\times d$ matrix $\Theta=(\theta_{kj})$ is  
the universal $C^*$-algebra generated by $d$ unitaries 
$u_1,\dots,u_d$ that are subject to the commutation relations
\begin{equation}\label{commutation relation}
 u_j u_k = \exp(2\pi i \theta_{kj}) u_k u_j.
\end{equation} 
 $\A_\Theta$  was introduced in \cite{Rf90} as 
the twisted group algebra $C^*(\Z^d, \omega_\Theta)$  
of $\Z^d$ twisted by the 2-cocycle $\omega_\Theta$ given in (\ref{omega}). 

\vskip .5pc

In \cite{Wt} Watatani considered an automorphism $\af_A$,  
$A=( a_{ij}) \in \SL_2(\Z)$,  
on an irrational rotational algebra $\A_\theta$ defined by  
\begin{equation}\label{canonical action}   
   \af_A(u_1)= \exp(\pi i \theta a_{11}a_{21})u_1^{a_{11}} u_2^{a_{21}},\  
   \af_A(u_2)= \exp(\pi i \theta a_{12}a_{22})u_1^{a_{12}} u_2^{a_{22}} 
\end{equation}
and then classified these automorphisms using the notion of $K_1$-entropy. 
Brenken \cite{Br} used the automorphism to study representations of 
rotational algebras. 
In this paper, the action $A\mapsto \af_A: \SL_2(\Z)\to Aut(A_\theta)$ 
and its $d$-dimensional version (Definition~\ref{canonical automorphism}) 
will be called a {\it canonical action}. 
 
\vskip .5pc
 
The group $\SL_2(\Z)$ is known to have only four (up to conjugacy) 
nontrivial finite subgroups which are isomorphic to $\Z_2$, $\Z_3$, $\Z_4$, and $\Z_6$. 
 The crossed products $\A_\theta\rtimes_\af \Z_k$
 of a simple $\A_\theta$  
 by the restriction of the canonical action $\af$ to $\Z_k$, $k=2,3,4,6$, 
 are all known to be AF-algebras and moreover their $K_0$ groups 
 are computed (see \cite[Theorem 0.1]{ELPW}), which implies 
 $\A_{\theta_1}\rtimes_\af \Z_k\cong \A_{\theta_2}\rtimes_\af \Z_l$ 
 if and only if  $k=l$ and $\theta_1=\pm \theta_2 \  {\rm mod}\, \Z$.
Also it is known in the same paper \cite{ELPW} that 
$\A_\Theta\rtimes_\sigma \Z_2$ is an AF algebra if $\A_\Theta$ is a simple 
$d$-dimensional noncommutative torus and $\sigma$ is the 
action given by the flip automorphism sending the unitary 
generators $u_j$ to their adjoints $u_j^*$ for $j=1,\dots, d$. 
This seminal work \cite{ELPW} was actually motivated, 
as reviewed in the first chapter there, 
by several previous studies including, for example, 
the result \cite{Wl} that for most irrational  numbers  $\theta$, 
the crossed products $\A_\theta\rtimes_\af \Z_4$ are AF algebras,  
and it finally settled down the case $\A_\theta\rtimes_\af F$ 
for any (2-dimensional) irrational rotational algebras 
$\A_\theta$  and any finite groups $F\subset \SL_2(\Z)$.  

\vskip .5pc

It would then be a very natural question to ask whether the crossed product  
$\A_\Theta\rtimes_\af G$ of a  simple higher dimensional noncommutative $d$-torus 
$\A_\Theta$ is still AF even when $\af$ is the canonical action of a finite subgroup 
$G$ of $\SL_d(\Z)$ (or $\GL_d(\Z)$). 
But it was unclear, at least to the knowledge of 
the authors, even whether there are
any known finite groups acting canonically on 
some higher dimensional noncommutative simple tori, 
except the flip action by $\Z_2$,  when the authors got interested in this question. 
The purpose of this paper is thus to find finite subgroups $G$ of 
$\GL_d(\Z)$ which act canonically on 
higher dimensional noncommutative simple $d$-tori $\A_\Theta$ 
and to figure out if there are simple crossed products $\A_\Theta\rtimes_\af G$ 
which are not AF. 

\vskip .5pc

In order to show that such a crossed product is not AF,
it is enough to see that the $K_1$-group of $\A_\Theta\rtimes_\af G$ is nonzero.
From the general theory developed in \cite{ELPW}, we can 
deduce without difficulty that the $K$-groups $K_*(\A_\Theta\rtimes_\af G)$ 
are equal to the $K$-groups $K_*(C^*(\Z^d\rtimes G))$ of the  
semidirect product group $C^*$-algebras. 

\vskip .5pc

For the finite subgroups $G$ of $\GL_d(\Z)$, we will use the companion matrix 
$C_n$ of the $n$th cyclotomic polynomial; the matrix 
$C_n$ is a $d\times d$ matrix, $d=\phi(n)$, and is of order $n$. 
The finite cyclic group $\Z_n=\langle C_n \rangle$ generated by $C_n$ 
then acts on $\Z^d$ by conjugation and we have the semidirect 
product group $\Z^d\rtimes \Z_n$. 
We show  the following theorem with the aid of 
the recent results on topological $K$-theory of 
group $C^*$-algebras known in \cite{LL12}:

\vskip .5pc

\begin{thm}{\rm (Theorem~\ref{nonzero K_1})}\label{thm 1} 
Let $n\geq 7$ be an odd integer with $d:=\phi(n)$. 
If $2d\geq n+5$, then $$K_1(C^*(\Z^d\rtimes_\af \Z_n))\neq 0,$$ 
where $\af$ is the conjugation action of $\Z_n=\langle C_n \rangle$ on $\Z^d$.  
\end{thm}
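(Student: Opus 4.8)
The plan is to combine the Baum--Connes machinery of \cite{LL12} with a representation-theoretic reduction to a zero-sum counting problem. Since $\Z^d\rtimes_\af\Z_n$ is virtually abelian, hence amenable, the Baum--Connes conjecture holds, and the computation of \cite{LL12} applies provided the conjugation action is \emph{free}. I would first record freeness: the eigenvalues of $C_n$ are the primitive $n$-th roots of unity, so for $0<k<n$ the eigenvalues of $C_n^k$ are $\zeta_n^{kj}$ with $\zeta_n$ primitive, none of which equals $1$; thus $\det(I-C_n^k)\ne0$ and no nonzero vector of $\Z^d$ (or of $\R^d$) is fixed by a nontrivial element of $\Z_n$, which is exactly the hypothesis needed to invoke \cite{LL12}.

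With freeness in hand, the singular set of the $\Z^d\rtimes\Z_n$-action on the standard model $\underline{E}(\Z^d\rtimes\Z_n)=\R^d$ is a discrete union of points, whose stabilizer contributions to $K_*$ are concentrated in \emph{even} degree. Consequently all of $K_1$ originates from the homological (free) part, and rationally
\[
K_1\big(C^*(\Z^d\rtimes_\af\Z_n)\big)\otimes\Q \;\cong\; \bigoplus_{i\ \mathrm{odd}}\big(\Ld^i\Q^d\big)^{\Z_n},
\]
the odd part of the $\Z_n$-invariants of the exterior algebra $\Ld^*\Q^d=H^*(\Z^d;\Q)$, with $\Z_n$ acting through $\Ld^*C_n$. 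Hence it suffices to exhibit a single odd $i$ with $(\Ld^i\Q^d)^{\Z_n}\ne0$, and I would target $i=3$.

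Over $\C$, the space $\Q^d\otimes\C$ splits into the eigenlines indexed by $R=(\Z/n)^\times$ (eigenvalue $\zeta_n^{j}$ on the line $j\in R$), so a standard basis vector of $\Ld^3$ is $\Z_n$-fixed exactly when its index set $\{a,b,c\}\subseteq R$ satisfies $a+b+c\equiv 0\ (\mathrm{mod}\ n)$. Thus $\dim(\Ld^3\Q^d)^{\Z_n}$ counts these zero-sum $3$-subsets. Evaluating the ordered count through the character sum $\frac1n\sum_{t}\big(\sum_{j\in R}\zeta_n^{tj}\big)^3$ turns the inner sum into a Ramanujan sum $c_n(t)$, and $\frac1n\sum_t c_n(t)^3$ simplifies to $\phi(n)\,f(n)$ with $f(n)=n\prod_{p\mid n}(1-2/p)$; subtracting the $3\phi(n)$ degenerate (repeated-index) solutions, which is legitimate because $n$ is odd, yields
\[
\dim\big(\Ld^3\Q^d\big)^{\Z_n}=\tfrac{1}{6}\,\phi(n)\,\big(f(n)-3\big),
\]
so the theorem reduces to the arithmetic inequality $f(n)>3$.

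The final and most delicate step is to extract $f(n)>3$ from the hypothesis $2\phi(n)\ge n+5$. Here I would prove the elementary but non-obvious inequality $f(n)\ge 2\phi(n)-n$, equivalently $\prod_{p\mid n}(1-2/p)\ge 2\prod_{p\mid n}(1-1/p)-1$, which holds with equality precisely for prime powers and follows by induction on the number of prime divisors (each new prime contributes a nonnegative correction term). Granting this, the hypothesis gives $f(n)\ge 2\phi(n)-n\ge 5$; and since $n$ is odd while $2\phi(n)$ is even, the quantity $2\phi(n)-n$ is odd, so $2\phi(n)\ge n+5$ is exactly the integral form of $f(n)>3$. Therefore $\dim(\Ld^3\Q^d)^{\Z_n}=\frac16\phi(n)(f(n)-3)\ge\frac13\phi(n)>0$, whence $K_1\ne0$. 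I anticipate the main obstacle to be organizing the two reductions cleanly: quoting \cite{LL12} so that only odd-degree invariants survive into $K_1$, and then establishing $f(n)\ge 2\phi(n)-n$, which is what pins down the precise constant ``$+5$'' appearing in the statement.
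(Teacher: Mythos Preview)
Your proof is correct and takes a genuinely different route from the paper's. Both arguments begin the same way: freeness of the $\Z_n$-action on $\Z^d\setminus\{0\}$ allows one to invoke \cite{LL12} (Theorem~\ref{LL12} here), reducing the claim to finding a nonzero $\Z_n$-invariant in some odd exterior power. From there they diverge. The paper argues constructively over $\Z$: it chooses a partition $\{1,\dots,d\}=I\sqcup J$ with $|I|,|J|$ odd (e.g.\ $I=\{1,2,d\}$), uses $2d\ge n+5$ to show that $J-I$ hits every nonzero residue mod $n$, and deduces that the cross terms $\Ld(\af_k)(e_I)\wedge\Ld(\af_t)(e_J)$ with $k\ne t$ all vanish; hence the wedge of the two Reynolds averages equals $n\,e_1\wedge\cdots\wedge e_d\ne0$, so each average is a nonzero odd-degree invariant. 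You instead complexify, diagonalize $C_n$, and identify $\dim(\Ld^3\Q^d)^{\Z_n}$ with the number of zero-sum $3$-subsets of $(\Z/n)^\times$, which you evaluate via Ramanujan sums as $\tfrac16\phi(n)(f(n)-3)$ with $f(n)=n\prod_{p\mid n}(1-2/p)$; the inductive inequality $f(n)\ge 2\phi(n)-n$ (equality on prime powers) then turns the hypothesis $2d\ge n+5$ into $f(n)\ge 5>3$. Your approach is more quantitative, giving an exact formula in degree~$3$ and a clean arithmetic explanation of the constant $+5$; the paper's approach is more elementary (no character sums), and its partition condition is later shown to be \emph{equivalent} to $2d\ge n+5$, so neither argument is wasteful.
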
 
 
\vskip .5pc 
\noindent 
For a prime number $n\geq 3$, it is known in \cite[Theorem 0.3]{DL} that 
$K_1(C^*(\Z^d\rtimes_\af \Z_n))= 0$ if and only if $n=3$ or 5. 

\vskip .5pc 
The remaining thing to answer our question is then to show that 
the cyclic group $\Z_n=\langle C_n \rangle$ can really act canonically 
on some noncommutative simple $d$-dimensional tori $\A_\Theta$, and 
we prove the following:

\vskip .5pc 

\begin{thm}{\rm (Theorem~\ref{nondegenerate theta})}\label{thm 2} 
Let $n\geq 3$ and $d:=\phi(n)$. 
Then there exist simple $d$-dimensional tori $\A_\Theta$ on which 
the group $\Z_n=\langle C_n\rangle$ acts canonically.
\end{thm}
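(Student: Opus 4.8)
The plan is to transport the entire problem into the cyclotomic field $\Q(\zt_n)$. I would identify $\Z^d$ with the ring of integers $\mathcal{O}=\Z[\zt_n]$ of $\Q(\zt_n)$ via the power basis $1,\zt_n,\dots,\zt_n^{d-1}$; under this identification the companion matrix $C_n$ is precisely multiplication by $\zt_n$, so the conjugation action of $\Z_n=\langle C_n\rangle$ becomes multiplication by powers of $\zt_n$ on $\mathcal{O}$. Two facts reduce the theorem to a clean statement about bilinear forms. First, a real skew-symmetric $\Theta$ yields a canonical action of $\langle C_n\rangle$ exactly when the alternating bicharacter $(x,y)\mapsto\exp(2\pi i\,x^{T}\Theta y)$ is preserved, i.e. $C_n^{T}\Theta C_n\equiv\Theta\pmod{M_d(\Z)}$. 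Second, $\A_\Theta$ is simple precisely when $\Theta$ is nondegenerate, meaning the only $x\in\Z^d$ with $\Theta x\in\Z^d$ is $x=0$. So I must produce a nondegenerate skew form on $\mathcal{O}$ that is invariant mod $\Z$ under simultaneous multiplication by $\zt_n$.

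I would construct such forms directly, and with exact (not merely mod-$\Z$) invariance. Let $\sm_1,\dots,\sm_{d/2}$ be representatives of the $d/2$ conjugate pairs of complex embeddings of $\Q(\zt_n)$ (the field is totally complex since $n\ge 3$, so $d=\phi(n)$ is even), and for real parameters $c=(c_1,\dots,c_{d/2})$ set
$$B_c(x,y)=\sum_{k=1}^{d/2}c_k\,\operatorname{Im}\!\big(\sm_k(x)\overline{\sm_k(y)}\big),\qquad x,y\in\mathcal{O}.$$
Each summand is skew-symmetric in $(x,y)$, and since $|\sm_k(\zt_n)|=1$ one has $\sm_k(\zt_n x)\overline{\sm_k(\zt_n y)}=\sm_k(x)\overline{\sm_k(y)}$, whence $B_c(\zt_n x,\zt_n y)=B_c(x,y)$. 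Thus the associated matrix $\Theta_c$ is skew-symmetric and satisfies $C_n^{T}\Theta_c C_n=\Theta_c$ exactly; in particular the mod-$\Z$ invariance condition holds, so $\langle C_n\rangle$ acts canonically on $\A_{\Theta_c}$ for every $c$. (Choosing one embedding per conjugate pair is essential: the two members of a pair contribute opposite imaginary parts and would cancel.) It remains only to choose $c$ making $\Theta_c$ nondegenerate.

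For nondegeneracy I would argue by genericity in $c$. Writing $B_c(x,y)=\ld_c(x\bar y)$ with $\ld_c(z)=\sum_k c_k\operatorname{Im}(\sm_k(z))$, and using that complex conjugation is an automorphism of $\mathcal{O}$ (so $\{x\bar y:y\in\mathcal{O}\}=x\mathcal{O}$), the degeneracy locus is $S_{\Theta_c}=\{x\in\mathcal{O}:\ld_c(x\mathcal{O})\subseteq\Z\}$. Fix $x\ne 0$ and a $\Z$-basis $w_1,\dots,w_d$ of the full lattice $x\mathcal{O}$; then $x\in S_{\Theta_c}$ becomes $A_x\,c\in\Z^{d}$, where $A_x=(\operatorname{Im}\sm_k(w_j))_{j,k}$ is a $d\times(d/2)$ real matrix. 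Since $x\mathcal{O}$ spans $\Q(\zt_n)$ over $\Q$ and hence is not contained in the real subfield $\Q(\zt_n+\zt_n^{-1})$, some entry $\operatorname{Im}\sm_k(w_j)\ne 0$, so $A_x\ne 0$; therefore $\{c\in\R^{d/2}:A_x c\in\Z^{d}\}$ is a countable union of affine subspaces of dimension $<d/2$, of Lebesgue measure zero. Taking the union over the countably many $x\in\mathcal{O}\setminus\{0\}$ keeps a null set, so almost every $c$ gives a nondegenerate $\Theta_c$; any such $c$ produces a simple $d$-torus $\A_{\Theta_c}$ carrying the canonical $\langle C_n\rangle$-action, proving the theorem.

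The main obstacle is precisely this last step. A priori it is not clear that the invariance constraint leaves room for a nondegenerate (hence simple) form, since the most obvious invariant forms are rational and every rational $\Theta$ is degenerate; the resolution is that the invariant family $\{\Theta_c\}$ is genuinely $(d/2)$-parameter, and its generic member is irrational and nondegenerate. Before finalizing I would reconcile the two borrowed facts in the first paragraph with the paper's Definition~\ref{canonical automorphism} and its simplicity criterion, and record the elementary point that $A_x\ne 0$, equivalently $x\mathcal{O}\not\subseteq\Q(\zt_n+\zt_n^{-1})$, which is immediate from $[\Q(\zt_n):\Q]=d>d/2$.
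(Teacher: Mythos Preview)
Your argument is correct but follows a genuinely different route from the paper. The paper's proof (Theorem~\ref{nondegenerate theta}) is an explicit one-parameter construction: for any irrational $\theta$ it sets $\Theta=\theta\sum_{k=0}^{n-1}(C_n^k)^t(C_n^t-C_n)C_n^k$, which is invariant by averaging; if $\Theta x\in\Z^d$ then irrationality forces the integer matrix $\sum_k(C_n^k)^t(C_n^t-C_n)C_n^k$ to annihilate $x$, and a short manipulation rewrites this matrix as $C_n^t\big(\sum_k(C_n^k)^tC_n^k\big)(I_d-C_n^2)$, whose first two factors are invertible (the middle one is positive definite) while $I_d-C_n^2$ has trivial kernel by Proposition~\ref{away 0}, so $x=0$. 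Your approach instead passes to $\Z[\zt_n]$, builds a $(d/2)$-parameter family of exactly invariant skew forms from the imaginary parts of the complex embeddings, and obtains nondegeneracy for Lebesgue-almost-every parameter by a genericity argument. This is more structural and exhibits a large family at once, at the cost of invoking the power basis of $\Z[\zt_n]$, the identity $\sm_k(\bar y)=\overline{\sm_k(y)}$ (which uses that the Galois group is abelian), and a nonconstructive measure-zero union; the paper's argument is entirely elementary linear algebra and yields an explicit $\Theta$.

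One small point to tidy: your opening claim that the canonical action exists whenever $C_n^t\Theta C_n\equiv\Theta\pmod{M_d(\Z)}$ is weaker than the paper's Definition~\ref{canonical automorphism}, which requires $C_n\in G_\Theta$, i.e.\ exact equality $C_n^t\Theta C_n=\Theta$. Since your $B_c$ is exactly invariant this does not affect the proof, but the first paragraph should be aligned with the paper's definition (as you yourself flag at the end).
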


\vskip .5pc
\noindent
From this result and Theorem~\ref{thm 1}, we can say that 
there exist many noncommutative simple higher dimensional 
tori whose crossed products by finite cyclic groups via canonical actions are 
not AF. 

\vskip .5pc
If $p\geq 3$ is prime (thus $d =p-1$), 
we can say more on the skew symmetric $d\times d$ 
matrices $\Theta$, and apply \cite[Theorem 0.3]{DL} 
to figure out exactly when the simple crossed 
product $\A_\Theta\rtimes \Z_p$ is AF:  

\vskip .5pc

\begin{thm}{\rm (Theorem~\ref{action exists} and Corollary~\ref{cor for prime case})}\label{thm 3} 
Let $p\geq3$ be a prime. 
Then  noncommutative $d$-tori $\A_\Theta$ associated with $\Theta$ 
of the form in (\ref{prime theta form})  
admit the canonical action of $\Z_p=\langle C_p \rangle$. Conversely, 
if  $\Z_p=\langle C_p \rangle$ canonically acts on a $d$-torus $\A_\Theta$, 
then $\Theta$ must have the form in  (\ref{prime theta form}).
For the crossed product $\A_\Theta\rtimes_\af \Z_p$ of a simple $\A_\Theta$, 
it is an AF algebra if and only if $p=3$ or $5$. 
\end{thm}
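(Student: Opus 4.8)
The plan is to separate the statement into the structural description of $\Theta$ (coming from Theorem~\ref{action exists}) and the $K$-theoretic dichotomy (Corollary~\ref{cor for prime case}). For the first half I would first make the condition for a canonical action explicit. Since $\A_\Theta$ depends only on $\Theta$ modulo integer skew-symmetric matrices, and the relations (\ref{commutation relation}) are transported by a matrix $A$ through the bilinear form $(m,n)\mapsto\langle m,\Theta n\rangle$, the companion matrix $C_p$ of the $p$th cyclotomic polynomial $\Phi_p$ (which lies in $\SL_d(\Z)$ because $\det C_p=(-1)^{p-1}=1$, and satisfies $C_p^p=I$) induces an automorphism $\af_{C_p}$ of $\A_\Theta$ via (\ref{canonical action}) precisely when $C_p^T\Theta C_p\equiv \Theta \pmod{\M_d(\Z)}$. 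After choosing the phase factors so that $\af_{C_p}^p=\id$, this gives a genuine action of $\Z_p=\langle C_p\rangle$, and both the existence claim and its converse reduce to describing the real skew-symmetric solutions $\Theta$ of this one invariance equation.

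To solve it I would identify $\Z^d$ with the ring of integers $\Z[\zt_p]$ of $\Q(\zt_p)$, under which $C_p$ becomes multiplication by $\zt_p$ (this is exactly what it means for $C_p$ to be the companion matrix of $\Phi_p$). A $C_p$-invariant bilinear form is then $B(x,y)=\mathrm{Tr}_{\Q(\zt_p)/\Q}(\bt\, x\bar y)$ for some $\bt$, where the bar denotes the complex conjugation $\zt_p\mapsto\zt_p^{-1}$; imposing skew-symmetry forces $\bar\bt=-\bt$, so $\bt$ ranges over the purely imaginary part of the field, a space of dimension $(p-1)/2=d/2$. Reading off the entries of $B$ in the standard basis recovers the shape (\ref{prime theta form}), and passing to $\bt\in\Q(\zt_p)\otimes\R$ produces the real solutions. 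This proves the converse (every admissible $\Theta$ has that form) and, combined with Theorem~\ref{nondegenerate theta}, the direct statement: one may choose the free parameters so that the bicharacter is nondegenerate and $\A_\Theta$ is simple, since degeneracy excludes only a countable union of hyperplanes inside this $(d/2)$-dimensional family.

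For the AF dichotomy I would use the reduction $K_*(\A_\Theta\rtimes_\af\Z_p)\cong K_*(C^*(\Z^d\rtimes\Z_p))$ recorded in the introduction. By \cite[Theorem 0.3]{DL} one has $K_1(C^*(\Z^d\rtimes_\af\Z_p))=0$ if and only if $p=3$ or $5$, so for every prime $p\neq 3,5$ the crossed product of a simple $\A_\Theta$ has nonzero $K_1$ and hence is not AF. For $p=3$ (so $d=2$) the action is conjugate to the order-three subgroup action on a rotation algebra, which is AF by \cite{ELPW}; for $p=5$ (so $d=4$) I would invoke that, by the general theory of \cite{ELPW}, these simple crossed products are AH algebras of real rank zero with torsion-free $K$-theory, so that the vanishing $K_1=0$ forces AF through the classification of such algebras.

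The step I expect to be the main obstacle is this last one: upgrading the equality $K_1=0$ to the conclusion \emph{AF} in the genuinely higher-dimensional case $p=5$ is not formal and requires establishing (or carefully quoting) that the crossed product lies in a classifiable class where vanishing of $K_1$, together with torsion-freeness and a dimension-group structure on $K_0$, is sufficient. By contrast, the identification of the invariant forms in the second paragraph is concrete once the cyclotomic dictionary is set up; its only delicate point is the bookkeeping distinguishing exact invariance over $\R$ from invariance modulo $\Z$, which perturbs only the rational part of the parameter space and does not affect the form (\ref{prime theta form}).
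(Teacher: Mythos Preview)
Your cyclotomic-field argument for the structural half is correct and genuinely different from the paper's. The paper (Lemma~\ref{theta lemma} and Theorem~\ref{action exists}) does a direct matrix computation: writing $C_p=A+B$ with $A$ the subdiagonal shift and $B$ carrying the last column $(-1,\dots,-1)^t$, it expands $C_p^t\Theta C_p$ term by term, reads off the Toeplitz condition (\ref{general theta form}) from the upper-left block, and then compares last columns to extract the relations $\theta_{1k}+\theta_{1,d+3-k}=0$ that give (\ref{prime theta form}). Your identification $\Z^d\cong\Z[\zt_p]$ and parametrization of $C_p$-invariant skew forms by $\mathrm{Tr}_{\Q(\zt_p)/\Q}(\bt\,x\bar y)$ with $\bar\bt=-\bt$ is more conceptual: it explains at once why the answer is Toeplitz (the entries depend only on $j-k$ through $\zt_p^{\,j-k}$) and why the solution space has dimension $d/2$, facts that the paper's bare-hands calculation establishes but does not illuminate. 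One correction, though: in this paper the canonical action (Definition~\ref{canonical automorphism}) is defined only for $A\in G_\Theta$, i.e.\ for the \emph{exact} equation $C_p^t\Theta C_p=\Theta$, not for congruence modulo $\TT_d(\Z)$; with exact invariance $\af_A(l_\Theta(x))=l_\Theta(Ax)$ is already multiplicative in $A$ and there are no phase factors to choose. Your trace-form argument solves the exact equation directly, so simply drop the mod-$\Z$ relaxation and the phase discussion; the claim that the relaxation ``does not affect the form (\ref{prime theta form})'' is not literally true for $\Theta$ itself.

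For the AF dichotomy your outline coincides with the paper's Corollary~\ref{cor for prime case}. The step you flag as the obstacle (upgrading $K_1=0$ to AF when $p=5$) is exactly what Proposition~\ref{AT} packages: torsion-freeness of $K_*$ comes from Theorem~\ref{LL12}, tracial rank zero and the UCT from Proposition~\ref{uct} together with \cite{Ph06,Ph11}, and then Phillips' Proposition~\ref{Phi K-groups} yields AF from $K_1=0$. No additional argument beyond these citations is required.
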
  
 
\vskip .5pc 

Since the cases considered above do not include odd-dimensional 
noncommutative tori  
while $3$-dimensional case, for example, is expected to be easily understood and maybe 
much similar to 2-dimensional tori than tori with dimension a lot higher, 
we examine noncommutative simple 3-tori separately 
and obtain the following: 

\vskip 1pc

\begin{thm}{\rm (Theorem~\ref{3-torus})}\label{thm 4} 
The only canonical action by a nontrivial finite cyclic group 
on a simple $3$-dimensional torus is the flip action by $\Z_2$.
\end{thm}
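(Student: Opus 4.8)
The plan is to reduce the statement to a finite check governed by the torsion elements of $\GL_3(\Z)$ and then to isolate a single structural obstruction, coming from the oddness of the dimension, that destroys simplicity for every generator except $-I$. First I would record two preliminary facts. (i) A nontrivial finite cyclic group acting canonically is $\langle A\rangle$ for some $A\in\GL_3(\Z)$ of finite order $n>1$, and by the definition of the canonical action $A$ must satisfy the invariance congruence $A^{t}\Theta A\equiv\Theta\pmod{M_3(\Z)}$; only this congruence is needed, since the relations (\ref{commutation relation}) depend on the entries of $\Theta$ only modulo $\Z$. (ii) $\A_\Theta$ is simple exactly when the associated $\R/\Z$-valued skew form $B(x,y)=\langle x,\Theta y\rangle\bmod\Z$ on $\Z^3$ has trivial radical, i.e. the only $x\in\Z^3$ with $\Theta x\in\Z^3$ is $x=0$. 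I would then pin down the admissible orders: $A$ is diagonalizable with roots-of-unity eigenvalues and has a degree-$3$ characteristic polynomial in $\Z[x]$, hence a product of cyclotomic polynomials $\Phi_k$ with $\sum\phi(k)=3$; since $\phi(1)=\phi(2)=1$ and $\phi(3)=\phi(4)=\phi(6)=2$, the only possibilities are $n\in\{2,3,4,6\}$.

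The heart of the argument uses that $3$ is odd. For every such $A$ with $A\neq\pm I$, at least one of $\pm1$ is a simple eigenvalue of $A$; fix such a $\lambda_0\in\{1,-1\}$, let $\Z v_0$ (with $v_0$ primitive) be the saturated rank-$1$ eigenlattice, and let $P=\Z^3\cap Q$ be the saturated rank-$2$ sublattice in the complementary $A$-invariant plane $Q=\bigoplus_{\mu\neq\lambda_0}E_\mu$. Because $\lambda_0$ is a simple eigenvalue it is not an eigenvalue of $A|_Q$, so $(A-\lambda_0 I)|_Q$ is invertible over $\Q$ and $(A-\lambda_0 I)P$ has finite index $m=|\chi_{A|_Q}(\lambda_0)|$ in $P$, where $\chi_{A|_Q}$ is the characteristic polynomial of $A|_Q$. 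Consider $g=B(v_0,\cdot)$. Invariance of $B$ gives $B(v_0,Aw)=B(A^{-1}v_0,w)=\lambda_0\,g(w)$, hence $g$ annihilates $(A-\lambda_0 I)P\supseteq mP$, so $m\,g=0$ on $P$. Combining this with $B(v_0,v_0)=0$ (skew symmetry) and the finiteness of $e:=[\Z^3:\Z v_0\oplus P]$, a short computation shows that $M v_0\in\operatorname{rad}(B)$ for the nonzero integer $M=em$. Thus $B$ has nonzero radical and $\A_\Theta$ is not simple.

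It then remains to treat the excluded generators. The case $A=I$ is the trivial group. For $A=-I$ the congruence $A^{t}\Theta A\equiv\Theta$ holds for every $\Theta$, the induced automorphism is exactly the flip $u_j\mapsto u_j^{*}$, and simple $3$-tori exist (e.g. whenever $1,\theta_{12},\theta_{13},\theta_{23}$ are linearly independent over $\Q$, as one checks directly from criterion (ii)); hence the flip $\Z_2=\langle -I\rangle$ genuinely acts canonically on simple $3$-tori, and combined with the previous paragraph it is the only such action.

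I expect the main obstacle to be the cohomological, mod-$\Z$ nature of the invariance condition in (i): one cannot simply conclude that the fixed line lies in $\ker\Theta$, as one could with exact invariance. The decisive point is the finite-index/torsion estimate showing that, although $g=B(v_0,\cdot)$ may be nonzero, it is annihilated by the fixed integer $m=|\chi_{A|_Q}(\lambda_0)|$, which forces a nonzero integer multiple of the eigenvector $v_0$ into the radical. A secondary bookkeeping nuisance is the index $e$ arising when $\Z^3$ fails to split as $\Z v_0\oplus P$, but this only inflates the multiple $M$ and does not affect the conclusion.
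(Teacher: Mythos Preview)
Your argument is correct, and it is genuinely different from the paper's. The paper proves Theorem~\ref{3-torus} by invoking Tahara's list of conjugacy classes of torsion elements in $\GL_3(\Z)$ (Table~\ref{table}), reducing via Proposition~\ref{conj} to those fifteen representatives, and then checking for each $A\neq -I_3$ by a direct linear computation that every $\Theta$ with $A^t\Theta A=\Theta$ is degenerate (only the case $A=A_1^2$ is written out; the rest are left to the reader). Your route is classification-free: you use that for any torsion $A\in\GL_3(\Z)$ with $A\neq\pm I_3$ the characteristic polynomial forces a \emph{simple} eigenvalue $\lambda_0\in\{\pm1\}$, and then a uniform invariance argument shows that a nonzero integer multiple of the corresponding primitive eigenvector lies in the radical of the $\R/\Z$-valued skew form. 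This explains transparently why the odd dimension is the obstruction, avoids the case-by-case check, and would adapt to other odd-dimensional situations with the same eigenvalue pattern.

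Two remarks on the interface with the paper. First, in this paper ``canonical action'' is defined (Definition~\ref{canonical automorphism}) only for subgroups of $G_\Theta=\{A:A^t\Theta A=\Theta\}$, i.e.\ with \emph{exact} invariance, not merely the mod-$\Z$ congruence you work under in (i); your hypothesis is weaker, so your proof certainly covers the theorem as stated, but the ``cohomological difficulty'' you anticipate is absent under the paper's convention. Indeed, with exact invariance the argument simplifies: from $\langle v_0,\Theta(A-\lambda_0 I)w\rangle=0$ for all $w$ and $\operatorname{Im}(A-\lambda_0 I)=Q$ one gets $\Theta v_0=0$ outright, so $v_0$ itself (not $Mv_0$) lies in the radical and the indices $e,m$ are unnecessary. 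Second, the paper's approach buys concreteness and a complete tabulation of the possible $\Theta$'s for each representative, while yours buys a conceptual, table-free proof that isolates the structural reason (odd dimension $\Rightarrow$ simple real eigenvalue $\Rightarrow$ fixed direction in the radical).
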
  

\vskip .5pc
 
Going one step further 
we will also  examine the canonical actions by finite groups on 4-dimensional tori. 
We know from Theorem~\ref{thm 3} with $p=5$  that 
there are simple 4-tori admitting canonical actions by $\Z_5=\langle C_5\rangle$ 
and their crossed products are all AF. 
  Among 4-dimensional simple tori, 
  especially we will look at $\A_\Theta$ isomorphic to  
the tensor product $\A_\theta\otimes \A_\theta$ 
of a simple 2-dimensional torus $\A_\theta$ with itself and 
describe explicitly in Proposition~\ref{4-torus} the canonical actions by the groups 
$\Z_n= \langle C_n\rangle$ for $n=5,8,10,12$.

\vskip .5pc

This paper is organized as follows. 
In Section 2, we first review  definitions and important results  which
we need in later chapters and then make it clear why  
the resulting crossed product  under consideration in this paper 
is AF exactly when its $K_1$ group is zero.  
 In Section 3 we explain how the machinery obtained in \cite{ELPW} 
 and the results in \cite{DL, LL12} can be applied to our 
 situation, and then prove Theorem~\ref{thm 1}. 
In Section 4, we prove Theorem~\ref{thm 2} and Theorem~\ref{thm 3}.
 Then applying the  complete list of elements in $\GL_3(\Z)$ of finite order 
 known in \cite{Th}, we prove Theorem~\ref{thm 4}  in Section 5. 
 Finally in Section 6,  we 
 examine in Proposition~\ref{4-torus} some of the $4$-dimensional simple tori  and 
 the canonical actions by the finite groups $\Z_5, \Z_8, \Z_{10}, \Z_{12}$.

\vskip 1pc

{\it Acknowledgement.} The authors have benefited from useful suggestions 
by N. C. Phillips. 

\vskip 1pc

\section{Preliminaries}

\noindent
In this section we recall basic definitions and important facts 
(from \cite{DL, ELPW, LL12, Lin05,  PR, Rf88, Rf90, RS}) 
which shall be used throughout the paper. 

\subsection{Twisted group algebras of discrete groups} 
Let $G$ be a second countable discrete group. A 2-cocycle on $G$ is 
a function $\omega: G\times G\to \mathbb T$ such that 
$\omega(x,y)\omega(xy,z)=\omega(y,z)\omega(x, yz)$ 
and $\omega(x,1)=\omega(1,x)=1$ for $x,y,z\in G$. 
By $\ell^1(G,\omega)$ we denote the twisted convolution $*$-algebra of all 
summable functions on $G$ with   
\begin{align*}
(f*_\omega g)(x)& =\sum_{y\in G} f(y)g(y^{-1}x)\omega(y,y^{-1}x)\\
 f^*(x)&=\overline{\omega(x,x^{-1})f(x^{-1})}.
\end{align*} 
We call a map $v: G\to \mathcal U(\mathcal H)$ of $G$ into 
the unitary group of a Hilbert space $\mathcal H$  
an $\omega$-{\it representation} of $G$ if 
\begin{eqnarray}\label{omega representation} 
v(x)v(y)=\omega(x,y)v(xy)
\end{eqnarray} 
for $x,y\in G$. 
The {\it regular} $\omega$-{\it representation} of $G$ is 
the $\omega$-representation $l_\omega:G\to \mathcal U(\ell^2(G))$ given by 
$$ (\l_\omega(x)\xi)(y)=\omega(x, x^{-1}y)\xi(x^{-1}y)$$
for $\xi\in \ell^2(G)$ and $x,y\in G$.
 Every $\omega$-representation $v: G\to \mathcal U(\mathcal H)$  
  induces a contractive $*$-homomorphism  
 $v:\ell^1(G,\omega)\to B(\mathcal H)$ (also denoted $v$) 
 given by  
 $v(f)=\sum_x f(x)v(x)$ for $f\in \ell^1(G,\omega)$, 
 and every nondegenerate representation of $\ell^1(G,\omega)$ 
 arises in this way.  
The {\it full twisted group algebra} $C^*(G,\omega)$ is then   
defined to be the enveloping $C^*$-algebra of $\ell^1(G,\omega)$ and 
the {\it reduced twisted group algebra} $C^*_r(G,\omega)$ is 
the image of $C^*(G,\omega)$ under the regular representation $l_\omega$.
 If $G$ is amenable, $C^*(G,\omega)$ is equal to 
 $C^*_r(G,\omega)$ by \cite[Theorem 3.11]{PR},   
 and hence by (\ref{omega representation}) 
$$C^*(G,\omega)=\overline{\rm span}\{l_\omega(x):x\in G\}.$$ 
  
\vskip .5pc
\subsection{Noncommutative tori}

A real skew symmetric $d\times d$ matrix $\Theta$ induces a 2-cocycle 
$\omega_\Theta:\Z^d\times \Z^d\to \T$ 
given by 
\begin{equation}\label{omega}
\omega_\Theta(x,y)=\exp (\pi i \langle \Theta x,y\rangle) 
\end{equation} 
for $x,y\in \mathbb Z^d$. 
The twisted group algebra $C^*(\Z^d,\omega_\Theta)$ is called 
a {\it noncommutative $d$-torus} (\cite{Rf90}).
If  $\{e_i\}_{i=1}^d$ is the standard basis of $\mathbb Z^d$, 
then $\omega_\Theta(e_j,e_k) =\exp (\pi i \theta_{kj})$ and    
$C^*(\Z^d,\omega_\Theta)=C^*\{l_\Theta(e_i):i=1,\dots, d\}$, where 
$l_{\Theta}:\mathbb Z^d\to \mathcal U(\ell^2(\mathbb Z^d))$ denotes 
the regular $\omega_\Theta$-representation.
 It is also easy to see that for $x,y\in \Z^d$, 
 $\omega_\Theta(x,x)=1$ and     
$l_\Theta (x)l_\Theta(y)=\omega_\Theta(x,y)  l_\Theta (x+y) 
=\omega_\Theta(x,y)  l_\Theta (y+x) 
=\omega_\Theta(x,y) \overline{\omega_\Theta(y,x)} l_\Theta (y)l_\Theta(x) 
=\omega_\Theta(x,y)^2 l_\Theta (y)l_\Theta(x)$. Thus   
\begin{eqnarray}\label{omega commuting relation}
l_\Theta (e_j)l_\Theta(e_k)
=\omega_\Theta(e_j,e_k)^2 l_\Theta (e_k)l_\Theta(e_j) 
=\exp ({2\pi i \theta_{kj}})l_\Theta (e_k)l_\Theta(e_j)
\end{eqnarray}
follows for $j,k=1,\dots,d$, and 
\begin{equation}\label{expansion}
l_\Theta(y)=\exp\big(\pi i \sum^d_{k=2}
\sum^{k-1}_{j=1}y_k y_j\theta_{jk}\big)
 \,l_\Theta(e_1)^{y_1}\cdots l_\Theta(e_d)^{y_d}
\end{equation}
holds for $y=(y_1,\dots,y_d)\in \Z^d$.
The relation (\ref{omega commuting relation}) shows that 
the generating unitaries $\{l_\Theta (e_j)\}_{j=1}^d$ 
of $C^*(\Z^d,\omega_\Theta)$ satisfy the relation (\ref{commutation relation}).  
In fact  $C^*(\Z^d,\omega_\Theta)$ is characterized 
as the universal $C^*$-algebra generated by $d$ unitaries $\{u_j\}_{j=1}^d$ satisfying 
the relations  (\ref{commutation relation}) (\cite{Rf90}).  
 Also,  $C^*(\Z^d,\omega_\Theta)$ is usually denoted by $\A_\Theta$;  
\begin{eqnarray} \label{nc torus}
\A_\Theta=C^*(\Z^d,\omega_\Theta). 
\end{eqnarray} 
For $\theta\in \mathbb R$,  the rotation algebra  $\A_\theta$ 
is the noncommutative 2-torus $\A_\Theta$ associated to the 
real skew symmetric $2\times 2$ matrix $\Theta=(\theta_{kj})$ with $\theta_{12}=\theta$.
Of course,  $\A_\Theta$ is not necessarily noncommutative as   
the generators commute each other  
if $\theta_{kj}\in \mathbb Z$ for all $k,j=1,\dots, d$. 

\vskip 1pc

\begin{ntt}
As in \cite{RS}, we use the following notation: 
$$\TT_{d} (\R):=\{\,\Theta\in M_d(\R): \, \Theta^t=-\Theta \,\},$$
where $\Theta^t$ denotes the transpose of $\Theta$. 
Similarly, $\TT_{d} (\Z)$ denotes the set of all 
 $d\times d$ skew symmetric matrices with entries from $\Z$.
For a skew symmetric matrix $\Theta\in \TT_d(\R)$, we will consider the group  
$$G_{\Theta}:=\{A\in \GL_d(\Z): \Theta = A^t\Theta A  \,\}.$$
Actually it is the isotropy group of 
$\Theta\in \TT_d(\R)$ under the action of $\GL_d(\Z)$,  
$$(A, \Theta)\mapsto (A^{-1})^t\Theta A^{-1}:\GL_d(\Z)\times \TT_d(\R)\to \TT_d(\R).$$ 
\end{ntt}

\vskip 1pc

We  call $\Theta\in \TT_{d} (\R)$ {\it nondegenerate} 
if whenever  $x\in \mathbb Z^d$ satisfies  $\exp(2\pi i \langle x, \Theta y\rangle)=1$
for all $y\in \mathbb Z^d$, then $x=0$. 
Otherwise $\Theta$ is called {\it degenerate}.  
For the simplicity of the algebra $\A_\Theta$, the following is known. 

\vskip 1pc

\begin{thm}{\rm (\cite[Theorem 1.9]{Ph06}, \cite[Theorem 3.7]{Sl})} 
\label{simple}
Let $\Theta\in \TT_{d} (\R)$. Then the noncommutative $d$-torus 
$\A_\Theta$ is simple if and only if $\Theta$ is nondegenerate.
\end{thm}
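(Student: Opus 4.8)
Since the statement is a simplicity criterion, the natural strategy is to control the closed two‑sided ideals of $\A_\Theta=C^*(\Z^d,\omega_\Theta)$ by means of its canonical dual (gauge) action of $\T^d$ together with the canonical faithful trace. Throughout I would use that $\Z^d$ is amenable, so $\A_\Theta=C^*_r(\Z^d,\omega_\Theta)=\overline{\operatorname{span}}\{l_\Theta(x):x\in\Z^d\}$ acts on $\ell^2(\Z^d)$, and record from the computation preceding (\ref{omega commuting relation}) that $\Ad\big(l_\Theta(x)\big)\big(l_\Theta(y)\big)=\omega_\Theta(x,y)^2\,l_\Theta(y)=\exp\big(2\pi i\langle\Theta x,y\rangle\big)\,l_\Theta(y)$. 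For the easy direction, suppose $\Theta$ is \emph{degenerate}, so there is $x_0\neq 0$ in $\Z^d$ with $\exp(2\pi i\langle x_0,\Theta y\rangle)=1$ for all $y$. Since $\langle x_0,\Theta y\rangle=-\langle\Theta x_0,y\rangle$, the identity above shows $l_\Theta(x_0)$ commutes with every $l_\Theta(y)$, hence is central. Writing the gauge automorphisms as $\beta_t\big(l_\Theta(y)\big)=t^{y}l_\Theta(y)$ with $t^y=\prod_j t_j^{y_j}$, one has $\beta_t\big(l_\Theta(x_0)\big)=t^{x_0}l_\Theta(x_0)\neq l_\Theta(x_0)$ for suitable $t$, so $l_\Theta(x_0)$ is a non‑scalar central unitary; as a unital simple $C^*$-algebra has center $\C\cdot 1$, $\A_\Theta$ is not simple.

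For the hard direction I would assume $\Theta$ \emph{nondegenerate} and promote the inner automorphisms $\Ad(l_\Theta(x))$ to the entire gauge group. Define $s:\Z^d\to\T^d$ by $s(x)=\big(e^{2\pi i(\Theta x)_1},\dots,e^{2\pi i(\Theta x)_d}\big)$, so that the identity above reads exactly $\Ad\big(l_\Theta(x)\big)=\beta_{s(x)}$. The crux is to show that nondegeneracy forces $s(\Z^d)$ to be \emph{dense} in $\T^d$. I would argue by Pontryagin duality: $H:=\overline{s(\Z^d)}$ is a closed subgroup of $\T^d$, and $H=\T^d$ iff its annihilator $H^\perp=\{y\in\Z^d:s(x)^y=1\text{ for all }x\}$ is trivial. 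Since $s(x)^y=\exp(2\pi i\langle\Theta x,y\rangle)$ and $\langle\Theta x,y\rangle=-\langle x,\Theta y\rangle$, membership $y\in H^\perp$ means $\exp(2\pi i\langle x,\Theta y\rangle)=1$ for all $x\in\Z^d$, which is precisely the nondegeneracy condition and yields $y=0$. Hence $H=\T^d$, and a standard $\varepsilon/3$ argument on the generators shows that every $\beta_t$ is a point‑norm limit of the inner automorphisms $\Ad\big(l_\Theta(x_n)\big)$ for any sequence with $s(x_n)\to t$.

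Now let $J\subseteq\A_\Theta$ be a nonzero closed two‑sided ideal. As $J$ is invariant under every inner automorphism and is norm‑closed, the density just established upgrades this to invariance under the full gauge action, $\beta_t(J)=J$ for all $t\in\T^d$. Let $E:\A_\Theta\to\C\cdot 1$ be the canonical conditional expectation $E(a)=\int_{\T^d}\beta_t(a)\,dt$, equivalently $E=\tau(\cdot)\,1$ for the canonical trace $\tau$, given in the regular representation by $\tau(a)=\langle a\delta_0,\delta_0\rangle$. Because each $\beta_t(a)$ lies in $J$ and $J$ is a closed subspace, the norm‑convergent Riemann sums defining $E(a)$ lie in $J$, so $E(J)\subseteq J$. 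Choosing $0\neq a\in J$ and applying $E$ to $a^*a\in J$ gives $\tau(a^*a)\,1\in J$; since $\tau$ is faithful—the standard fact that $\delta_0$ is separating for $C^*_r(\Z^d,\omega_\Theta)$, being cyclic for the commuting right regular representation—we get $\tau(a^*a)>0$, hence $1\in J$ and $J=\A_\Theta$. This proves simplicity, completing the equivalence.

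The main obstacle I anticipate lies entirely in the hard direction, in its two analytic inputs: the density of $s(\Z^d)$, where nondegeneracy must be invoked through the skew‑symmetry identity $\langle x,\Theta y\rangle=-\langle\Theta x,y\rangle$ read in the correct variable, and the faithfulness of $\tau$. The step passing from "invariant under inner automorphisms" to "invariant under $\beta$" is also where closedness of $J$ is essential, so that point‑norm limits of elements of $J$ remain in $J$; granting these, the remainder is formal.
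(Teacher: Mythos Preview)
Your proof is correct and complete. Note, however, that the paper does not actually supply its own proof of this theorem: it is quoted in the preliminaries as a known result, with citations to Phillips and Slawny, and no argument is given in the text. So there is no ``paper's proof'' to compare against in the strict sense.

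That said, your approach is precisely the classical one underlying those references: identify the inner automorphisms $\Ad(l_\Theta(x))$ with gauge automorphisms $\beta_{s(x)}$, use nondegeneracy via Pontryagin duality to get density of $s(\Z^d)$ in $\T^d$, deduce that every closed ideal is gauge-invariant, and then kill the ideal with the faithful conditional expectation onto the scalars. Both analytic inputs you flagged---density of the range of $s$ and faithfulness of $\tau$---are handled correctly; in particular your annihilator computation is right once one observes that $\exp(2\pi i\langle\Theta x,y\rangle)=1$ for all $x$ is equivalent to $\Theta y\in\Z^d$, which by skew-symmetry is the same condition as in the definition of nondegeneracy with the roles of the variables interchanged. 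The passage from inner-invariance to gauge-invariance of a closed ideal via point-norm approximation is also fine. In short, you have reproduced the standard proof that the paper merely cites.
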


\vskip 1pc

\subsection{Canonical action on noncommutative tori $\A_\Theta$}  
 
\noindent 
For a matrix $A\in \GL_d(\Z)$,   
the unitary $\U_A\in U(\ell^2(\Z^d))$  given by
$$ (\U_A \xi)(x)=\xi(A^{-1}x)$$
for $\xi \in \ell^2(\Z^d)$ and $x\in \Z^d$, 
defines an automorphism $\Ad \U_A$ of $B(\ell^2(\Z^d))$. 
Any restrictions of $\Ad \U_A$ to subalgebras of $B(\ell^2(\Z^d))$ 
will also be written as $\Ad \U_A$.

\vskip 1pc

\begin{remark}\label{isomorphisms} 
Consider  the $d$-torus 
$\A_\Theta=\overline{\rm span}\{l_\Theta (x):x\in \Z^d\}\subset B(\ell^2(\Z^d))$ 
associated with  
$\Theta \in \TT_d(\R)$. 
\begin{enumerate} 
\item[(1)] 
If $A\in \GL_d(\Z)$ satisfies $\Theta= {(A^{-1})}^t\Theta A^{-1}$, 
it defines an automorphism $\Ad U_A\in \Aut(\A_\Theta)$ 
because $\Ad U_A(l_\Theta (y))=l_{(A^{-1})^t\Theta A^{-1}} (Ay)\in \A_\Theta$ for 
the generators $\{l_\Theta (y): y\in \Z^d\}$ of $\A_\Theta$. 
In fact,  for  $\xi\in \ell^2(\Z^d)$ and $x,y\in \Z^d$,  
\begin{align*}
\Ad\U_A(l_\Theta(y))(\xi)(x)
           &=(\U_A  l_\Theta(y) \U_A^*)(\xi)(x)\\
           &=(l_\Theta(y) \U_A^*)(\xi)(A^{-1}\!x)\\
           &=\omega_\Theta(y,\!-y\!+\!A^{-1}\!x)(\U_A^*(\xi))(-y\!+\!A^{-1}\!x)\\
           &=\omega_\Theta(y,\!-y\!+\!A^{-1}\!x)\,\xi(\!-Ay\!+\!x)\\
           &=\omega_\Theta(y,A^{-1}\!x)\,\xi(\!-Ay\!+\!x)\\
           &=\omega_{{(A^{-1})}^t \Theta A^{-1}}(Ay,x)\,\xi(\!-Ay\!+\!x)\\
           &=(l_{{(A^{-1})}^t\Theta A^{-1}}(Ay))(\xi)(x). 
\end{align*} 
Thus we are concerned with the group $G_\Theta$.  

\item[(2)] If $\Theta=(\theta_{ij}) \in \TT_2(\R)$ with $\theta:=\theta_{12}\neq 0$, 
then $G_\Theta =\SL_2(\Z)$, which is immediate from the fact that 
$(A^{-1})^t\Theta A^{-1}= \det(A)\Theta$ for $A\in \GL_2(\Z)$.

\item[(3)] More generally, any matrix $A\in \GL_d(\Z)$ satisfying 
$$K_A:=\Theta-(A^{-1})^t \Theta A^{-1}\in \TT_d(\Z)$$ 
defines an automorphism $\tau_{K_A}\circ  \Ad U_A\in \Aut(\A_\Theta)$ such that  
$$(\tau_{K_A}\circ  \Ad U_A)(l_\Theta (y))=l_\Theta(Ay)$$ for $y\in \Z^d$, where 
$\tau_{K_A}(l_{(A^{-1})^t \Theta A^{-1}}(y)):=l_{(A^{-1})^t \Theta A^{-1}+K_A}(y)$.   
The set 
$\{\,A\in \GL_d(\Z)\,:\, \Theta-(A^{-1})^t \Theta A^{-1}\in \TT_d(\Z)\,\}$ 
forms a group. 
\end{enumerate}
\end{remark}

\vskip 1pc 

For each $\Theta\in \TT_d(\R)$, the group $G_\Theta$  
acts on  $\Z^d$ via matrix multiplication 
$$(A, x)\mapsto Ax:G_{\Theta} \times  \Z^d\to\, \Z^d$$ 
which then defines the semidirect product group $\Z^d\rtimes G_{\Theta}$ 
with the group multiplication   
$$(x,A)(y,B)=(x+Ay, AB)$$ 
for $x,y\in \Z^d$ and $A,B\in G_\Theta$.  
 Note that the cocycle $\omega_\Theta$, given in (\ref{omega}), is 
 {\it invariant} under the above action;
 $\omega_\Theta(Ax,Ay)=\omega_\Theta(x,y)$ 
 for $A\in G_\Theta$ and $x,y\in \Z^d$. 
  
The following lemma is a special case of 
\cite[Theorem 4.1]{PR} (see \cite[Lemma 2.1]{ELPW}).

\vskip 1pc 

\begin{lemma}\label{lemma2.1}
Let $\Theta\in \TT_d(\R)$ and $G$ be a subgroup of $G_\Theta$. Then:
\begin{enumerate}
\item[(1)] There is a $2$-cocycle $\widetilde\omega_\Theta$ 
of $\Z^d\rtimes G$ defined by 
\begin{equation}\label{omegatilde}
\widetilde\omega_\Theta((x,A), (y,B))=\omega_\Theta(x,Ay). 
\end{equation}

\item[(2)] There is an action $\af:G  \to \Aut(\A_\Theta)$ 
given by $\af_A(f)(x)=f(A^{-1}x)$ for 
$f\in \ell^1(\Z^d,\omega_\Theta)$ and $A\in G$,  
or equivalently 
$$\af_A(l_\Theta(x))=l_\Theta(Ax)  \  \ (l_\Theta(x)\in \A_\Theta).$$ 
 
\item[(3)] There are isomorphisms 
\begin{align*}
C^*(\Z^d\rtimes G, \widetilde\omega_\Theta) 
&\cong C^*(\Z^d,\omega_\Theta)\rtimes_\af G_,\\
C^*_r(\Z^d\rtimes G, \widetilde\omega_\Theta) 
&\cong C^*(\Z^d,\omega_\Theta)\rtimes_{\af,r} G
\end{align*}
given by 
 $f\mapsto \Phi(f): \ell^1(\Z^d\rtimes G, \widetilde\omega_\Theta)
 \to \ell^1 (G, \ell^1 (\Z^d,\omega_\Theta))$ on the level of $\ell^1$-functions, 
 where $\Phi(f)(A)=f(\cdot \,, A)$  for $A\in G$.
\end{enumerate}
\end{lemma}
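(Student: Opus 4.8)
\emph{Strategy.} This lemma is the specialization to the extension $\Z^d\rtimes G$ (with normal subgroup $\Z^d$ and quotient $G$) of the Packer--Raeburn decomposition of twisted group algebras of group extensions, so the plan is to verify the three assertions essentially by hand. The only structural inputs are that $\omega_\Theta$ is \emph{bimultiplicative} — from (\ref{omega}) we have $\omega_\Theta(x,y)=\exp(\pi i\la\Theta x,y\ra)$ with $\la\Theta x,y\ra$ bilinear, so $\omega_\Theta(x+x',y)=\omega_\Theta(x,y)\omega_\Theta(x',y)$ and likewise in the second variable — and that it is \emph{$G$-invariant}, $\omega_\Theta(Ax,Ay)=\omega_\Theta(x,y)$ for $A\in G\subseteq G_\Theta$, as already noted above.

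\emph{Parts (1) and (2).} For (1) the normalization is immediate from $\omega_\Theta(x,0)=\omega_\Theta(0,y)=1$. For the cocycle identity with $g=(x,A)$, $h=(y,B)$, $k=(z,C)$, I would expand both sides using $(x,A)(y,B)=(x+Ay,AB)$ and strip off matching factors via bimultiplicativity: the left side becomes $\omega_\Theta(x,Ay)\,\omega_\Theta(x,ABz)\,\omega_\Theta(Ay,ABz)$ and the right side $\omega_\Theta(y,Bz)\,\omega_\Theta(x,Ay)\,\omega_\Theta(x,ABz)$, so after cancellation the identity reduces to the single equation $\omega_\Theta(Ay,ABz)=\omega_\Theta(y,Bz)$, which is exactly $G$-invariance applied to the pair $(y,Bz)$. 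For (2) I would observe that $\af_A$ is simply the restriction of $\Ad\U_A$ to $\A_\Theta$: since $A\in G_\Theta$ forces $(A^{-1})^t\Theta A^{-1}=\Theta$, the computation in Remark~\ref{isomorphisms}(1) gives $\Ad\U_A(l_\Theta(x))=l_\Theta(Ax)\in\A_\Theta$, so $\Ad\U_A$ preserves $\A_\Theta$ and restricts to an automorphism $\af_A$; the relation $\af_A\af_B=\af_{AB}$ comes from $\U_A\U_B=\U_{AB}$ and continuity is automatic as $G$ is discrete.

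\emph{Part (3): the $\ell^1$-level isomorphism.} Here I would first record what $\widetilde\omega_\Theta$ carries: its restriction to the fiber $\Z^d\times\{1\}$ equals $\omega_\Theta$, while $\widetilde\omega_\Theta((0,A),(0,B))=1$, so the twist is concentrated on $\Z^d$ and $G$ merely acts. Using $(y,B)^{-1}=(-B^{-1}y,B^{-1})$ one writes the twisted convolution on $\ell^1(\Z^d\rtimes G,\widetilde\omega_\Theta)$ explicitly and checks that under $\Phi(f)(A)=f(\cdot\,,A)$ it matches the crossed-product convolution $(\xi*\eta)(A)=\sum_B\xi(B)*_{\omega_\Theta}\af_B(\eta(B^{-1}A))$ on $\ell^1(G,\ell^1(\Z^d,\omega_\Theta))$; the twisting factor $\widetilde\omega_\Theta((y,B),(y,B)^{-1}(x,A))$ collapses to $\omega_\Theta(y,x-y)$, precisely the factor produced by the inner convolution $*_{\omega_\Theta}$. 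A parallel computation, using $\omega_\Theta(x,-x)=1$, shows $\Phi$ intertwines the two involutions. Since $\Phi$ is visibly an isometric linear bijection at the $\ell^1$ level, it is an isometric $*$-isomorphism $\ell^1(\Z^d\rtimes G,\widetilde\omega_\Theta)\cong\ell^1(G,\ell^1(\Z^d,\omega_\Theta))$.

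\emph{Part (3): the main obstacle.} The genuine difficulty is passing to completions. For the full algebras I would invoke that the enveloping $C^*$-algebra of $\ell^1(G,B)$, for a Banach $*$-algebra $B$, is the full crossed product of the enveloping $C^*$-algebra of $B$ by $G$ — nondegenerate representations of $\ell^1(G,B)$ are integrated covariant representations, which factor through $C^*(\Z^d,\omega_\Theta)=\A_\Theta$ — so functoriality of the enveloping construction upgrades the $\ell^1$-isomorphism to $C^*(\Z^d\rtimes G,\widetilde\omega_\Theta)\cong\A_\Theta\rtimes_\af G$. For the reduced statement one identifies $\ell^2(\Z^d\rtimes G)\cong\ell^2(G,\ell^2(\Z^d))$ and verifies that $\Phi$ carries the regular $\widetilde\omega_\Theta$-representation to the regular representation of the crossed product, so it descends to the reduced algebras. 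This compatibility of the completions and of the regular representations is exactly the content of \cite[Theorem 4.1]{PR}, so in practice I would phrase Part (3) as the verification that our data $(\Z^d\rtimes G,\widetilde\omega_\Theta)$ meet the hypotheses of that theorem and then quote it.
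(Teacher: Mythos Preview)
Your proposal is correct and aligns with the paper's approach: the paper gives no proof at all but simply records that the lemma is a special case of \cite[Theorem~4.1]{PR} (see also \cite[Lemma~2.1]{ELPW}), which is exactly the result you invoke at the end of your argument. Your direct verification of parts (1), (2) and the $\ell^1$-level isomorphism in (3) is accurate and supplies the details the paper omits, but the underlying route---reduce to Packer--Raeburn---is the same.
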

 
\vskip 1pc

\begin{definition} \label{canonical automorphism}
The action $\alpha$ in Lemma~\ref{lemma2.1}(2),
 $$\af_A(l_\Theta (x))=l_\Theta(Ax), \ A\in G,\,  x\in \Z^d,$$ 
is called the {\it canonical action} of $G(\subset G_\Theta)$ on $\A_\Theta$. 
\end{definition} 

\vskip 1pc
 
Note that by (\ref{nc torus}) and Lemma~\ref{lemma2.1}(3), 
we have an isomorphism 
\begin{equation}\label{crossed product isomorphism}
\A_\Theta\rtimes_\af  G \cong C^*(\Z^d\rtimes G, \widetilde\omega_\Theta)
\end{equation} 
for any subgroup $G$ of $G_\Theta$ and its canonical action $\af$.  
 
\vskip 1pc

\begin{example} \label{action} 
For $\Theta=(\theta_{ij}) \in \TT_2(\R)$ with $\theta:=\theta_{12}$, 
the canonical action $\alpha$ of $G_\Theta$($= \SL_2(\Z)$ by 
Remark~\ref{isomorphisms}(2)) coincides with 
the action in (\ref{canonical action});  
if $A=(a_{ij})\in \SL_2(\Z)$, we have  for  $i=1,2$,
\begin{align*}
\alpha_A(l_\Theta(e_i))&=l_\Theta(Ae_i) =l_\Theta(a_{1i}e_1+ a_{2i}e_2)\\
    &=\exp(\pi i \theta a_{1i}a_{2i}) l_\Theta(e_1)^{a_{1i}} l_\Theta(e_2)^{a_{2i}}
    \, (\text{by }(\ref{expansion})).
\end{align*}
\end{example}

\vskip 1pc

\subsection{Companion matrices  of cyclotomic polynomials} 

To find a finite subgroup $G$ of $G_\Theta(\subset \GL_d(\Z))$, 
we have to examine matrices $A\in\GL_d(\Z)$ of finite order. 
For this, we shall use companion matrices of cyclotomic polynomials in Section 4 and 5. 

Consider a monic polynomial 
$p(x)=a_0+a_1x+\cdots+a_{d-1}x^{d-1}+x^d$. 
The {\it companion matrix} $C_{p(x)}$ of $p(x)$ is defined to be the following 
$d\times d$ matrix 
\begin{equation}\label{comp matrix}
C_{p(x)}:=\begin{pmatrix}
{0}&{0}&{0}&\cdots&{0}& {-a_0}\\
{1}&{0}&{0}&\cdots&{0}& {-a_1}\\
{0}&{1}&{0}&\cdots&{0}& {-a_2}\\[-7pt]
\vdots & \vdots& & \ddots& \vdots& \vdots\\[-7pt]
{0}&{0}&0 &\ddots&{0}&{-a_{d-2}}\\
{0}&{0}&{0}&\cdots&{1}& {-a_{d-1}}
\end{pmatrix} 
\end{equation}
which is invertible if $a_0\neq 0$.  
The minimal polynomial of $C_{p(x)}$ is equal to its 
characteristic polynomial $p(x)$.

Recall that for $n\in \N$, the $n$th {\it cyclotomic polynomial} 
$\Phi_n(x)$ is  defined by
\[
\Phi_n(x)=\prod_{\substack{1\leq k\leq n \\ {\rm gcd}(k,n)=1}} (x-\exp(2\pi i \,\frac{k}{n})).
\]
It is a monic polynomial of degree $d:=\phi(n)$ (here, $\phi$
is the Euler's totient function).  
$\Phi_n(x)$ is also known to have integer coefficients and is irreducible over $\Q$.   
The  companion matrix $C_n:=C_{\Phi_n(x)}$ of $\Phi_n(x)$ 
is then a  matrix of order $n$. 
 If  $n\geq 3$,  then $d$ is even and it is easy to see that  $C_n\in \SL_d(\Z)$,
  namely $\det(C_n)=1$.  
Since the minimal polynomial $\Phi_n(x)$ of $C_n$ 
has distinct  roots 
$\{\exp(2\pi i \,\frac{k}{n}):\, 1\leq k\leq n,\  {\rm gcd}(k,n)=1\}$  which 
are the eigenvalues of $C_n$, we see that   
$C_n$ is diagonalizable (in $\mathbb C$). 
Thus there is an invertible  matrix $U$ such that  
\begin{eqnarray} \label{diagonal matrix}
UC_n U^{-1}=\text{diag}(\zeta_1, \dots, \zeta_{\phi(n)}),
\end{eqnarray}
where 
$\zeta_1, \dots, \zeta_{\phi(n)}$ are the distinct primitive $n$-th roots of unity. 

\vskip 1pc 

\begin{remark}\label{semidirect group}
Let $n\in \mathbb N$ and $d:=\phi(n)$. 
Then the  companion matrix  $C_n$  of the cyclotomic polynomial $\Phi_n(x)$ 
generates the finite group $\mathbb Z_n=\langle C_n\rangle:=\{C_n^k\in \GL_d(\Z):\, 0\leq k\leq n-1\}$ 
which acts on the group $\mathbb Z^d$ via 
$$(C_n^k, x)\mapsto C_n^k\, x:\mathbb Z_n\times \mathbb Z^d\to \mathbb Z^d.$$  
This action (also denoted $\af$) is actually the conjugation action of $\mathbb Z_n$  
on $\mathbb Z^d$ in the semidirect product group  
$\Z^d\rtimes_\af  \mathbb Z_n$. 
\end{remark}
 
\vskip 1pc 

\subsection{Classification theorems}
 
\noindent 
Let $\Theta\in \TT_d(\R)$ be nondegenerate. 
Then $\A_\Theta$ is a simple $C^*$-algebra with 
a unique tracial state by \cite[Theorem 1.9]{Ph06} 
and has tracial rank zero by \cite[Theorem 3.5]{Ph06}. 
Thus 
if $\alpha:G\to Aut(\A_\Theta)$ is an action by a finite group which has 
the tracial Rokhlin property (see \cite[Section 5]{ELPW}), the crossed product 
$\A_\Theta \rtimes_\alpha G$ becomes a simple $C^*$-algebra (\cite[Corollary 1.6]{Ph11}) 
with tracial rank zero (\cite[Theorem 2.6]{Ph11}).  
The fact that $\A_\Theta \rtimes_\alpha G$ has a unique tracial state 
follows from \cite[Proposition 5.7]{ELPW}.
 
 The canonical action $\af$ of a finite group $G(\subset G_\Theta)$ 
 on the simple  $\A_\Theta$ is actually known to have the tracial Rokhlin property 
 by \cite[Lemma 5.10 and Theorem 5,5]{ELPW}, and  moreover 
the crossed product $\A_\Theta\rtimes_\af G$ satisfies 
the Universal Coefficient Theorem (this will be shown in Proposition~\ref{uct}). 
 Thus the crossed product $\A_\Theta\rtimes_\af G$ 
becomes classifiable by Huaxin Lin's classification theorem: 

\vskip 1pc

\begin{thm} {\rm (\cite[Theorem 5.2]{Lin05})}\label{Lin05} 
Let $A$ and $B$ be two unital separable simple nuclear $C^*$-algebras with 
tracial topological rank zero which satisfy the Universal Coefficient Theorem. 
Then $A\cong B$ if and only if they have isomorphic Elliott invariants, that is,
$$(K_0(A), K_0(A)_+, [1_A], K_1(A))\cong (K_0(B), K_0(B)_+, [1_B], K_1(B)).$$
\end{thm}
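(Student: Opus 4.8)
The plan is to prove the nontrivial (``if'') direction by Elliott's approximate-intertwining method, reducing the global isomorphism problem to two local statements --- an existence theorem and a uniqueness theorem for $*$-homomorphisms --- and then weaving the two maps together into an isomorphism. The ``only if'' direction is immediate: a $*$-isomorphism $A\to B$ is in particular an order isomorphism $K_0(A)\to K_0(B)$ carrying $[1_A]$ to $[1_B]$ and an isomorphism $K_1(A)\to K_1(B)$, hence induces an isomorphism of the listed invariants.

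For the hard direction, the first step is to exploit tracial topological rank zero concretely: I would record that $A$ and $B$ are \emph{tracially AF}. That is, for every finite $\mathcal F\subset A$ and every $\epsilon>0$ there is a projection $p$ and a finite-dimensional $*$-subalgebra $D\subset pAp$ so that $\|pa-ap\|<\epsilon$ and $\operatorname{dist}(pap,D)<\epsilon$ for each $a\in\mathcal F$, while $1-p$ is small in trace. This furnishes a steady supply of finite-dimensional ``local models'' on which the classification problem collapses to the already-known classification of finite-dimensional and AF algebras (Elliott's theorem).

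The second step is the \textbf{existence theorem}: given the prescribed isomorphism $\gamma$ of Elliott invariants, I would lift it --- this is exactly where the Universal Coefficient Theorem enters, since the UCT makes $KK(A,B)$ computable from $K_*$ --- to a class $\kappa\in KK(A,B)$ inducing $\gamma$ on $K_*$ and compatible with the tracial data, and then realize $\kappa$ by honest, (approximately) trace-preserving $*$-homomorphisms on each finite-dimensional local model. The third step is the \textbf{uniqueness theorem}: two unital $*$-homomorphisms between these algebras that agree on $K_*$, on the tracial simplex, and on the relevant $KL$-class are approximately unitarily equivalent. Granting existence and uniqueness, the fourth step is Elliott's two-sided approximate intertwining: one assembles approximately commuting diagrams of local homomorphisms $A\rightleftarrows B$ whose compositions approximate the identities on exhausting finite sets, and passes to the limit to obtain a genuine $*$-isomorphism realizing $\gamma$.

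The main obstacle will be the uniqueness theorem. The intertwining bookkeeping and the easy direction are formal, and the existence lift is standard once the UCT is invoked; but showing that two invariant-matching maps are approximately unitarily equivalent requires the full force of the tracial approximation. One must cut down by a large projection on which a finite-dimensional model controls both maps, align them there by an inner unitary (using agreement of $K_0$ and of the traces to match the finite-dimensional representations up to unitary equivalence), and then dominate the small leftover corner in trace so that Berg-type homotopy arguments splice the local unitaries into a single approximating unitary. Propagating the $K_1$ and $KL$ data consistently through these cutting-and-pasting steps --- so that no ``rotation'' obstruction survives in the limit --- is the delicate heart of the whole argument.
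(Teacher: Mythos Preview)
The paper does not prove this theorem at all: it is quoted verbatim as \cite[Theorem 5.2]{Lin05} and used as a black box. So there is no ``paper's own proof'' to compare your proposal against; the authors simply import Lin's classification theorem from the literature and apply it (together with Proposition~\ref{Phi K-groups}) to conclude that the crossed products under study are AF exactly when $K_1$ vanishes and $K_0$ is torsion free.

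Your sketch is a reasonable high-level outline of the strategy Lin actually uses --- existence and uniqueness theorems fed into an Elliott approximate intertwining --- but it remains a sketch rather than a proof. The genuine technical content (the precise uniqueness theorem for maps between tracially AF algebras, the control of the $KL$-class and the total $K$-theory $\underline{K}(\,\cdot\,)$, the decomposition results that allow one to realize a $KK$-element by an honest $*$-homomorphism) is each a substantial paper-length argument in its own right, and none of it is reproduced here. In the context of the present paper, the appropriate ``proof'' is simply the citation, which is what the authors give.
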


\vskip 1pc
\noindent
 Since simple unital AF algebras satisfy all the conditions of the above theorem, 
 if the Elliott invariant of $\A_\Theta\rtimes_\af G$, $G\subset G_\Theta$, 
 is isomorphic to that of such an  AF algebra,  
 one can conclude that the crossed product 
 $\A_\Theta\rtimes_\af G$ is an AF algebra, 
 which was successfully done in \cite{ELPW} for $\A_\theta\rtimes_\alpha F$ 
 with all $\theta\in \R\setminus \Q$ and all finite subgroups 
 $F$  of $\SL_2(\Z)$. 
The following proposition by N.C. Phillips also says that to see  
whether the crossed product $\A_\Theta\rtimes_\af G$ is AF, 
we only need to know its $K$-groups: 

\vskip 1pc

\begin{prop}{\rm ({\rm \cite[Proposition 3.7]{Ph06}})}\label{Phi K-groups} 
Let $\A$ be a simple infinite dimensional separable unital nuclear $C^*$-algebra with 
tracial rank zero and which satisfies the Universal Coefficient Theorem. 
Then $\A$ is a simple AH algebra with real rank zero and no dimension growth. 
If $K_*(\A)$ is torsion free, $\A$ is an AT algebra. If, in addition, 
$K_1(\A)=0$, then $\A$ is an AF algebra.
\end{prop}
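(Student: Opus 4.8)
The plan is to deduce all three assertions from Huaxin Lin's classification theorem (Theorem~\ref{Lin05}) together with the known range of the Elliott invariant for the three target classes of algebras. The point is that $\A$ already satisfies every hypothesis of Theorem~\ref{Lin05}: it is unital, separable, simple, nuclear, infinite dimensional, has tracial rank zero, and satisfies the UCT. Hence $\A$ is completely determined, up to isomorphism, by its Elliott invariant $(K_0(\A), K_0(\A)_+, [1_\A], K_1(\A))$, and in each case it suffices to exhibit a model algebra of the desired type with the same invariant; the isomorphism supplied by Theorem~\ref{Lin05} then transports the structural property (AH, AT, or AF) back to $\A$.

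First I would record the order-theoretic consequences of $\mathrm{TR}(\A)=0$: the ordered group $K_0(\A)$ is countable, weakly unperforated, and has the Riesz interpolation property, while $K_1(\A)$ is an arbitrary countable abelian group. These are precisely the conditions appearing in the range-of-invariant theorem for simple unital AH algebras of real rank zero with no dimension growth (Elliott--Gong and subsequent work). I would therefore choose a simple unital AH algebra $B$ of real rank zero and no dimension growth realizing the Elliott invariant of $\A$. Such a $B$ itself has tracial rank zero and lies in the UCT class, so all hypotheses of Theorem~\ref{Lin05} hold for both $\A$ and $B$; since their Elliott invariants agree, $\A\cong B$, and the first assertion follows.

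For the second assertion, assume $K_*(\A)$ is torsion free. A direct limit of circle algebras has torsion-free $K_0$ and $K_1$, and the class of simple real rank zero AT algebras realizes exactly the admissible invariants whose underlying $K$-groups are torsion free (Elliott's classification of real rank zero AT algebras together with its range theorem). Under the torsion-free hypothesis $K_0(\A)$ is a dimension group and $K_1(\A)$ is torsion free, so I would pick a simple real rank zero AT algebra $B$ with the Elliott invariant of $\A$; again $B$ has tracial rank zero and satisfies the UCT, and Theorem~\ref{Lin05} yields $\A\cong B$, so $\A$ is AT. For the third assertion I additionally assume $K_1(\A)=0$. Then $K_0(\A)$ is a countable dimension group with distinguished order unit $[1_\A]$, and by the Effros--Handelman--Shen realization of dimension groups there is a unital AF algebra $B$ with $(K_0(B),K_0(B)_+,[1_B])\cong (K_0(\A),K_0(\A)_+,[1_\A])$ and, automatically, $K_1(B)=0$. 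One more application of Theorem~\ref{Lin05} gives $\A\cong B$, whence $\A$ is AF.

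The main obstacle is not the classification step---that is a direct appeal to Theorem~\ref{Lin05}---but the verification that the Elliott invariant of $\A$ actually lies in the range realized by AH (respectively AT, AF) algebras. This rests on the structural input that $\mathrm{TR}(\A)=0$ forces $K_0(\A)$ to be weakly unperforated with Riesz interpolation, and on the observation that, under the torsion-free hypothesis, weak unperforation upgrades to genuine unperforation: if $ng\geq 0$ then either $ng=0$, forcing $g=0$ by torsion-freeness, or $ng$ is a nonzero positive element, forcing $g\geq 0$ by weak unperforation. Thus $K_0(\A)$ is an honest dimension group in the torsion-free setting. Once these order properties are in hand, the matching of invariants and the transport of structure through Theorem~\ref{Lin05} are routine.
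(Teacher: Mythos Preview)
The paper does not supply its own proof of this proposition: it is quoted verbatim as \cite[Proposition 3.7]{Ph06} and used as a black box. So there is no argument in the present paper to compare against.

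That said, your outline is exactly the standard route (and is essentially how Phillips argues): invoke Lin's classification Theorem~\ref{Lin05}, check that $\mathrm{TR}(\A)=0$ forces $K_0(\A)$ to be a countable weakly unperforated Riesz group, and then appeal to the range-of-invariant theorems for simple AH algebras of real rank zero and no dimension growth (Elliott--Gong), for simple real rank zero AT algebras (Elliott), and for simple AF algebras (Effros--Handelman--Shen together with Elliott) to produce a model $B$ with the same Elliott invariant. Two small points you should make explicit. First, in each of the three cases the model $B$ must itself be \emph{simple} and have tracial rank zero so that Theorem~\ref{Lin05} applies on both sides; you say this for the AH and AT models but only write ``unital AF algebra $B$'' in the AF case. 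Simplicity of $B$ follows because $K_0(\A)$ is a simple ordered group (as $\A$ is simple infinite dimensional with $\mathrm{TR}(\A)=0$), and any AF realization of a simple dimension group is a simple AF algebra. Second, your passage from weak unperforation to genuine unperforation under the torsion-free hypothesis is correct and is precisely what is needed to land in the dimension-group range for AT and AF algebras. With those clarifications the argument is complete.
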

 
\vskip 1pc 

\begin{remark}\label{conclusion} 
We can summarize what the classification results above 
together with Proposition~\ref{uct} imply in our setting as follows: 
If $\Theta$ is a nondegenerate skew symmetric real 
$d\times d$ matrix  and $\af:G\to  Aut (\A_\Theta)$ is the 
canonical action of a finite group $G\subset G_\Theta$,  
the simple crossed product $\A_\Theta\rtimes_\af G$ is an AF algebra 
if and only if 
$K_0( \A_\Theta\rtimes_\af G)$ is torsion free and 
$K_1( \A_\Theta\rtimes_\af G)=0$.
\end{remark}

\vskip 1pc

For the computation of $K_1$-groups of the crossed products 
$\A_\Theta\rtimes_\af G$, 
we shall apply the following theorem.

\vskip 1pc 

\begin{thm}{\rm (\cite[Theorem 0.1]{LL12}, \cite[Theorem 0.3]{DL})}\label{LL12} 
Let  $n,\, d\in \mathbb N$. Consider the extension of groups 
$1\to \Z^d\to \Z^d\rtimes_\af \Z_n\to \Z_n\to 1$  such that 
conjugation action $\af$ of $\Z_n$ on $\Z^d$ is free outside the origin 
$0\in \Z^n$.  Then 
$ K_0(C^*(\Z^d\rtimes_\alpha \Z_n)) \cong \Z^{s_0}$ for some $s_0\in \Z$ and 
$$ K_1(C^*(\Z^d\rtimes_\alpha \Z_n)) \cong \Z^{s_1},$$
where 
$ s_1=\sum_{l\geq 0} \rk_\Z((\Ld^{2l+1}\Z^d)^{\Z_n})$. 
If $n$ is even,  $s_1=0$. 
If $n>2$ is prime and $d =n-1$, then $s_1=\frac{2^{n-1}-(n-1)^2}{2n}$. 
\end{thm}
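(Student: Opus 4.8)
The statement is quoted from \cite{LL12, DL}, so I only sketch how one arrives at it. The plan is to pass from the group $C^*$-algebra to equivariant $K$-homology via the Baum--Connes assembly map, and then to compute the latter with an equivariant Chern character. Since $G:=\Z^d\rtimes_\af \Z_n$ is virtually abelian, hence amenable, the assembly map is an isomorphism and $C^*(G)=C^*_r(G)$; moreover $\R^d$, with $\Z^d$ acting by translations and $\Z_n$ acting linearly through $C_n$, is a cocompact model for the classifying space $\underline{E}G$ for proper actions (the isotropy groups inject into $\Z_n$ and are therefore finite). Thus $K_*(C^*(G))\cong K_*^{G}(\R^d)$, and everything reduces to a topological computation.

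Next I would feed this into L\"uck's equivariant Chern character, which rationally splits $K_*^{G}(\underline{E}G)\otimes\C$ into a sum indexed by the conjugacy classes of finite cyclic subgroups, the contribution of each being the group homology of its centralizer acting on the corresponding fixed-point set. The trivial subgroup contributes $H_*(G\backslash\R^d;\C)=H_*(\T^d;\C)^{\Z_n}=(\Ld^*\Z^d)^{\Z_n}\otimes\C$, which splits by the parity of the homological degree into the even ($K_0$) and odd ($K_1$) parts, producing exactly the summand $(\Ld^{2l+1}\Z^d)^{\Z_n}$ for $K_1$. The key point that makes the answer so clean is that the freeness of $\af$ outside the origin forces every nontrivial element $(x,h)\in G$ of finite order to have fixed-point set $\{v\in\R^d:(1-h)v=x\}$ equal to a single point, because $1-h$ is invertible over $\R$ (no nontrivial power of $C_n$ has eigenvalue $1$). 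Such a zero-dimensional fixed set contributes only to degree-zero homology, hence only to $K_0$. Therefore $K_1\otimes\C\cong(\Ld^{\mathrm{odd}}\Z^d)^{\Z_n}\otimes\C$, which gives the rank formula $s_1=\sum_{l\geq0}\rk_\Z((\Ld^{2l+1}\Z^d)^{\Z_n})$, while $s_0$ absorbs the even invariants together with the finitely many point contributions.

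The even case follows at once from this description. When $n$ is even, $C_n^{n/2}$ is diagonalizable with every eigenvalue equal to $\zeta^{n/2}=-1$ (each primitive $n$-th root $\zeta=\exp(2\pi i k/n)$ has $k$ odd since $\gcd(k,n)=1$), so $C_n^{n/2}=-I\in\Z_n$. Since $-I$ acts on $\Ld^{2l+1}\Z^d$ by the scalar $(-1)^{2l+1}=-1$, we get $(\Ld^{2l+1}\Z^d)^{\Z_n}\subseteq(\Ld^{2l+1}\Z^d)^{-I}=0$, whence $s_1=0$. For the prime case $n=p$ with $d=p-1$, the lattice $\Z^{p-1}$ is the ring of integers $\Z[\zeta_p]$ with $C_p$ acting as multiplication by $\zeta_p$, so over $\C$ the representation $V:=\Z^{p-1}\otimes\C$ has eigenvalue set $\{\zeta^j:1\leq j\leq p-1\}$ for $C_p$, and the same set for every nontrivial power. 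One then extracts the two averages $\dim(\Ld^*V)^{\Z_p}=\tfrac1p\sum_{g}\det(I+g|_V)$ and $\sum_j(-1)^j\dim(\Ld^jV)^{\Z_p}=\tfrac1p\sum_{g}\det(I-g|_V)$. Using $\prod_{j=1}^{p-1}(x-\zeta^j)=1+x+\cdots+x^{p-1}$ one evaluates $\det(I+g|_V)=\prod_{j=1}^{p-1}(1+\zeta^j)=1$ and $\det(I-g|_V)=\prod_{j=1}^{p-1}(1-\zeta^j)=p$ for $g\neq1$, while the $g=1$ terms give $2^{p-1}$ and $0$. Hence $s_0+s_1=\frac{2^{p-1}+p-1}{p}$ and $s_0-s_1=p-1$, and solving yields $s_1=\frac{2^{p-1}-(p-1)^2}{2p}$, as claimed.

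The main obstacle is not any of these rational computations but the integral refinement: proving that $K_0$ and $K_1$ are \emph{free} abelian of the stated ranks (with no torsion) requires controlling the equivariant Atiyah--Hirzebruch / Chern-character spectral sequence integrally, including the vanishing of all higher differentials and the splitting of the resulting filtration. This is exactly the technical heart of \cite{LL12}, and the sharpened bookkeeping needed to pin down the prime case is the refined analysis of \cite{DL}.
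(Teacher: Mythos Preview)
The paper does not prove this statement at all: it is quoted verbatim from \cite{LL12} and \cite{DL} and used as a black box, so there is no ``paper's own proof'' to compare against. Your sketch is a faithful outline of how those references actually proceed (Baum--Connes for the amenable group $\Z^d\rtimes\Z_n$, the model $\underline{E}G=\R^d$, L\"uck's equivariant Chern character, and the observation that freeness outside the origin makes every nontrivial fixed-point set a single point, so only $K_0$ picks up extra contributions), and your derivations of the even case and the closed form for prime $n$ are correct.

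Two small points worth tightening. First, in the even case you argue with $C_n$ specifically, but the theorem as stated allows an arbitrary $\Z_n$-action on $\Z^d$ that is free outside the origin; the fix is immediate: the image of the order-two element is diagonalizable with eigenvalues in $\{\pm1\}$ and has no fixed vector, hence equals $-I$, and then your $(-1)^{2l+1}$ argument applies verbatim. Second, in your prime computation you write ``$s_0+s_1$'' and ``$s_0-s_1$'' where your $s_0$ denotes the even exterior-invariant rank, not the $s_0$ of the theorem (which, as you yourself noted earlier, also absorbs the point contributions); this is only a naming clash and does not affect the computation of $s_1$, but it would read more cleanly if you used a separate symbol for $\sum_{l}\rk_\Z((\Lambda^{2l}\Z^d)^{\Z_n})$.
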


\vskip 1pc

\section{$K$-groups of the simple crossed products $\A_\Theta\rtimes_\af \Z_n$}

\noindent 
In this section we show that the  
$K_1$-group  of the simple crossed product $\A_\Theta\rtimes_\af G$  
is not always zero (see Theorem~\ref{nonzero K_1}).
We begin with the following proposition saying that we can apply 
Lin's classification theorem (Theorem~\ref{Lin05})
or Proposition~\ref{Phi K-groups}
to the simple crossed product  $\A_\Theta \rtimes_\af G$ 
of the canonical action by a finite subgroup $G$ of $G_\Theta$.

\vskip 1pc  
 
\begin{prop}\label{uct}
Let $\Theta$ be a skew symmetric real $d\times d$ matrix and 
$G$ be a finite subgroup of $G_\Theta$. 
If $\alpha:G \rightarrow \Aut(\A_\Theta)$ is the canonical action of $G$ 
on $\A_\Theta$, the crossed product 
$\A_\Theta \rtimes_\alpha G$ satisfies the Universal Coefficient Theorem. 
\end{prop}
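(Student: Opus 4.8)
The plan is to identify the crossed product with a twisted group $C^*$-algebra of an amenable group and then to show that such algebras lie in the bootstrap class, which is equivalent to satisfying the Universal Coefficient Theorem. By the isomorphism (\ref{crossed product isomorphism}), $\A_\Theta\rtimes_\af G\cong C^*(\Z^d\rtimes G,\widetilde\omega_\Theta)$, so it suffices to treat the twisted group algebra of $\Gamma:=\Z^d\rtimes G$. Since $G$ is finite, $\Gamma$ contains $\Z^d$ as a finite-index normal subgroup and is therefore amenable; in particular the full and reduced twisted group algebras coincide by \cite[Theorem 3.11]{PR}, and $\Gamma$ has the Haagerup property.

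First I would remove the cocycle $\widetilde\omega_\Theta$ by the Packer--Raeburn stabilization trick. Applied to the trivial action of $\Gamma$ on $\C$ twisted by $\widetilde\omega_\Theta$, it yields a stable isomorphism
$$C^*(\Gamma,\widetilde\omega_\Theta)\otimes \mathcal{K}(\ell^2(\Gamma))\cong \mathcal{K}(\ell^2(\Gamma))\rtimes_\beta \Gamma$$
for an ordinary (untwisted) action $\beta$ of $\Gamma$ on the compacts. Thus the twist is absorbed into the coefficient algebra at the cost of passing to an honest crossed product of an algebra that is $KK$-equivalent to $\C$ and hence lies in the bootstrap class.

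Next I would invoke the fact that crossed products by amenable groups preserve membership in the bootstrap class. Since $\Gamma$ is amenable, it satisfies the Baum--Connes conjecture with coefficients (Higson--Kasparov), and by Tu's theorem the reduced crossed product of a bootstrap-class algebra by such a group is again in the bootstrap class. As $\mathcal{K}(\ell^2(\Gamma))$ lies in the bootstrap class, so does $\mathcal{K}(\ell^2(\Gamma))\rtimes_\beta \Gamma$, and this algebra therefore satisfies the UCT. Because the UCT is invariant under $KK$-equivalence, in particular under stable isomorphism, it then follows that $C^*(\Gamma,\widetilde\omega_\Theta)$, and hence $\A_\Theta\rtimes_\af G$, satisfies the UCT.

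The two isomorphisms are routine; the main obstacle is pinning down the correct general principle in the third step, namely Tu's result that for a group with the Haagerup property the reduced crossed product of a bootstrap-class algebra stays in the bootstrap class. As an alternative one might try to avoid the twist-removal by deforming $\Theta\rightsquigarrow t\Theta$ (note $G\subseteq G_{t\Theta}$ for every $t$, so the canonical action persists), thereby connecting $\A_\Theta\rtimes_\af G$ to the type I algebra $C(\T^d)\rtimes_\af G$; however, transferring the UCT across the resulting continuous field would require an extra $KK$-equivalence argument, so I expect the stabilization route to be the cleaner one.
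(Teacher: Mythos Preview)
Your argument is correct, but the paper's proof is much shorter and takes a slightly different route. Both proofs start from the identification $\A_\Theta\rtimes_\alpha G\cong C^*(\Z^d\rtimes G,\widetilde\omega_\Theta)$ and from the amenability of $\Gamma=\Z^d\rtimes G$. From there, the paper simply observes that $\Gamma$ sits as a closed subgroup of the almost connected group $\R^d\rtimes G$ and invokes \cite[Corollary~6.2]{ELPW}, which directly gives the UCT for twisted group $C^*$-algebras of amenable closed subgroups of almost connected groups. You instead unwind the mechanism yourself: Packer--Raeburn stabilization to replace the twist by an untwisted crossed product of $\mathcal K$ by $\Gamma$, then Higson--Kasparov plus Tu's theorem to keep the coefficient algebra in the bootstrap class under the crossed product, and finally stable invariance of the UCT. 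Your route is more explicit and works for any countable amenable (indeed a-T-menable) $\Gamma$ without needing an embedding into an almost connected group; the paper's route is a one-line citation once the structural embedding is noted. As a side remark, the deformation $\Theta\rightsquigarrow t\Theta$ you mention at the end is essentially what underlies the homotopy of cocycles in Remark~\ref{K-groups} and in \cite{ELPW}, so that alternative is closer in spirit to what the paper actually uses than your main argument is.
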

\begin{proof}
The 2-cocycle 
$\omega_\Theta$ given in (\ref{omega}) is invariant under the action of $G$ on $\Z^d$. 
By (\ref{crossed product isomorphism}), the crossed product  
$\A_\Theta \rtimes_\alpha G$ is isomorphic to the twisted group algebra 
$C^*(\Z^d\rtimes G,\widetilde{\omega}_\Theta)$. 
Note that $\Z^d\rtimes G$ is amenable and is  
a closed subgroup of $\R^d\rtimes G$ which is almost connected. 
Therefore $\A_\Theta \rtimes_\alpha G$ satisfies the Universal Coefficient Theorem  
(see \cite[Corollary 6.2]{ELPW}). 
\end{proof}
 
\vskip 1pc

\begin{remark} \label{K-groups}
If there is a finite subgroup $G$ of $G_\Theta$ which canonically acts
on the noncommutative simple torus $\A_\Theta$, 
as summarized in Remark~\ref{conclusion}, we need to calculate the $K$-groups 
of $\A_\Theta \rtimes_\af G$ to see whether it is an  AF algebra. 
But the $K$-groups 
$K_*(\A_\Theta \rtimes_\alpha G)$ of the crossed product  
$\A_\Theta \rtimes_\alpha G\cong C^*(\Z^d\rtimes G,\widetilde{\omega}_\Theta)$ 
(see (\ref{crossed product isomorphism})) 
is equal to  the $K$-groups $K_*(C^*(\Z^d\rtimes G))$ of the 
untwisted group algebra by \cite[Theorem 0.3]{ELPW};
$$ K_i(\A_\Theta \rtimes_\alpha G)=K_i(C^*(\Z^d\rtimes G))$$
for $i=0,1$. 
This is because the 2-cocycle $\widetilde{\omega}_\Theta$ is homotopic 
(in the sense of \cite[Theorem 0.3]{ELPW}) to 
the trivial one
via
$$ \Omega:(\Z^d\rtimes G)\times (\Z^d\rtimes G) \rightarrow C([0,1],\T) $$
defined by 
$$ \Omega((x,A),(y,B))(t):=\exp(2\pi i t \langle \Theta x, Ay \rangle ) $$
for $x,y\in \Z^d$, $A,B\in G$ and $t\in [0,1]$.   
\end{remark}

\vskip 1pc 

For the rest of this section, we will consider the cyclic group $\Z_n=\langle C_n\rangle$  
generated by the companion matrix $C_n$ 
of the $n$th cyclotomic polynomial and its conjugation action  $\af$ on $\Z^d$,  
$d=\phi(n)$ (see  Remark~\ref{semidirect group} for the conjugation action). 
Note that we use the same $\af$   for both  the canonical action of $G(\subset G_\Theta)$ on 
$\A_\Theta$ and the conjugation action of $\Z_n$ on $\Z^d$. 

\vskip 1pc 

The formula $K_*(C^*(\Z^d \rtimes_\af  G))$ in Theorem~\ref{LL12} requires 
that the action $\af$ of $\Z_n$ on $\Z^d$ be free outside the origin, 
so we need the following proposition which follows immediately from (\ref{diagonal matrix}).

\vskip 1pc 

\begin{prop}\label{away 0} The conjugation action $\af$ of   
$\mathbb Z_n=\langle C_n\rangle$ on 
$\mathbb Z^d$ in the semidirect product group $\Z^d\rtimes_\af \mathbb Z_n$ 
is {\it free outside the origin}, that is, 
$C_n^k\, x\neq x$ for all $k=1,\dots, n-1$ and nonzero $x\in \mathbb Z^d$. 
\end{prop}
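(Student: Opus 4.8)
The plan is to reduce the assertion $C_n^k x = x$ to a statement about eigenvalues and then exploit the diagonalization (\ref{diagonal matrix}). First I would observe that if $C_n^k x = x$ held for some $1\leq k\leq n-1$ and some nonzero $x\in \Z^d$, then $x$ would be an eigenvector of $C_n^k$ (regarded as a matrix over $\C$) for the eigenvalue $1$. Hence it suffices to show that $1$ is not an eigenvalue of $C_n^k$ for any $k=1,\dots,n-1$.

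To do this I would invoke (\ref{diagonal matrix}): there is an invertible $U$ with $UC_nU^{-1}=\operatorname{diag}(\zeta_1,\dots,\zeta_{\phi(n)})$, where $\zeta_1,\dots,\zeta_{\phi(n)}$ are the distinct primitive $n$-th roots of unity. Consequently $C_n^k$ is diagonalized by the same $U$ and has eigenvalues $\zeta_1^k,\dots,\zeta_{\phi(n)}^k$. Since each $\zeta_j$ is a primitive $n$-th root of unity, its multiplicative order is exactly $n$, so $\zeta_j^k=1$ if and only if $n\mid k$. For $1\leq k\leq n-1$ this never occurs, whence $\zeta_j^k\neq 1$ for every $j$, and in particular $1$ is not among the eigenvalues of $C_n^k$.

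Equivalently, $\det(C_n^k-I)=\prod_{j=1}^{\phi(n)}(\zeta_j^k-1)\neq 0$, so $C_n^k-I$ is invertible over $\C$ and the only solution of $(C_n^k-I)x=0$ in $\C^d\supseteq \Z^d$ is $x=0$. This yields $C_n^k x\neq x$ for all nonzero $x\in \Z^d$ and all $k=1,\dots,n-1$, which is exactly the freeness outside the origin claimed in the proposition.

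There is essentially no obstacle to overcome here: the whole content is the translation of a fixed vector of $C_n^k$ into a $1$-eigenvector, together with the fact—immediate from (\ref{diagonal matrix})—that no power $C_n^k$ with $1\leq k\leq n-1$ admits $1$ as an eigenvalue, precisely because the primitive $n$-th roots of unity have order exactly $n$. The only point worth noting is that we pass from the lattice $\Z^d$ to $\C^d$ in order to use diagonalizability, which is harmless since the invertibility of $C_n^k-I$ over $\C$ already forces its kernel, and hence any integral fixed vector, to be trivial.
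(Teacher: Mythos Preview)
Your argument is correct and is precisely the expansion of what the paper intends: the paper merely states that the proposition ``follows immediately from (\ref{diagonal matrix}),'' and your proof spells out exactly why---the eigenvalues of $C_n^k$ are $\zeta_j^k$, none of which equal $1$ for $1\leq k\leq n-1$ since each $\zeta_j$ has order $n$, so $C_n^k-I$ is invertible.
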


\vskip 1pc

\noindent 
The above proposition, together with 
Proposition~\ref{Phi K-groups}, Remark~\ref{K-groups}, and 
Theorem~\ref{LL12}, gives the following:

\vskip 1pc 

\begin{prop}\label{AT} 
Let  $d=\phi(n)$ and $\af$ be the canonical action of the finite cyclic group 
$\Z_n=\langle C_n \rangle$ on a noncommutative simple $d$-torus $\A_\Theta$.
Then the crossed product $\A_\Theta\rtimes_\af \Z_n$ is an AT algebra, 
and moreover it is an AF algebra 
if and only if $K_1(\Z^d\rtimes_\af \Z_n)=0$;  
in particular if $n$ is even, $\A_\Theta\rtimes_\af \Z_n$ is always an AF algebra.
\end{prop}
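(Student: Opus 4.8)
The plan is to deduce everything from Proposition~\ref{Phi K-groups} applied to $\A:=\A_\Theta\rtimes_\af \Z_n$, so the first task is to check that this crossed product meets all of the hypotheses of that proposition. Since $\A_\Theta$ is a noncommutative simple $d$-torus, $\Theta$ is nondegenerate, and as recalled before Theorem~\ref{Lin05} the algebra $\A_\Theta$ is a simple, infinite dimensional, separable, unital, nuclear $C^*$-algebra with tracial rank zero. The canonical action $\af$ of the finite group $\Z_n=\langle C_n\rangle\subset G_\Theta$ has the tracial Rokhlin property, so $\A_\Theta\rtimes_\af \Z_n$ is again simple with tracial rank zero; it remains separable, unital, nuclear, and infinite dimensional, and by Proposition~\ref{uct} it satisfies the Universal Coefficient Theorem. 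Thus $\A$ falls squarely under the hypotheses of Proposition~\ref{Phi K-groups}.

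The second step is to read off the $K$-groups. By Remark~\ref{K-groups} we have $K_i(\A)\cong K_i(C^*(\Z^d\rtimes_\af \Z_n))$ for $i=0,1$, and by Proposition~\ref{away 0} the conjugation action of $\Z_n=\langle C_n\rangle$ on $\Z^d$ is free outside the origin. This is precisely the freeness hypothesis needed to invoke Theorem~\ref{LL12}, which then yields $K_0(\A)\cong \Z^{s_0}$ and $K_1(\A)\cong \Z^{s_1}$ for suitable integers $s_0,s_1\geq 0$. The crucial observation is that both groups are free abelian, hence torsion free, so the second clause of Proposition~\ref{Phi K-groups} applies and $\A_\Theta\rtimes_\af \Z_n$ is an AT algebra. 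This settles the first assertion.

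For the AF characterization I would argue in both directions. If $K_1(C^*(\Z^d\rtimes_\af \Z_n))=0$, then $K_1(\A)=0$ by Remark~\ref{K-groups}, and since $K_*(\A)$ is already torsion free, the last clause of Proposition~\ref{Phi K-groups} gives that $\A$ is AF. Conversely, any unital AF algebra has vanishing $K_1$, so $K_1(\A)=0$, and the identification of Remark~\ref{K-groups} forces $K_1(C^*(\Z^d\rtimes_\af \Z_n))=0$. Finally, when $n$ is even Theorem~\ref{LL12} gives $s_1=0$, whence $K_1(\A)\cong\Z^{s_1}=0$, and the equivalence just established shows that $\A_\Theta\rtimes_\af \Z_n$ is always an AF algebra.

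The argument is essentially a bookkeeping assembly of the results collected in Section~2, so I do not anticipate a genuine obstacle; the only points requiring care are verifying that every hypothesis of Proposition~\ref{Phi K-groups} (simplicity and tracial rank zero of the crossed product, the UCT, nuclearity, and infinite dimensionality) is in force, and recognizing that the free abelian description of the $K$-groups in Theorem~\ref{LL12}---available only because Proposition~\ref{away 0} guarantees freeness of the action---is exactly what delivers torsion freeness and therefore the AT conclusion.
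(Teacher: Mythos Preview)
Your argument is correct and matches the paper's approach exactly: the paper states the proposition as an immediate consequence of Proposition~\ref{away 0}, Proposition~\ref{Phi K-groups}, Remark~\ref{K-groups}, and Theorem~\ref{LL12}, and you have simply spelled out that assembly in full, including the verification of the hypotheses of Proposition~\ref{Phi K-groups} via Proposition~\ref{uct} and the discussion preceding Theorem~\ref{Lin05}.
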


\vskip 1pc

Now  let $n\geq 3$ be an odd number and $\af$ be the conjugation action 
of $\Z_n=\langle C_n\rangle$ on $\Z^d$. 
We will show that $K_1(\Z^d\rtimes_\af \Z_n)$ is not necessarily zero. 
For each $l\geq 0$,  $\af$  induces an action 
$\Ld^l(\af): \Z_n  \to \Aut(\Ld^l\Z^d)$  of $\Z_n$ 
on the $l$th exterior power $\Ld^l\Z^d$ of $\Z$-module $\Z^d$ as follows:   
\begin{align*}
\Ld^l(\af)(k)(x_1\w \cdots \w x_l)& :=\Ld^l(\af_k)(x_1\w \cdots \w x_l)\\
&=\af_k(x_1)\w\cdots \w \af_k(x_l) 
\end{align*}
for $k\in \Z_n$ and $x_1\w \cdots \w x_l\in \Ld^l\Z^d$. 
For notational convenience, we simply write $\Ld(\af)$ for $\Ld^l(\af)$. 
 To compute $K_1(C^*(\Z^d\rtimes_\af \Z_n))$,
 we need to know the rank  $\rk_\Z(\Ld^l\Z^d)^{\Z_n}$ of the following submodule  
$$(\Ld^l\Z^d)^{\Z_n}:=\{ v \in \Ld^l\Z^d : \Ld(\alpha_k)(v)=v,\ k\in \Z_n \}$$ 
of the fixed points.

\vskip 1pc 

\begin{lemma}\label{l=d}
Let $n\geq 3$ be an odd integer and $d:=\phi(n)$ 
(automatically even).
Then $\Ld^d\Z^d=(\Ld^d\Z^d)^{\Z_n}$. 
\end{lemma}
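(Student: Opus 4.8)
The plan is to exploit that $\Ld^d\Z^d$ is a free $\Z$-module of rank $1$, on which any $A\in\GL_d(\Z)$ acts as multiplication by its determinant $\det(A)$. Concretely, writing $e_1,\dots,e_d$ for the standard basis of $\Z^d$, the element $e_1\w\cdots\w e_d$ generates $\Ld^d\Z^d$, and by the defining alternating multilinear property of the top exterior power,
$$\Ld^d(A)(e_1\w\cdots\w e_d)=Ae_1\w\cdots\w Ae_d=\det(A)\,(e_1\w\cdots\w e_d)$$
for every $A\in\GL_d(\Z)$. Thus $\Ld^d(A)$ is scalar multiplication by $\det(A)$ on the rank-one module $\Ld^d\Z^d$.

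Applying this to the generators of $\Z_n=\langle C_n\rangle$, the induced action on $\Ld^d\Z^d$ is $\Ld(\af_k)=\Ld^d(C_n^k)$, which by the displayed identity is multiplication by $\det(C_n^k)=\det(C_n)^k$. Since $n\geq 3$ forces $\det(C_n)=1$ (as recorded just before (\ref{diagonal matrix})), each power $C_n^k$ acts as the identity on $\Ld^d\Z^d$. Hence every $v\in\Ld^d\Z^d$ satisfies $\Ld(\af_k)(v)=v$ for all $k\in\Z_n$, which gives exactly $\Ld^d\Z^d=(\Ld^d\Z^d)^{\Z_n}$.

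There is essentially no obstacle here: the only nontrivial input beyond the functoriality of the determinant under $\Ld^d$ is the equality $\det(C_n)=1$, which in turn comes from $\det(C_n)=(-1)^{\phi(n)}\Phi_n(0)$ together with $\Phi_n(0)=1$ and $\phi(n)$ even for $n\geq 3$. In other words, the entire content of the lemma is the fact that $C_n$ lies in $\SL_d(\Z)$ rather than merely in $\GL_d(\Z)$, so that the top exterior power is preserved and every top-degree multivector is fixed.
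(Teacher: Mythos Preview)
Your proof is correct and follows essentially the same approach as the paper: both reduce to showing $\Ld(\af_k)(e_1\w\cdots\w e_d)=\det(C_n^k)\,e_1\w\cdots\w e_d$ and invoke $\det(C_n)=1$. Your additional justification of $\det(C_n)=1$ via $(-1)^{\phi(n)}\Phi_n(0)$ is a harmless elaboration of a fact the paper already recorded in Section~2.4.
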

\begin{proof}
It is enough to show that $\Ld(\alpha_k)(e_1\w\cdots\w e_d)=e_1\w\cdots\w e_d$ 
for all $k\in \Z_n$, which is obvious from  
$\Ld(\alpha_k)(e_1\w\cdots\w e_d) =\alpha_k(e_1)\w\cdots\w\alpha_k(e_d) 
=C_n^ke_1\w\cdots\w C_n^ke_d =\det(C_n^k)e_1\w\cdots\w e_d$ and 
the fact that $\det(C_n)=1$.
\end{proof}

\vskip 1pc 
 
For  two subsets $I, J$ of $\Z$,  set 
$$J-I :=\{j-i \in \Z : i\in I, j\in J \}$$ 
and write $I\equiv J\, (\hskip -.5pc \mod n)$ if for each $i\in I$ there exists a 
$j\in J$ with $i-j \in n\Z$ and vice versa. 
As usual, $|I|$ denotes the cardinality of the set $I$.

\vskip 1pc 

\begin{thm}\label{nonzero K_1} 
Let $n\geq 7$ be an odd integer and  $d:=\phi(n)$. 
Consider the extension of groups 
$1\to \Z^d\to \Z^d\rtimes_\af \Z_n\to \Z_n\to 1$ with $\Z_n=\langle C_n\rangle$. 
 If $2d\geq n+5$, then  
$$K_1(C^*(\Z^d\rtimes_\af \Z_n))\neq 0.$$ 
$($If $n\geq 7$ is prime,  $2d \geq n+5$ always holds.$)$
\end{thm}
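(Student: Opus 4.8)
The plan is to use Theorem~\ref{LL12} to convert the claim into the statement that some \emph{odd} exterior power of $\Z^d$ has nontrivial $\Z_n$-invariants, and then to diagonalize $C_n$ so that this becomes a purely combinatorial question about subsets of residues. By Theorem~\ref{LL12} we have $K_1(C^*(\Z^d\rtimes_\af\Z_n))\cong\Z^{s_1}$ with $s_1=\sum_{l\ge0}\rk_\Z(\Ld^{2l+1}\Z^d)^{\Z_n}$, so it is enough to find a single odd $m$ with $(\Ld^m\Z^d)^{\Z_n}\neq 0$. By (\ref{diagonal matrix}) the matrix $C_n$ diagonalizes over $\C$ with eigenvalues $\zeta^k$, where $\zeta=\exp(2\pi i/n)$ and $k$ ranges over $P:=\{k:1\le k\le n-1,\ \gcd(k,n)=1\}$; hence $C_n$ acts on $\Ld^m\C^d$ diagonally with eigenvalues $\zeta^{\,k_{i_1}+\cdots+k_{i_m}}$ indexed by the $m$-subsets of $P$. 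Since forming $\Z_n$-invariants commutes with the base change $\Z\hookrightarrow\C$, I obtain
\[
\rk_\Z(\Ld^m\Z^d)^{\Z_n}=\#\big\{S\subseteq P:\ |S|=m,\ \textstyle\sum_{k\in S}k\equiv0\ (\mathrm{mod}\ n)\big\}.
\]
So the task reduces to exhibiting an odd-cardinality subset of $P$ whose elements sum to $0$ modulo $n$.

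Here lies the one conceptual subtlety. The obvious zero-sum subsets of $P$ all have \emph{even} size: each pair $\{k,n-k\}$ sums to $0$, and since $d=\phi(n)$ is even the whole set $P$ sums to $0$ as well. To force an odd-size zero-sum set I will instead produce a single zero-sum triple $\{a,b,c\}\subseteq P$, which pins down $m=3$ and makes $(\Ld^3\Z^d)^{\Z_n}\neq0$, hence $s_1\ge1$.

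To find such a triple I count the ordered pairs $(a,b)\in P\times P$ with $a\neq b$ for which $c:=(-a-b)\bmod n$ again lies in $P$ and differs from $a$ and $b$; every such pair yields a zero-sum triple. Of the $d(d-1)$ pairs with $a\neq b$, those with $c=a$ (forcing $b\equiv-2a$) or $c=b$ number at most $d$ each, while $c\notin P$ means $p\mid a+b$ for some prime $p\mid n$. For fixed $p\mid n$ the reduction $(\Z/n)^{\times}\to(\Z/p)^{\times}$ is onto with fibers of equal size $d/(p-1)$, so exactly $d^2/(p-1)$ pairs satisfy $a+b\equiv0\ (\mathrm{mod}\ p)$, and a union bound bounds the $c\notin P$ pairs by $\sum_{p\mid n}d^2/(p-1)$. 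Thus the number of good pairs is at least
\[
d(d-1)-2d-\sum_{p\mid n}\frac{d^2}{p-1}=d\Big(d-3-d\sum_{p\mid n}\frac1{p-1}\Big),
\]
which is positive as soon as $d\big(1-\sum_{p\mid n}\tfrac1{p-1}\big)>3$.

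Finally I verify this inequality, and this is exactly where the hypothesis $2d\ge n+5$ does the real work. Since $2\phi(n)\ge n+5>n$ forces $\prod_{p\mid n}(1-1/p)>1/2$, the odd integer $n$ can have at most two distinct prime factors, so $\sum_{p\mid n}\frac1{p-1}\le\frac12+\frac14=\frac34$ and the bracket is $\ge\frac14$. When $\phi(n)\ge14$ this already gives $d\big(1-\sum_{p\mid n}\frac1{p-1}\big)\ge d/4\ge 7/2>3$; the only admissible $n\ge7$ with $\phi(n)\le12$ are the primes $7,11,13$, which satisfy the inequality directly (and are in any case covered by \cite[Theorem 0.3]{DL}). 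I expect the main obstacle to be precisely this last, number-theoretic step: showing that the single inequality $2d\ge n+5$ simultaneously bounds $\sum_{p\mid n}\frac1{p-1}$ away from $1$ and keeps $d=\phi(n)$ large, uniformly across the prime, prime-power, and two-prime cases, so that a zero-sum triple is always forced to exist. The reduction to counting zero-sum triples, by contrast, should be routine.
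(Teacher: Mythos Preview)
Your proof is correct and complete, but it takes a genuinely different route from the paper's own argument. Both approaches reduce, via Theorem~\ref{LL12} (together with Proposition~\ref{away 0}, which you should cite to justify that the conjugation action is free outside the origin), to exhibiting a nonzero $\Z_n$-invariant in some odd exterior power of $\Z^d$, and both end up at $m=3$.

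The paper works entirely over $\Z$: it fixes the explicit partition $I=\{1,2,d\}$, $J=\{3,\dots,d-1\}$ and shows that the averaged wedge $\sum_{k=0}^{n-1}\Ld(\af_k)(e_1\w e_2\w e_d)$ is nonzero by proving that its wedge with the corresponding $J$-average equals $n\,(e_1\w\cdots\w e_d)\neq 0$. The hypothesis $2d\ge n+5$ enters as precisely the condition for $J-I$ to hit every nonzero residue modulo $n$, which is what makes the cross terms vanish. The argument is constructive and exploits the specific shape $C_n^{\,v-1}e_1=e_v$ of the companion matrix.

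You instead diagonalize over $\C$ and convert the question into pure arithmetic: find a $3$-element subset of the primitive residues $P\subset(\Z/n\Z)^{\times}$ with sum $\equiv 0\pmod n$. You then establish existence by a counting/union-bound argument, in which $2d\ge n+5$ is used quite differently---to force $\prod_{p\mid n}(1-1/p)>1/2$, hence at most two odd prime divisors, hence $\sum_{p\mid n}1/(p-1)\le 3/4$, so that the pigeonhole bound $d\big(1-\sum_{p\mid n}\tfrac1{p-1}\big)>3$ goes through (the residual small cases being exactly the primes $7,11,13$, where the inequality reads $p-2>3$). What your method buys is a conceptual explanation of why invariants exist (zero-sum triples in $(\Z/n\Z)^{\times}$) and a framework that could in principle \emph{count} them; what the paper's method buys is an explicit nonzero invariant element and an argument that never leaves the integral exterior algebra.
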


\begin{proof}
Since the conjugation action $\af$ of $\Z_n=\langle C_n\rangle$ on $\Z^d$ is free outside the 
origin by Proposition~\ref{away 0}, 
 it is enough to show that 
$s_1=\sum_{l\geq 0} \rk_\Z(( \Ld^{2l+1}\Z^d)^{\Z_n} )\neq 0$ by Theorem~\ref{LL12}.

Note first that the set $\{1, 2, \dots, d\}$ can be divided into 
two disjoint sets $I=\{i_1, \dots, i_l\}$ and $J=\{j_1, \dots, j_{d-l}\}$ 
such that   $|I|$ (hence $|J|$) is odd and       
\begin{align*}
J- I& \equiv \{1,2,\dots,d-2\}\cup \{n-d+3, n-d+4,\dots,n-1\}\ (\hskip -.8pc\mod n) \\
    & \equiv \{1,2,\dots,n-1\}\ (\hskip -.8pc\mod n), 
\end{align*}
which follows from the condition $2d\geq n+5$ (or $d-2\geq n-d+3$). 
(For example, one can take  $I=\{1,2,d\}$ and $J=\{3,4,\dots,d-1\}$.) 
Then for any  $k,t\in \Z_n=\{0,1,\dots,n-1\}$ with $k\neq t$, 
there exist $i\in I$ and $j\in J$ such that $k+i \equiv t+j\ (\hskip -.6pc\mod n)$. 
So we have 
\begin{align*}
&\sum_{0\leq k\neq t \leq n-1}\Ld(\alpha_k)(e_{i_1}\w\dots\w 
                   e_{i_l})\w \Ld(\alpha_t)(e_{j_1}\w\dots\w e_{j_{d-l}})\\
=&\sum_{0\leq k\neq t \leq n-1} C^k_ne_{i_1}\w\dots\w 
                   C^k_n e_{i_l}\w C^t_n e_{j_1}\w\dots\w C^t_n e_{j_{d-l}} \\
=&\sum_{0\leq k\neq t \leq n-1}C^{k+{i_1}-1}_ne_1\w\dots \w 
                   C^{k+{i_l}-1}_ne_1\w C^{t+{j_1}-1}_ne_1\w\dots\w C^{t+j_{d-l}-1}_ne_1\\
=&\ 0,
\end{align*}
where the second equality  comes from the easy fact that $C_n^{v-1}e_1=e_v$ for $v=1,\dots,d$. 
By Lemma \ref{l=d},  
$\Ld(\af_k)(e_{i_1}\w\cdots\w e_{i_l}\w e_{j_1}\w \cdots\w e_{j_{d-l}})
=e_{i_1}\w\cdots\w e_{i_l}\w e_{j_1}\w \cdots\w e_{j_{d-l}}$ 
for all $k$. 
Thus 
\begin{align*}
0\neq&\ \ n (e_{i_1}\w\dots\w e_{i_l}\w e_{j_1}\w \dots\w e_{j_{d-l}})\\
=&\,\ \sum_{k=0}^{n-1}\Ld(\alpha_k)(e_{i_1}\w\dots\w 
                      e_{i_l}\w e_{j_1}\w \dots\w e_{j_{d-l}})\\
=&\,\ \big(\,\sum_{k=0}^{n-1}\Ld(\alpha_k)(e_{i_1}\w\dots\w e_{i_l}))\w
                      \big(\,\sum_{t=0}^{n-1}\Ld(\alpha_t)( e_{j_1}\w \dots\w e_{j_{d-l}})\big)\\
&\ -\sum_{0\leq k\neq t \leq n-1}\Ld(\alpha_k)(e_{i_1}\w\dots\w 
                      e_{i_l})\w \Ld(\alpha_t)(e_{j_1}\w\dots\w e_{j_{d-l}})\\
=&\,\ \big(\,\sum_{k=0}^{n-1}\Ld(\alpha_k)(e_{i_1}\w\dots\w e_{i_l})\big)\w 
                      \big(\,\sum_{t=0}^{n-1}\Ld(\alpha_t)( e_{j_1}\w \dots\w e_{j_{d-l}})\big),
\end{align*}
and hence
$\displaystyle \sum_{k=0}^{n-1}\Ld(\alpha_k)(e_{i_1}\w\dots\w e_{i_l})\neq 0$ and 
$\displaystyle \sum_{t=0}^{n-1}\Ld(\alpha_t)( e_{j_1}\w \dots\w e_{j_{d-l}})\neq 0$. 
But clearly these two elements belong to $(\Ld^l \Z^d)^{\Z_n}$ and 
$(\Ld^{d-l}\Z^d)^{\Z_n}$ respectively, so that  
$s_1>0$ follows.
\end{proof}

\vskip 1pc 

We close this section with providing a condition equivalent to 
$2d\geq n+5$ used in Theorem~\ref{nonzero K_1}.

\vskip 1pc 

\begin{prop}  Let $n\geq 7$ be an odd number and $d:=\phi(n)$. Then
the condition  $2d\geq n+5$ of Theorem~\ref{nonzero K_1} holds 
if and only if there is a partition 
$\{I, J\}$ of $\{1,\dots,d\}$ such that both $|I|$ and $|J|$ are odd  and 
$$ 
J-I \equiv \{1,2,\dots,n-1\} \ (\hskip -.8pc\mod n).$$
\end{prop}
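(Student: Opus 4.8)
The statement is an equivalence whose two implications are of very different character: the forward one is essentially the construction already used in the proof of Theorem~\ref{nonzero K_1}, while the reverse one is the substantive direction, which I would establish in contrapositive form. Throughout, I would encode a partition $\{I,J\}$ of $\{1,\dots,d\}$ by its indicator string $x_1\cdots x_d\in\{0,1\}^d$, with $x_k=1$ iff $k\in I$, and use the elementary dictionary: for $i\in I$, $j\in J$ the residue of $j-i$ equals $s\in\{1,\dots,d-1\}$ exactly when $x_k=1,\ x_{k+s}=0$ for some $k$ (an \emph{ascending} pair at gap $s$), and equals $n-s$ exactly when $x_k=0,\ x_{k+s}=1$ for some $k$ (a \emph{descending} pair at gap $s$). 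Writing $A_s$ and $B_s$ for the numbers of ascending and descending pairs at gap $s$, the requirement $J-I\equiv\{1,\dots,n-1\}\pmod n$ becomes a precise statement about which $A_s,B_s$ are nonzero.

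For the forward implication, assume $2d\ge n+5$ and take $I=\{1,2,d\}$, $J=\{3,4,\dots,d-1\}$. Since $d=\phi(n)$ is even, $|I|=3$ and $|J|=d-3$ are both odd. The same computation performed inside the proof of Theorem~\ref{nonzero K_1} gives $J-I\equiv\{1,\dots,d-2\}\cup\{n-d+3,\dots,n-1\}\pmod n$, and the hypothesis $2d\ge n+5$ (equivalently $d-2\ge n-d+3$) makes these two blocks cover $\{1,\dots,n-1\}$. This is exactly the required partition.

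For the reverse implication I would argue: if $\{I,J\}$ with $|I|,|J|$ odd satisfies $J-I\equiv\{1,\dots,n-1\}\pmod n$, then $2d\ge n+5$. First, since every difference $j-i$ lies in $\{-(d-1),\dots,d-1\}$, a residue $r\in\{1,\dots,n-d\}$ can be realized only by an ascending pair and a residue $r\in\{d,\dots,n-1\}$ only by a descending pair; hence coverage forces $A_s\ge1$ \emph{and} $B_s\ge1$ for every gap $s\in\{1,\dots,n-d\}$. Applying this at the extreme gap $s=n-d$, where only $d-(n-d)=2d-n$ position-pairs exist, one needs at least two of them, so $2d-n\ge2$, i.e. (as $n$ is odd and $2d$ even) $2d\ge n+3$.

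It remains to exclude the boundary case $2d=n+3$, and this is the step I expect to be the main obstacle, since it is precisely here that the parity hypothesis on $|I|,|J|$ is indispensable: without it the partition $I=\{1,d\}$, $J=\{2,\dots,d-1\}$ already realizes all residues when $2d=n+3$, so no purely metric or counting argument can suffice. The plan is to couple the coverage constraints to parity via the identity $A_s-B_s=P_s-Q_s$, where $P_s=\#\{k\le s:x_k=1\}$ and $Q_s=\#\{k>d-s:x_k=1\}$; note that $P_s=Q_s$ for all $s$ is equivalent to the reflection symmetry $x_k=x_{d+1-k}$, which forces $|I|$ to be \emph{even} because the map $k\mapsto d+1-k$ has no fixed point (as $d$ is even) and pairs the coordinates two-by-two. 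When $2d=n+3$ the top gaps carry very few pairs — gap $n-d=d-3$ has only three position-pairs, and gaps $d-2,d-1$ only two and one — so the conditions $A_s,B_s\ge1$ for $s\le d-3$ together with coverage of the two middle residues $d-2,d-1$ pin down the values of $x$ on the six extreme coordinates $1,2,3,d-2,d-1,d$ up to the small slack permitted by $A_{d-3}-B_{d-3}\in\{-1,0,1\}$. I would then run this finite case analysis while tracking the parity of $\sum_s(A_s+B_s)=|I|\,|J|$, which is odd; the extreme-coordinate configurations consistent with full coverage all force the symmetric pattern $x_k=x_{d+1-k}$ (hence $|I|$ even), contradicting $|I|,|J|$ odd. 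This rules out $2d=n+3$ and yields $2d\ge n+5$, completing the equivalence.
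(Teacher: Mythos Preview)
Your forward direction and your reduction of the converse to the single boundary case $2d=n+3$ are both correct; indeed your $A_s,B_s$ bookkeeping is essentially a relabeling of the paper's $P$, $N'$, $L$, $R$ analysis, and the observation that at gap $s=n-d$ both an ascending and a descending pair must occur is exactly what yields $2d\ge n+3$ in the paper as well.

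The genuine gap is in the case $2d=n+3$. You assert that ``the extreme-coordinate configurations consistent with full coverage all force the symmetric pattern $x_k=x_{d+1-k}$'', but you do not argue this, and the two auxiliary facts you record do not by themselves yield it. The identity $A_s-B_s=P_s-Q_s$ is correct, and so is $\sum_s(A_s+B_s)=|I|\,|J|$, but neither forces symmetry: knowing that $|I|\,|J|$ is odd only tells you that some $A_s+B_s$ is odd, not which one, and the $P_s=Q_s$ characterization of symmetry is a target, not a tool, until you have shown that the coverage constraints actually pin down $P_s$ and $Q_s$. What is really needed is a step-by-step argument that propagates inward from the extreme positions: at gap $d-3$ the three pairs $(1,d-2),(2,d-1),(3,d)$ must contain both an ascending and a descending pair, and the two ``middle'' residues $d-2$, $d-1$ (covered only by gaps $d-2$ and $d-1$) then determine $x_1,x_2,x_{d-1},x_d$; from there one peels off $x_3,x_{d-2}$, then $x_4,x_{d-3}$, and so on, until the unique solution $I=\{1,d\}$ (up to swapping $I,J$) emerges. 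This is exactly what the paper carries out: it first shows $\{1,d\}\subset I$ and $\{2,d-1\}\subset J$, then $\{3,d-2\}\subset J$, and indicates how the same step iterates to give $I=\{1,d\}$, whence $|I|=2$ contradicts the parity hypothesis. Your outline stops short of this inductive core, which is the real content of the proof; the parity of $|I|\,|J|$ plays no role beyond the final observation that $|I|=2$ is even.
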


\begin{proof} 
The direction `only if' was proved in the proof of Theorem~\ref{nonzero K_1}. 
For the converse, let  $\{I, J\}$ be such a partition of $\{1,\dots, d\}$ 
that $|I|$ and $|J|$ are odd and $J-I\equiv \{1,2,\dots, n-1\} \ (\hskip -.5pc\mod n).$  
Assume $1\in I$.
Then the sets
$$P:=(J-I)\cap \N\ \text{ and }\ N:=(J-I)\setminus P.$$ 
contain the positive and negative integers of $J-I$ respectively.    
With $N':=\{n+m:m\in N\}$ of positive integers, 
one has $N'\equiv \, N (\hskip -.7pc\mod n)$. 
Also 
\begin{equation}\label{LR}
P\subset \{1,2,\dots,d-1\}\ \text{ and } N'\subset  \{n+(1-d),n+(2-d),\dots,n -1\}
\end{equation} 
is clear, hence $P\cup N' \subset \{1,2,\dots,n-1\}$ and 
$|P|, |N'|\leq d-1$. 
From 
$$J-I=P\cup N \equiv P\cup N' \, (\hskip -.8pc \mod n) \ \text{ and } 
J-I\equiv \{1,2,\dots,n-1\} \, (\hskip -.8pc\mod n),$$ it follows that 
\begin{equation}\label{PcupN'}
P\cup N'= \{1,2,\dots, n-1\}
\end{equation}
since $P\cup N'$ has only positive integers less than $n$. 
Thus $n-1\leq |P|+|N'|\leq 2(d-1)$, namely $2d\geq n+1$ must hold. 
Since $n$ is odd, we have $2d=n+1$, $2d=n+3$ or $2d\geq n+5$.   

To show $2d\geq n+5$, first suppose  $2d=n+1$. Then $2(d-1)=n-1$ so that
we have $N'=\{n+1-d,n+2-d,\dots,n-1\}$ 
and thus $1-d\in N$. But $1-d$ is not equal to any number 
$j-i$ for $j\in J$ and $i\in I$. Thus $2d\neq n+1$.

Now suppose that $2d=n+3$, that is, $2(d-1)=n+1$ holds. 
Set 
$$L:=\{1,2,\dots,d-1\}\ \text{ and }\ R:=\{n+1-d,n+2-d,\dots,n-1\},$$ 
then  $2d=n+3$ is the case exactly when 
\begin{equation}\label{LcapR}
L\cap R=\{d-2, d-1 \}=\{n+1-d, n+2-d\}.
\end{equation} 
Also (\ref{PcupN'}) implies that 
\begin{equation}\label{0411_1}
 \{1,2,\dots,d-3\}\subset P\ \text{ and } \{n+3-d, n+4-d,\dots,n-1\}\subset N'.
\end{equation} 
Note here that if $m\in L\setminus R=\{1,\dots, d-3\}$ then $m\in J-I$ and that 
if $m\in R\setminus L$ then $m-n\in J-I$. 
Moreover each $m\in L\cap R$ satisfies the following:
\begin{equation}\label{0411_3}
m\notin J-I \Rightarrow m-n\in J-I.
\end{equation}
We claim that 
$$\{1, d \}\subset  I\ \text{ and }\   \{2, d-1\}\subset J.$$  
First observe that for $m:=n+1-d \in L\cap R$,   $m-n=1-d\notin J-I$ since $1\in I$. 
By (\ref{0411_3}), $m=n+1-d\in J-I$ and hence $d-2\in J-I$ by (\ref{LcapR}).
Thus we have
\begin{equation}\label{0411_2}
(d,2)\in J\times I \ \text{ or }\ (d-1,1)\in J\times I.
\end{equation}
Since $3-d\in J-I$ by (\ref{0411_1}), we should have at least one of the following;
$$(3,d)\in J\times I,\  (2,d-1)\in J\times I,\ \text{ or }\  (1,d-2)\in J\times I.$$ 
But  $(2,d-1)\notin J\times I$ and $(1,d-2)\notin J\times I$  
because of (\ref{0411_2})  and  our assumption that $1\in I$. 
Thus  $(3,d)\in J\times I$. 
Since $d\notin J$ we also have $d-1\in J$ by (\ref{0411_2}). 
From (\ref{LcapR}) $d-1=n+2-d$ and thus  
we have $n+2-d=d-1 \notin J-I$ (otherwise, $d\notin J$). 
Then by (\ref{0411_3}), $2-d\in J-I$ which can occur only when 
$(2,d)\in J\times I$ or $(1,d-1)\in J\times I$. 
Since $1\in I$, we obtain $2\in J$. 
This proves the claim. 
 Next we show that 
$$\{1, d \}\subset  I\ \text{ and }\   \{2, 3, d-2, d-1\}\subset J$$  
providing a proof that can be repeated until we reach the 
step $\{1,d\}=I$ and  $\{2, \dots,  d-1\}= J$,  
where we meet a contradiction to the assumption that both 
$I$ and $J$ have odd number of elements. 
By (\ref{0411_1}), with $k=2$,  $n+(k+1)-d\in N'$ and $d-(k+1)\in P$.  
Thus $$(k+1)-d\in J-I \text{ and }\ d-(k+1)\in J-I.$$ 
Note that $(k+1)-d=3-d\in J-I$ can happen only if  
$(k+1,d)=(3,d)\in J\times I$, $(k,d-1)=(2,d-1)\in J\times I$, or 
$(1,d-k)=(1, d-2)\in J\times I$. 
But obviously  $(k+1,d)=(3,d)\in J\times I$ is the only possible case 
and we have $k+1=3\in J$.
Since $d-(k+1)=d-3\in J-I$ we obtain $\{2,3,d-2,d-1\}\in J$. 
(One can repeat the same argument on $k$.)

So far we have shown that the cases 
$2d=n+1$ and $2d=n+3$ should be excluded from $2d\geq n+1$, 
which gives $2d\geq n+5$ as desired. 
\end{proof}

\vskip 1pc

\section{Finite cyclic groups acting on  
higher dimensional noncommutative simple tori $\A_\Theta$}

\noindent 
 We have seen in the previous section 
 that if $\af$ is a canonical action of $\Z_n=\langle C_n\rangle$ on a 
 higher dimensional simple $d$-torus $\A_\Theta$, then we are well informed about
 the crossed products $\A_\Theta\rtimes_\af\Z_n$ 
 since we know how to compute the decisive invariants, namely   
 the $K$-groups $K_*(\A_\Theta\rtimes_\af\Z_n)$ which are  
 equal to $K_*(C^*(\Z^d\rtimes_\af \Z_n))$ (Remark~\ref{K-groups}) 
 and are not necessarily zero by Theorem~\ref{nonzero K_1}.
So it seems that many of the crossed products $\A_\Theta \rtimes_\af \Z_n$ are 
far from being AF.
But what is not yet clear is if there does exist any   
noncommutative simple $d$-torus $\A_\Theta$ that 
actually admits the canonical action by a finite group. 
In this section we show that there do really exist such 
higher dimensional simple tori and then 
determine when their crossed products are not AF.
 To explain more precisely what we do here, 
 we remark that we do not try to find  finite groups $G$  
acting on a  torus $\A_\Theta$ with $\Theta$ preassigned because 
this way of finding examples does not seem to work effectively, rather  
if we begin with a finite (cyclic) group $G$ 
generated by a matrix $C\in \GL_d(\Z)$ of finite order, 
it is more easily addressable to find $d$-tori $\A_\Theta$ on which $G$ acts canonically. 
This is our strategy to investigate examples  in this paper, and so 
we use the following notation for convenience' sake.
 
\vskip 1pc 

\begin{ntt}\label{TT-A}
For $A\in \GL_d(\Z)$, we set: 
$$
\TT_{d,A}(\R) :=\{\Theta \in \TT_d(\R): \Theta = A^t \Theta A \,\}.
$$
$\TT_{d,A}(\R)$ is the set of all skew symmetric matrices $\Theta$ such that 
the noncommutative tori $\A_\Theta$ admit the canonical action of 
the group $\langle A\rangle$ generated by $A$. 
For $A\in \GL_d(\Z)$ and $\Theta\in \TT_d(\R)$, 
it is obvious that $A\in G_\Theta$ if and only if 
$\Theta\in \TT_{d, A}(\R)$.
\end{ntt}

\vskip 1pc 

 Since the companion matrix $C_n$ is of order $n$, we see that  
 any skew symmetric matrix 
 $\displaystyle \Theta:=\sum_{k=0}^{n-1} (C_n^k)^t \Theta' C_n^k $ 
 (for $\Theta'\in \TT_d(\R)$)
 satisfies the condition $C_n^t \Theta C_n=\Theta$ to be an element of 
 $\TT_{d, C_n}(\R)$. 
Conversely,  if $\Theta\in \TT_{d,C_n}(\R)$, that is $C_n^t \Theta C_n=\Theta$,  then 
$\Theta= \frac{1}{n}\sum_{k=0}^{n-1} (C_n^k)^t \Theta C_n^k$ is rather clear. 
Thus we see that 
$$\TT_{d,C_n}(\R)=\Big\{\, \sum_{k=0}^{n-1} (C_n^k)^t \Theta C_n^k:\, \Theta\in  \TT_d(\R)\,\Big\}.$$

Now we show that for each $n\geq 3$, the cyclic group $\Z_n$ acts canonically on 
some noncommutative simple $\phi(n)$-dimensional tori $\A_\Theta$. 
Note from the  following theorem 
that since $\phi(n)=p_1^{r_1-1}(p_1-1)\cdots p_s^{r_s-1}(p_s-1)$ when  
$n=p_1^{r_1}\cdots p_s^{r_s}$ is the prime factorization of $n$, 
the group $\Z_n$  always acts on some noncommutative 
higher dimensional simple tori whenever $n=5$ or $n\geq 7$.

\vskip 1pc

\begin{thm}\label{nondegenerate theta} Let $n\geq 3$ and $d:=\phi(n)$. 
Then there exist simple $d$-dimensional tori $\A_\Theta$ on which 
the group $\Z_n=\langle C_n\rangle$ acts canonically.
\end{thm}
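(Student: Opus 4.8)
The plan is to produce nondegenerate matrices inside the invariant space $\TT_{d,C_n}(\R)$ described just above the theorem statement. By Theorem~\ref{simple} any nondegenerate $\Theta\in\TT_{d,C_n}(\R)$ gives a simple $\A_\Theta$, and by Notation~\ref{TT-A} such a $\Theta$ satisfies $C_n\in G_\Theta$, so that $\Z_n=\langle C_n\rangle\subset G_\Theta$ acts canonically via Definition~\ref{canonical automorphism}. Thus everything reduces to showing that $\TT_{d,C_n}(\R)$ contains a matrix that is nondegenerate in the sense preceding Theorem~\ref{simple}.

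The first step is to show that $\TT_{d,C_n}(\R)$ contains a matrix that is nondegenerate merely as a bilinear form, i.e. with nonzero determinant, and I would do this arithmetically. Since the minimal polynomial of $C_n$ is the irreducible $\Phi_n$, the $\Q[C_n]$-module $\Q^d$ is isomorphic to the field $K=\Q(\zeta_n)$ with $C_n$ acting as multiplication by $\zeta_n$. Writing $c$ for the automorphism $\zeta_n\mapsto\zeta_n^{-1}$ of $K$ and fixing a purely imaginary $\mu$ (that is, $c(\mu)=-\mu$, for instance $\mu=\zeta_n-\zeta_n^{-1}\neq 0$), the form $B_\mu(x,y)=\operatorname{Tr}_{K/\Q}(\mu\, x\, c(y))$ is $C_n$-invariant because $\zeta_n\, c(\zeta_n)=1$, is skew-symmetric because $c(\mu)=-\mu$ and the trace is Galois-invariant, and is nondegenerate because the trace pairing is. Its Gram matrix in a $\Q$-basis is a rational skew matrix $\Theta_*\in\TT_{d,C_n}(\Q)$ with $\det\Theta_*\neq 0$. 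Alternatively, one can read this off the diagonalization (\ref{diagonal matrix}): an invariant form must be supported on the conjugate eigenvalue pairs $\zeta_j\zeta_k=1$, and choosing those entries nonzero produces a nondegenerate invariant real skew matrix.

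The second step upgrades this to nondegeneracy in the noncommutative-torus sense, which forbids rational $\Theta$; so I would perturb inside $W:=\TT_{d,C_n}(\R)=\TT_{d,C_n}(\Q)\otimes_\Q\R$. A matrix $\Theta$ is degenerate precisely when $\Theta x\in\Z^d$ for some nonzero $x\in\Z^d$. For fixed nonzero $x$ the linear map $\Phi_x\colon W\to\R^d$, $\Theta\mapsto\Theta x$, is nonzero, since the form $\Theta_*$ of the first step has $\Theta_* x\neq 0$; hence $\ker\Phi_x$ is a proper subspace and $\Phi_x^{-1}(\Z^d)$ is a countable union of affine translates of $\ker\Phi_x$, a set of Lebesgue measure zero in $W$. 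The degenerate locus $\bigcup_{0\neq x\in\Z^d}\Phi_x^{-1}(\Z^d)$ is then a countable union of measure-zero sets, so its complement in $W$ is nonempty (in fact of full measure). Any $\Theta$ in the complement is nondegenerate, and the proof concludes by the reduction above.

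The crux is the first step: guaranteeing that the invariant space $W$ actually contains a form of nonzero determinant. This is exactly where the hypothesis $n\geq 3$ enters, via the fact that the primitive $n$th roots of unity are non-real and come in conjugate pairs $\{\zeta,\zeta^{-1}\}$ with $\zeta\cdot\zeta^{-1}=1$; were $+1$ an eigenvalue of $C_n$ the invariant forms could be forced to be singular. Once a nondegenerate invariant form is in hand, the measure-theoretic genericity argument is routine.
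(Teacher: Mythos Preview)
Your argument is correct, but it takes a different route from the paper's. Both proofs reduce to producing an invertible matrix in $\TT_{d,C_n}(\R)$; the difference lies in how this is done. The paper writes down the explicit integer matrix $M=\sum_{k=0}^{n-1}(C_n^k)^t(C_n^t-C_n)C_n^k$ and checks by a short telescoping computation that $M=C_n^t\big(\sum_k (C_n^k)^tC_n^k\big)(I_d-C_n^2)$, which is invertible because $\sum_k (C_n^k)^tC_n^k$ is positive definite and $I_d-C_n^2$ is invertible by Proposition~\ref{away 0}. It then sets $\Theta=\theta M$ for any irrational $\theta$ and observes directly that $\Theta x\in\Z^d$ forces $Mx=0$, hence $x=0$. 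Your Step~1 replaces this explicit construction by the trace form $\operatorname{Tr}_{K/\Q}(\mu\,x\,c(y))$ on $K=\Q(\zeta_n)$, which is a clean structural explanation of why an invertible invariant skew form exists. Your Step~2, the measure-theoretic genericity argument, is correct but heavier than necessary: once your Step~1 hands you a rational invertible $\Theta_*$, the paper's one-line trick---take $\theta\Theta_*$ with $\theta$ irrational---already finishes the job without invoking Lebesgue measure. What your approach buys is the stronger conclusion that nondegenerate $\Theta$ are generic (full measure) in $\TT_{d,C_n}(\R)$; what the paper's approach buys is an entirely explicit one-parameter family and no dependence on facts about cyclotomic fields beyond Proposition~\ref{away 0}.
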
 
\begin{proof} 
We show that $\TT_{d,C_n}(\R)$ contains nondegenerate matrices. 
Let $\theta$ be an irrational number. 
Then
$\displaystyle \Theta:=\theta \sum_{k=0}^{n-1} (C_n^k)^t  (C_n^t-C_n) C_n^k\ $ 
is a skew symmetric matrix in $\TT_{d,C_n}(\R)$. 
To show that $\Theta$ is nondegenerate, suppose 
$ \Theta x \in \Z^d$ for some $x\in \Z^d$. 
Since the entries of $C_n$ are integers and $\theta$ is irrational, 
we must have 
$\big(\sum_{k} (C_n^k)^t  (C_n^t-C_n) C_n^k\,\big)x =0$ in $\Z^d$. 
Then  
\begin{align*}
0 &=\ \Big(\sum_{k=0}^{n-1} (C_n^k)^t  (C_n^t-C_n) C_n^k  \Big)\, x\\
  &= \Big(C_n^t \sum_{k=0}^{n-1} (C_n^k)^t C_n^k- (\,\sum_{k=0}^{n-1} (C_n^k)^t C_n^k\,) C_n \Big)\, x \\
  &=\, \Big(C_n^t \sum_{k=0}^{n-1} (C_n^k)^t C_n^k- C_n^t(\,\sum_{k=0}^{n-1} (C_n^k)^t C_n^k\,) C_n^2 \Big)\, x\\
  &=\ C_n^t\sum_{k=0}^{n-1}(C_n^k)^t C_n^k \,(I_d-C_n^2)\, x,
\end{align*}
 where $I_d$ is the $d\times d$ identity matrix.
 Thus $(I_d-C_n^2)x$ must be zero 
because the matrix $C_n^t\sum_{k}(C_n^k)^t C_n^k $ is  invertible. 
But, as we have seen in Proposition~\ref{away 0}, 
$x\neq C_n^2 x$  for nonzero $x\in \Z^d$, and we conclude that
$\Theta$ is nondegenerate.
\end{proof}

\vskip 1pc 

In case that $n(\geq 3)$ is prime, we especially can find all the 
skew symmetric matrices $\Theta\in \TT_{n-1}(\R)$ such that 
the noncommutaive tori $\A_\Theta$ 
associated with $\Theta$ admit the canonical action of the group
$\Z_n=\langle C_n\rangle$.
 We begin with finding the general form of $\Theta$ for which 
the cyclic group generated by a companion matrix $C$ of 
finite order can possibly act on $\A_\Theta$.

\vskip 1pc 

\begin{lemma}\label{theta lemma}  
Let $\Theta\in \TT_d(\R)$ be a nonzero skew symmetric matrix and let 
$C \in \GL_d(\Z)$ be the companion matrix of 
a monic polynomial with degree $d(\geq 3)$. 
Assume that $C$ is of order $n$ and  
the noncommutative  $d$-torus $\A_\Theta$  
admits the canonical action by $\Z_n=\langle C \rangle$, then 
$\Theta$ has the following form:  
\begin{equation}\label{general theta form}
\Theta =\begin{pmatrix}
0 &\theta_0 &\theta_1 &\theta_2 &\cdots &\theta_{d-3} & \theta_{d-2}\\[4pt]
& 0 &\theta_0 &\theta_1 &\theta_2 &\cdots &\theta_{d-3}\\
& & 0 &\theta_0 &\theta_1 &\ddots &\vdots\\ 
& & &0 &\theta_0 &\ddots &\theta_2\\
& & & &0 &\ddots &\theta_1\\
& & & & &\ddots &\theta_0 \\[4pt]
& & & & & &0
\end{pmatrix}  
\end{equation}
for $\theta_i\in \R$, $i=0,\dots, d-2$.
\end{lemma}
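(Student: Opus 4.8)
The plan is to turn the hypothesis into a single matrix identity and then read off the entries of $\Theta$ directly. By Notation~\ref{TT-A}, saying that $\A_\Theta$ admits the canonical action of $\Z_n=\langle C\rangle$ is the same as saying $C\in G_\Theta$, i.e.\ that $\Theta=C^t\Theta C$. So the whole lemma is really about decoding this one equation using the very rigid shape of the companion matrix $C$. I would not touch the crossed product or any $K$-theory here; everything reduces to linear algebra over $\R$.

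The key device is to rephrase $\Theta=C^t\Theta C$ as invariance of a bilinear form. Define $B(x,y):=x^t\Theta y$ on $\R^d$, so that $B(e_i,e_j)=\theta_{ij}$ recovers the entries of $\Theta$. Then for all $x,y$,
\[
B(Cx,Cy)=(Cx)^t\Theta(Cy)=x^t\big(C^t\Theta C\big)y=x^t\Theta y=B(x,y),
\]
so $B$ is $C$-invariant. This is the reformulation that makes the companion structure usable.

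Next I would exploit the fact that the first $d-1$ columns of a companion matrix are simple shifts: $Ce_j=e_{j+1}$ for $1\le j\le d-1$ (equivalently $C^{v-1}e_1=e_v$, exactly as already observed in the proof of Theorem~\ref{nonzero K_1}). Invariance of $B$ then yields, for all $1\le i,j\le d-1$,
\[
\theta_{i+1,j+1}=B(e_{i+1},e_{j+1})=B(Ce_i,Ce_j)=B(e_i,e_j)=\theta_{ij},
\]
which says precisely that $\theta_{ij}$ is constant along each diagonal $j-i=c$, i.e.\ $\Theta$ is Toeplitz. The only point needing a line of bookkeeping is that this shift relation connects \emph{every} entry on a fixed diagonal: starting from the topmost entry $(1,1+c)$ one slides down to $(d-c,d)$, every intermediate step staying inside the matrix, so the common value propagates all the way into the last row and last column as well.

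Finally I would combine the Toeplitz property with skew-symmetry $\theta_{ij}=-\theta_{ji}$: the main diagonal vanishes, the strict lower triangle is determined as the negative of the upper one, and the surviving free parameters are exactly $\theta_0:=\theta_{12},\,\theta_1:=\theta_{13},\dots,\theta_{d-2}:=\theta_{1d}$, giving the stated form in (\ref{general theta form}). I do not expect a genuine obstacle: the argument is essentially a one-equation computation, and the only care required is the diagonal-connectivity bookkeeping above. It is perhaps worth flagging explicitly that this argument uses only the shift columns of $C$ and neither the last column (the coefficients $-a_i$) nor the order $n$; those would impose further linear relations among the $\theta_i$, which is what later trims the parameter space in the prime case, but they are irrelevant for obtaining the general Toeplitz form claimed here.
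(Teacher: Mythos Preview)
Your proof is correct and follows essentially the same approach as the paper: both arguments reduce to the identity $\theta_{i+1,j+1}=\theta_{ij}$ for $1\le i,j\le d-1$, which comes from the shift columns $Ce_j=e_{j+1}$ of the companion matrix. The paper obtains this by decomposing $C=A+B$ (shift plus last column) and explicitly expanding $C^t\Theta C$ into four blocks, then reading off the upper-left $(d-1)\times(d-1)$ block; your bilinear-form phrasing $B(Ce_i,Ce_j)=B(e_i,e_j)$ packages the same computation more cleanly and makes transparent your closing remark that neither the last column of $C$ nor its order $n$ is needed for the Toeplitz conclusion.
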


\begin{proof} 
Recall that  $\Z_n=\langle C \rangle$ canonically acts on 
$\A_\Theta$ if and only if $C \in G_\Theta$, where 
$G_\Theta=\{A\in \GL_d(\Z): A^t\Theta A =\Theta\}$.
Let $\Theta_i$ and $\Theta^j$ be the $i$-th row and $j$-th column of 
$\Theta=(\theta_{ij})$, respectively for $i,j=1, \dots, d$. 
Let $C$ be the companion matrix of 
a monic polynomial $a_1+a_2x+\cdots a_{d}x^{d-1} +x^d$ of degree $d$ 
($a_1\neq 0$ because $C$ is invertible). 
Then we can write $C$ (see (\ref{comp matrix}) as the sum 
$C=A+B$ of two matrices $A$ and $B$, where 
$$
A=\begin{pmatrix} 
0 & 0 & 0 & \cdots & 0 \\
1 & 0 & 0 & \cdots & 0 \\
0 & 1 & 0 & \cdots & 0  \\ 
\vdots &  & \ddots & \ddots & \vdots \\
0 & 0 & \cdots & 1 & 0  
\end{pmatrix}\   \text{ and }\ 
B=\begin{pmatrix} 
0 & 0 & \cdots & 0 & a_1\\
0 & 0 & \cdots & 0 & a_2\\
0 & 0 & \cdots & 0 & a_3\\ 
\vdots & \vdots & \ddots & \vdots & \vdots\\
0 & 0 & \cdots & 0 & a_d\\
\end{pmatrix}.
$$ 
Then a computation, with ${\bf a}=(a_1, \dots, a_d)^t$, shows that 
\begin{align*}
C^t\Theta C 
& \ = \ A^t\Theta A + A^t\Theta B  + B^t \Theta A +B^t\Theta B\\
& \ = \  \begin{pmatrix} 
 \theta_{22} & \cdots & \theta_{2d} & 0 \\
 \vdots & \ddots  &\vdots & 0\\
 \theta_{d2} & \cdots & \theta_{d,d} & 0 \\
 0 &  \cdots & 0 & 0  
\end{pmatrix}
+
\begin{pmatrix} 
0 & \cdots & 0 &\Theta_2  \,{\bf a} \\
\vdots & &\vdots &\vdots \\
0 &  \cdots & 0 & \Theta_d \, {\bf a} \\
0 &  \cdots & 0 & 0  
\end{pmatrix}\\
& \ + 
\begin{pmatrix} 
0 & \cdots & 0 & 0 \\
\vdots & &\vdots &\vdots \\
0 &  \cdots & 0 & 0 \\
(\Theta^2)^t\, {\bf a}  &  \cdots & (\Theta^d)^t\, {\bf a} & 0  
\end{pmatrix}
+
\begin{pmatrix} 
0 & \cdots & 0 & 0 \\
\vdots & &\vdots &\vdots \\
0 &  \cdots & 0 & 0 \\
0 &  \cdots & 0 &   {\bf a}^t\Theta\, {\bf a}
\end{pmatrix}.
\end{align*}
The assertion then follows 
from the fact that $C^t\Theta  C$ is equal to $\Theta$.
\end{proof}

\vskip 1pc 

For a prime $p\geq 3$,  consider the 
skew symmetric  $(p-1)\times (p-1)$ matrices $\Theta$ of the following form: 
\begin{equation}\label{prime theta form}
\Theta =\begin{pmatrix}
0 &\theta_0 &\theta_1 &\theta_2 &\cdots &-\theta_{2} & -\theta_{1}\\[3pt]
& 0 &\theta_0 &\theta_1 &\theta_2 &\cdots &-\theta_{2}\\
& & 0 &\theta_0 &\theta_1 &\ddots &\vdots\\ 
& & & 0 &\theta_0 &\ddots &\theta_2\\
& & & &0 &\ddots &\theta_1\\
& & & & &\ddots &\theta_0 \\[2pt]
& & & & & &0
\end{pmatrix}.  
\end{equation}

\vskip 1pc 

\begin{thm}\label{action exists} 
Let $p\geq 3$ be a prime number with $d=p-1$. Then we have the following:
\begin{enumerate}
\item[(1)] If a $d$-torus $\A_\Theta$ admits a canonical action of 
$\Z_p=\langle C_p\rangle$, namely $C_p^t\Theta C_p=\Theta$, then 
$\Theta$ must be of the form in (\ref{prime theta form}). 
\item[(2)] If $\Theta$ is a skew symmetric 
matrix of form in (\ref{prime theta form}), 
the group $\Z_p=\langle C_p\rangle$ canonically acts on $\A_\Theta$.
\item[(3)] If $\Theta$ is a skew symmetric 
matrix of form in (\ref{prime theta form}) such that 
the real numbers $1, \theta_0, \theta_1, \dots, \theta_{(p-3)/2}$ are 
independent over $\Z$, then the $d$-torus $\A_\Theta$  is simple. 
\end{enumerate}
\end{thm}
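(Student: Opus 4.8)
The plan is to prove the three parts in order, since part (1) essentially determines the shape of $\Theta$ and parts (2), (3) then work within that shape. For part (1), I would apply Lemma~\ref{theta lemma} to the companion matrix $C=C_p$. The lemma already forces $\Theta$ to have the banded (Toeplitz-like) form in (\ref{general theta form}), with free parameters $\theta_0,\dots,\theta_{d-2}$ along the superdiagonals. What remains is to extract the extra symmetry relations $\theta_{d-2}=-\theta_1$, $\theta_{d-3}=-\theta_2$, and generally $\theta_{d-1-j}=-\theta_{j}$, which cut the form down to (\ref{prime theta form}). The key observation is that for a \emph{prime} $p$, the cyclotomic polynomial is $\Phi_p(x)=1+x+\cdots+x^{p-1}$, so every coefficient $a_i$ of the monic polynomial equals $1$; that is, ${\bf a}=(1,1,\dots,1)^t$. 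I would feed this into the residual constraints coming from $C_p^t\Theta C_p=\Theta$ that were not already exhausted by Lemma~\ref{theta lemma}, namely the equations $\Theta_i\,{\bf a}=0$ (from the second block in the lemma's computation) together with the skew symmetry $\theta_{ij}=-\theta_{ji}$. Writing $\Theta_i\,{\bf a}=\sum_{j}\theta_{ij}=0$ with all $a_j=1$ gives, for each row, that the sum of the entries vanishes; combined with the banded structure this yields the anti-palindromic relations among the $\theta_k$ that produce exactly (\ref{prime theta form}).

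For part (2), I would verify directly that a matrix $\Theta$ of the form (\ref{prime theta form}) satisfies $C_p^t\Theta C_p=\Theta$, hence $C_p\in G_\Theta$, and then invoke Lemma~\ref{lemma2.1}(2) (or Notation~\ref{TT-A}, where membership $A\in G_\Theta$ is equivalent to $\Theta\in\TT_{d,A}(\R)$) to conclude that $\Z_p=\langle C_p\rangle$ acts canonically on $\A_\Theta$. Concretely this means checking that the anti-palindromic relations are precisely the ones that make the four-term expansion $A^t\Theta A+A^t\Theta B+B^t\Theta A+B^t\Theta B$ from the proof of Lemma~\ref{theta lemma} collapse back to $\Theta$; since part (1) was an ``if and only if'' derivation, this is really the reverse reading of the same computation, using ${\bf a}=(1,\dots,1)^t$ again.

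For part (3), I must show that $\Theta$ is \emph{nondegenerate}, because then simplicity of $\A_\Theta$ follows from Theorem~\ref{simple}. So I would take $x\in\Z^d$ with $\exp(2\pi i\langle x,\Theta y\rangle)=1$ for all $y\in\Z^d$, equivalently $\Theta x\in\Z^d$, and show $x=0$. The entries of $\Theta x$ are integer-linear combinations of $1,\theta_0,\theta_1,\dots,\theta_{(p-3)/2}$, and the hypothesis that these reals are independent over $\Z$ forces each coefficient to vanish. The task is then to read off from the vanishing of these coefficients a homogeneous linear system in the integer coordinates $x_1,\dots,x_d$ whose only solution is $x=0$; here the banded anti-palindromic structure of (\ref{prime theta form}) should make the system triangular or otherwise obviously nonsingular. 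I expect this final step to be the main obstacle: one has to organize the independence relations carefully (there are only $(p-1)/2$ independent parameters $\theta_0,\dots,\theta_{(p-3)/2}$, matching the number of distinct superdiagonals), track how each $\theta_k$ appears across the rows of $\Theta x$, and verify the resulting integer system has trivial kernel. Once injectivity of $x\mapsto\Theta x \bmod \Z^d$ is established, nondegeneracy and hence simplicity follow immediately.
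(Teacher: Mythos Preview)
Your overall architecture matches the paper's exactly: Lemma~\ref{theta lemma} reduces to the Toeplitz form, the last-column constraints specific to $\Phi_p$ cut down to (\ref{prime theta form}), part~(2) is the reverse reading, and part~(3) is nondegeneracy plus Theorem~\ref{simple}. The paper dispatches (2) and (3) in one sentence each (``just a simple observation'' and ``not hard to see''), so your more detailed plan for (3) is fine.

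There is, however, a concrete error in your mechanism for part~(1). First a sign slip: the last column of $C_p$ is $(-1,\dots,-1)^t$, so in the notation of Lemma~\ref{theta lemma} one has ${\bf a}=(-1,\dots,-1)^t$, not $(1,\dots,1)^t$. More importantly, the residual constraints are \emph{not} $\Theta_i\,{\bf a}=0$. Comparing the last column of $C_p^t\Theta C_p$ (which, from the $A^t\Theta B$ block, has entries $\Theta_{i+1}\,{\bf a}$ in rows $i=1,\dots,d-1$) with that of $\Theta$ gives
\[
\theta_{i,d}\;=\;-\sum_{j}\theta_{i+1,j}\qquad (i=1,\dots,d-1),
\]
which is not the vanishing of row sums. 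Indeed, row sums of a matrix of the target form (\ref{prime theta form}) do \emph{not} vanish (for $p=5$ the first row sums to $\theta_0$). The paper extracts the anti-palindromic relations from these equations by combining two of them via the Toeplitz identity $\sum_j\theta_{k,j}+\sum_j\theta_{d-k+1,j}=0$ (valid for any $\Theta$ of the form (\ref{general theta form})), which yields $\theta_{1,d}=-\theta_{1,3}$, etc. Once you correct the constraint to the displayed equation above, your plan goes through along the same lines as the paper.
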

\begin{proof} 
For (1), first note that if $\Theta$ 
is a skew symmetric $d\times d$ matrix such that $\A_\Theta$ admits the canonical action of 
$\Z_p=\langle C_p\rangle$, then by Lemma~\ref{theta lemma}, 
$\Theta=(\theta_{ij})$ must be of the form in (\ref{general theta form}).  
Since the $p$th cyclotomic polynomial is $\Phi_p(x)=1+x+ \cdots+x^{p-1}$, 
with ${\bf a}=(-1,\dots,-1)^t$, 
one can repeat the computation performed in the proof of Lemma~\ref{theta lemma}
to obtain  that 
\begin{align*}
C_p^t\Theta C_p & \ = \ A^t\Theta A + A^t\Theta B  + B^t \Theta A +B^t\Theta B\\[3pt]
& \ = \  \begin{pmatrix} 
 \theta_{22} & \cdots & \theta_{2d} & 0 \\
 \vdots & \ddots  &\vdots & 0\\
 \theta_{d2} & \cdots & \theta_{d,d} & 0 \\
 0 &  \cdots & 0 & 0  
\end{pmatrix}
+
\begin{pmatrix} 
0 & \cdots & 0 &-\sum_j\theta_{2j}  \\
\vdots & \ddots &\vdots &\vdots \\
0 &  \cdots & 0 & -\sum_j\theta_{dj}  \\
0 &  \cdots & 0 & 0  
\end{pmatrix}\\[4pt]
& \ + 
\begin{pmatrix} 
0 & \cdots & 0 & 0 \\
\vdots & \ddots &\vdots &\vdots \\
0 &  \cdots & 0 & 0 \\
-\sum_i\theta_{i2} &  \cdots & -\sum_i\theta_{id}  & 0  
\end{pmatrix}
+
\begin{pmatrix} 
0 & \cdots & 0 & 0 \\
\vdots & \ddots &\vdots &\vdots \\
0 &  \cdots & 0 & 0 \\
0 &  \cdots & 0 & \sum_{i,j=1}^d \theta_{ij} 
\end{pmatrix}.
\end{align*}
But then from the fact that $C_p^t\Theta C_p$ is equal to the matrix   
$$\Theta=
\begin{pmatrix} 
 0 & \theta_{12} & \theta_{13} & \cdots & \theta_{1,p-2} & \theta_{1,p-1} \\[11pt]
 -\theta_{12} & 0 &\theta_{12} & \ \theta_{13}&\cdots  &\theta_{1,p-2}  \\[6pt]
 -\theta_{13} &  -\theta_{12} & 0 & \ \,\theta_{12} &  \cdots& \vdots\\[6pt]
 \vdots & \ddots & \ddots &\ddots &\ddots &  \theta_{13}  \\[7pt]
 -\theta_{1,p-2} & &   & & 0 &  \theta_{12} \\[9pt]
 -\theta_{1,p-1} & -\theta_{1,p-2}&   \cdots & -\theta_{13}  & -\theta_{12} & 0  
\end{pmatrix}, 
$$
comparing the last columns of $C_p^t\Theta C_p$ and $\Theta$, we have 
$$\theta_{1,p-1}=-\sum_j \theta_{2j},\ \theta_{1,p-2}=-\sum_j \theta_{3j},\  \dots, \
\theta_{13}=-\sum_j \theta_{d-1,j},\ \theta_{12}=-\sum_j \theta_{d,j}.$$
Note here that for any $\Theta$ of the above form, 
 $\sum_j\theta_{kj}+\sum_j\theta_{d-(k-1),j}=0$ for each $k=1, \dots, d/2$,  
which shows $$\theta_{1,p-1}=-\sum_j \theta_{2j}=\sum_j\theta_{d-1,j}=-\theta_{13}.$$ 
Similarly we see that $\theta_{1k}+ \theta_{1, d-(k-1)} =0$ 
for all $k=3, \dots, d/2$.

It is just a simple observation from the 
above computation  to see that $C_p^t\Theta C_p=\Theta$ 
holds for $\Theta$ in (\ref{prime theta form}), thus (2) follows. 
Also, it is not hard to see that 
if $1, \theta_0, \theta_1, \dots, \theta_{(p-3)/2}$ are independent over $\Z$, 
then the skew symmetric matrix $\Theta$ is nondegenerate,
which proves (3).  
\end{proof}

\vskip 1pc

\begin{cor}\label{cor for prime case} 
Let $p\geq 3$ be prime with $d=p-1$ and 
$\Theta$ be a nondegenerate skew symmetric $d\times d$ matrix 
of the form in (\ref{prime theta form}). 
Let $\af: \Z_p\to \Aut (\A_\Theta)$ be the canonical action of  
$\Z_p=\langle C_p\rangle$ on the simple $d$-torus $\A_\Theta$. 
Then the crossed product 
$\A_\Theta\rtimes_\af \Z_p$ is an AF algebra if and only if 
$p=3$ or $5$.
\end{cor}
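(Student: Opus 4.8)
The plan is to reduce the AF-ness of $\A_\Theta\rtimes_\af \Z_p$ entirely to a $K_1$-computation, and then invoke the exact arithmetic dichotomy already packaged in the earlier results. By Proposition~\ref{AT}, since $\af$ is the canonical action of $\Z_p=\langle C_p\rangle$ on the simple $d$-torus $\A_\Theta$ with $d=\phi(p)=p-1$, the crossed product is automatically an AT algebra, and it is an AF algebra precisely when $K_1(C^*(\Z^d\rtimes_\af \Z_p))=0$. (Here I am using Remark~\ref{K-groups} to replace $K_1(\A_\Theta\rtimes_\af\Z_p)$ by the untwisted $K_1(C^*(\Z^d\rtimes_\af\Z_p))$, and Proposition~\ref{away 0} to guarantee the action is free outside the origin, which is the hypothesis needed to apply the formula of Theorem~\ref{LL12}.) So the entire statement collapses to deciding when this $K_1$ vanishes.

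Next I would appeal directly to Theorem~\ref{LL12}. Since $p\geq 3$ is prime and $d=p-1$, we are exactly in the case covered by the last clause of that theorem, which gives the closed form
\begin{equation}\label{s1formula}
s_1=\frac{2^{p-1}-(p-1)^2}{2p},
\end{equation}
where $K_1(C^*(\Z^d\rtimes_\af\Z_p))\cong \Z^{s_1}$. Thus $\A_\Theta\rtimes_\af\Z_p$ is AF if and only if $s_1=0$, i.e. if and only if $2^{p-1}=(p-1)^2$. The remaining work is the elementary number-theoretic verification that $2^{p-1}=(p-1)^2$ holds exactly for $p=3$ and $p=5$ among primes. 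For $p=3$ one checks $2^{2}=4=(2)^2$, and for $p=5$ one checks $2^{4}=16=(4)^2$; so $s_1=0$ in both cases and the crossed product is AF.

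Finally I would establish that no other prime works. For this it suffices to show $2^{p-1}>(p-1)^2$ for all primes $p\geq 7$; together with the vanishing at $p=3,5$ this pins down the dichotomy. The inequality $2^{p-1}>(p-1)^2$ is most cleanly handled by induction on $p$ (or on the integer $m=p-1$): one verifies a base case, say $m=6$ giving $2^{6}=64>36=6^2$, and then observes that the exponential doubles at each step while the square grows by a factor less than $2$ once $m$ is large, so the gap only widens. I expect the main (though still routine) obstacle to be nothing conceptual but merely arranging this growth comparison carefully enough that the induction is airtight for all $p\geq 7$; equivalently one may quote the standard fact that $2^m>m^2$ for every integer $m\geq 5$, which immediately forces $s_1>0$ and hence $K_1\neq 0$ for every prime $p\geq 7$. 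This completes the characterization: $\A_\Theta\rtimes_\af\Z_p$ is AF if and only if $p=3$ or $p=5$.
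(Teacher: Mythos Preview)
Your argument is correct and follows exactly the paper's approach: reduce via Proposition~\ref{AT} to the vanishing of $K_1(C^*(\Z^d\rtimes_\af\Z_p))$, then invoke the formula $s_1=\frac{2^{p-1}-(p-1)^2}{2p}$ from Theorem~\ref{LL12}. The only difference is that you spell out the elementary verification that $2^{p-1}=(p-1)^2$ holds exactly for $p=3,5$, whereas the paper simply cites Theorem~\ref{LL12} (ultimately \cite[Theorem~0.3]{DL}) for this dichotomy.
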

\begin{proof}  By Proposition~\ref{AT}, 
we know that $\A_\Theta\rtimes_\af \Z_p$ is AF if and only if 
 $K_1( \A_\Theta\rtimes_\af \Z_p)=0$,
and this is the case when 
$p=3$ or $5$ from  Theorem~\ref{LL12}. 
\end{proof}

\vskip 1pc 

\section{Canonical actions on three dimensional simple tori}

\noindent
In the previous sections we considered the canonical action 
by the finite group $\Z_{n}$ on noncommutative 
$\phi(n)$-dimensional tori for $n\geq 3$. 
But $\phi(n)$ is always an even number for $n\geq 3$, 
thus the method using companion matrices 
won't work to find  finite groups acting on odd-dimensional tori.
  
In this section we will focus on 3-dimensional tori and 
show that no simple noncommutative $3$-tori   
admit the canonical actions of finite groups but the flip action:  
 Recall that the {\it flip action} on a $d$-torus $\A_\Theta$ 
 is the canonical action of $\Z_2=\{I_d, -I_d\}$ generated by the flip automorphism
 which sends each generator $l_\Theta(e_i)$ 
 to its adjoint $l_\Theta(e_i)^*=l_\Theta(-I_d\, e_i)$,  $i=1,\dots, d$.

\vskip 1pc

Two group actions  
$\alpha : G \rightarrow \Aut(\A)$ and $\beta :H \rightarrow \Aut(\BB)$ 
on $C^*$-algebras $\A$ and $\BB$ are 
{\it conjugate} if there exist a group isomorphism 
$\psi:G \rightarrow H$ and a $C^*$-isomorphism 
$\rho: \A \rightarrow \BB$ such that 
$\beta_{\psi(g)}\circ \rho = \rho \circ \alpha_g$ for all $g\in G$. 
This is an equivalence relation and two conjugate dynamical systems  
give rise to isomorphic crossed products.
In the following proposition we provide a sufficient condition on 
two matrices $\Theta,\Theta'\in \TT_d(\R)$ that     
every canonical action  on $\A_\Theta$ 
is conjugate to a canonical action  on $\A_{\Theta'}$. 
 
\vskip 1pc
 
\begin{prop}\label{conj} 
Let $\Theta$ and $\Theta'$ be two  matrices 
in $\TT_d(\R)$ such that 
$$\Theta  = (B^{-1})^t \Theta' B^{-1}$$ 
for some $B\in \GL_d(\Z)$. 
Then  we have the following:
\begin{enumerate}
\item[(1)] The map $\psi: G_{\Theta'} \to G_{\Theta}$,     
 $\psi(A) = BAB^{-1}$, is a group isomorphism.
\item[(2)] If $\alpha : G' \rightarrow \Aut(\A_{\Theta'})$ is a canonical action 
of a subgroup $G'$ of $G_{\Theta'}$, it is conjugate to 
the canonical action $\beta : \psi(G') \rightarrow \Aut(\A_{\Theta})$ of $\psi(G')$.
\end{enumerate}
\end{prop}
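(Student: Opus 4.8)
The plan is to produce explicitly the two ingredients required by the definition of conjugacy --- a group isomorphism and an intertwining $C^*$-isomorphism --- and then to verify the intertwining relation on the canonical generators $l_{\Theta'}(y)$, $y\in\Z^d$.

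For (1), I would first rewrite the hypothesis $\Theta=(B^{-1})^t\Theta' B^{-1}$ in the equivalent form $\Theta'=B^t\Theta B$. To see that $\psi$ lands in $G_\Theta$, I would take $A\in G_{\Theta'}$ (so that $A^t\Theta' A=\Theta'$) and compute
$$(BAB^{-1})^t\Theta(BAB^{-1}) = (B^{-1})^t A^t (B^t\Theta B) A B^{-1} = (B^{-1})^t A^t \Theta' A B^{-1} = (B^{-1})^t \Theta' B^{-1} = \Theta.$$
That $\psi$ is a homomorphism is immediate, and running the same computation with $B^{-1}$ in place of $B$ shows that $A'\mapsto B^{-1}A'B$ maps $G_\Theta$ into $G_{\Theta'}$ and is a two-sided inverse, giving the claimed isomorphism.

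For (2), the natural candidate for $\rho$ is the restriction to $\A_{\Theta'}$ of the automorphism $\Ad\U_B$ of $B(\ell^2(\Z^d))$. Applying the computation already carried out in Remark~\ref{isomorphisms}(1) (with the matrix $B$ and the algebra $\A_{\Theta'}$), I expect
$$\Ad\U_B(l_{\Theta'}(y)) = l_{(B^{-1})^t\Theta' B^{-1}}(By) = l_\Theta(By), \qquad y\in\Z^d.$$
Since $B$ is invertible over $\Z$, the map $y\mapsto By$ permutes $\Z^d$, so $\Ad\U_B$ carries the generating set $\{l_{\Theta'}(y)\}$ of $\A_{\Theta'}$ onto the generating set $\{l_\Theta(z)\}$ of $\A_\Theta$; being the restriction of an automorphism, it is then a $C^*$-isomorphism $\rho:\A_{\Theta'}\to\A_\Theta$.

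The final step is to verify $\beta_{\psi(A)}\circ\rho=\rho\circ\alpha_A$ for all $A\in G'$, for which it suffices to check on generators. Using $\rho(l_{\Theta'}(y))=l_\Theta(By)$, the formulas $\alpha_A(l_{\Theta'}(y))=l_{\Theta'}(Ay)$ and $\beta_{\psi(A)}(l_\Theta(z))=l_\Theta(\psi(A)z)$ for the two canonical actions, and $\psi(A)=BAB^{-1}$, both sides reduce to $l_\Theta(BAy)$. I expect no serious obstacle here: the argument is essentially bookkeeping with the two conjugation identities. The one point that deserves care is justifying that $\rho$ is a genuine isomorphism of the torus algebras rather than merely a map prescribed on generators, and this is exactly what the explicit formula from Remark~\ref{isomorphisms}(1), together with the $\Z$-invertibility of $B$, supplies.
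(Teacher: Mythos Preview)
Your proposal is correct and matches the paper's proof almost exactly: both parts use $\rho=\Ad\U_B$ via Remark~\ref{isomorphisms}(1) and verify the intertwining on generators, reducing both sides to $l_\Theta(BAy)$. The only cosmetic difference is that in (1) the paper checks $((BAB^{-1})^{-1})^t\Theta(BAB^{-1})^{-1}=\Theta$ rather than your direct $(BAB^{-1})^t\Theta(BAB^{-1})=\Theta$, which is an equivalent verification.
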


\begin{proof}
(1) Let $A\in G_{\Theta'}$. Then 
$\psi(A)=BAB^{-1}\in G_{\Theta}$ follows from  
\begin{align*}
((BAB^{-1})^{-1})^t \Theta (BAB^{-1})^{-1} 
&=(B^{-1})^t(A^{-1})^tB^t\Theta  BA^{-1}B^{-1}\\
   &= (B^{-1})^t(A^{-1})^t\Theta' A^{-1}B^{-1}  \\
   &= (B^{-1})^t\Theta' B^{-1}  \\
   &= \Theta.  
\end{align*}
Clearly $\psi$ is a group homomorphism with the inverse 
 $A \mapsto B^{-1}A B$, $A \in G_{\Theta}$. 

(2) One can check that  
$\rho:= \Ad U_B : \A_{\Theta'} \rightarrow \A_{\Theta}$ 
is an isomorphism  such that 
$\rho(l_{\Theta'}(x))=l_{\Theta}(Bx)$ for $x\in \Z^d$ (see Remark~\ref{isomorphisms}(1)),  
and then for $A\in G'$,   
$$\rho\circ \alpha_A(l_{\Theta'} (x)) =\rho (l_{\Theta'} (Ax)) = l_{\Theta }(BAx).$$
On the other hand, since $\bt$ is a canonical action, one also has
$$\beta_{\psi(A)}\circ \rho (l_{\Theta'}(x))  =\beta_{\psi(A)}(l_{\Theta}(Bx)) 
       =l_{\Theta}(\psi(A)Bx) =l_{\Theta}(BAx)$$
for all $A\in G'$ and $x\in \Z^d$, which completes the proof. 
\end{proof}

\vskip 1pc
  
As in the 2-dimensional case, 
if we have the complete list of group elements of finite order in $\GL_d(\Z)$ 
up to conjugacy, then by Proposition \ref{conj}  
we would be able to find all canonical actions 
by finite cyclic groups on $d$-dimensional tori. 
Actually this is possible for $d=3$ by virtue of the list (see Table \ref{table}) 
established in \cite{Th}.

\vskip .5pc  

\begin{center}
\begin{table}[h]
\caption{Elements of finite order in $\GL_3(\Z)$} \label{table}
\begin{tabular}{c|c}\hline
order & generators \\
\hline\hline
& \\
 & $A^2_1=\begin{pmatrix}{1}&{0}&{0}\\{0}&{-1}&{0}\\{0}&{0}&{-1}\end{pmatrix}$, 
    $A^2_2= \begin{pmatrix}{-1}&{0}&{0}\\{0}&{1}&{0}\\{0}&{0}&{1}\end{pmatrix}$,\\
& \\
  2& $A^2_3=\begin{pmatrix}{-1}&{0}&{0}\\{0}&{0}&{1}\\{0}&{1}&{0}\end{pmatrix}$,
    $A^2_4=\begin{pmatrix}{1}&{0}&{0}\\{0}&{0}&{-1}\\{}&{-1}&{0}\end{pmatrix}$, \\
& \\
  & $A^2_5= \begin{pmatrix}{-1}&{0}&{0}\\{0}&{-1}&{0}\\{0}&{0}&{-1}\end{pmatrix}$\\ 
 & \\  
\hline 
 & \\
3 & $A^3_1=\begin{pmatrix}{1}&{0}&{0}\\{0}&{0}&{-1}\\{0}&{1}&{-1}\end{pmatrix}$,
      $A^3_2=\begin{pmatrix}{0}&{1}&{0}\\{0}&{0}&{1}\\{1}&{0}&{0}\end{pmatrix}$ \\
&  \\
\hline
& \\
& $A^4_1=\begin{pmatrix}{1}&{0}&{0}\\{0}&{0}&{-1}\\{0}&{1}&{0}\end{pmatrix}$,
   $A^4_2= \begin{pmatrix}{-1}&{0}&{0}\\{0}&{0}&{1}\\{0}&{-1}&{0}\end{pmatrix}$, \\
4 & \\
 & $A^4_3=\begin{pmatrix}{1}&{0}&{1}\\{0}&{0}&{-1}\\{0}&{1}&{0}\end{pmatrix}$,
    $A^4_4= \begin{pmatrix}{-1}&{0}&{-1}\\{0}&{0}&{1}\\{0}&{-1}&{0}\end{pmatrix}$ \\
 & \\
\hline
& \\
& $A^6_1=\begin{pmatrix}{1}&{0}&{0}\\{0}&{0}&{-1}\\{0}&{1}&{1}\end{pmatrix}$,
   $A^6_2=\begin{pmatrix}{-1}&{0}&{0}\\{0}&{0}&{1}\\{0}&{-1}&{-1}\end{pmatrix},$ \\
6 & \\
 & $A^6_3=\begin{pmatrix}{-1}&{0}&{0}\\{0}&{0}&{1}\\{0}&{-1}&{1}\end{pmatrix}$,
    $A^6_4=\begin{pmatrix}{0}&{-1}&{0}\\{0}&{0}&{-1}\\{-1}&{0}&{0}\end{pmatrix}$\\
    & \\
\hline
\end{tabular}
\end{table}
\end{center}

\begin{thm}\label{3-torus}
The only canonical action by a nontrivial finite cyclic group  
on a simple $3$-dimensional torus is the flip action;
if $A\in \GL_3(\Z)$, $A\neq -I_3$, is a matrix in Table \ref{table},  
every $\Theta \in \TT_{3,A}(\R)$ is degenerate.
\end{thm}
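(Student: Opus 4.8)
The plan is to reduce everything to the second, technical assertion: that every $\Theta\in\TT_{3,A}(\R)$ is degenerate whenever $A\neq -I_3$ is a finite-order matrix of Table~\ref{table}. This suffices, because a nontrivial finite cyclic group acting canonically on $\A_\Theta$ is generated by some finite-order $A\in G_\Theta\subset\GL_3(\Z)$, and by Proposition~\ref{conj} together with the classification of \cite{Th} I may assume $A$ is one of the matrices in Table~\ref{table}. If $A=-I_3$ this is exactly the flip action of $\Z_2=\langle -I_3\rangle$; for every other listed matrix, degeneracy of $\Theta$ forces $\A_\Theta$ to be non-simple by Theorem~\ref{simple}, so no such group can act on a simple $3$-torus. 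Since $\Theta$ is degenerate as soon as there is a nonzero $x\in\Z^3$ with $\Theta x\in\Z^3$, it is enough to produce such an $x$.

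The key device I would use is the linear isomorphism $\TT_3(\R)\to\R^3$, $\Theta\mapsto w$, determined by $\Theta y = w\times y$ for all $y\in\R^3$; explicitly $w=(-\theta_{23},\theta_{13},-\theta_{12})$, and $\ker\Theta=\R\,w$ whenever $\Theta\neq 0$ (a $3\times 3$ skew matrix has rank $0$ or $2$). Using the standard identity $(Mu)\times(Mv)=\det(M)\,(M^{-1})^t(u\times v)$ for $M\in\GL_3$, a short computation gives $(A^t\Theta A)y=\det(A)\,\bigl((A^{-1}w)\times y\bigr)$, so that $A^t\Theta A$ corresponds to the vector $\det(A)\,A^{-1}w$. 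Hence the defining condition $A^t\Theta A=\Theta$ of $\TT_{3,A}(\R)$ is equivalent to
$$A\,w=\det(A)\,w,$$
i.e. the vector $w$ spanning $\ker\Theta$ is an eigenvector of $A$ for the eigenvalue $\det(A)\in\{\pm 1\}$. In particular $\TT_{3,A}(\R)$ is linearly isomorphic to the eigenspace $E:=\ker\bigl(A-\det(A)I_3\bigr)$, which is a rational (indeed integral) subspace of $\R^3$ since $A$ has integer entries.

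It then remains to show that $E$ is one-dimensional whenever $A\neq\pm I_3$ has finite order. Granting this, $E=\R\,w_0$ for some nonzero $w_0\in\Z^3$, every $\Theta\in\TT_{3,A}(\R)$ has $w\in\R\,w_0$, and therefore $\Theta w_0=w\times w_0=0\in\Z^3$ exhibits the required nonzero integer vector in $\ker\Theta$ (this also covers $\Theta=0$ trivially). To prove $\dim E=1$, I note that the eigenvalues of a finite-order $A$ are roots of unity, and, $A$ being a real $3\times 3$ matrix, at least one eigenvalue is real, hence $\pm 1$. A finite-order matrix is diagonalizable, so a single repeated eigenvalue would force $A=\pm I_3$; thus for $A\neq\pm I_3$ either (i) all three eigenvalues are real, lie in $\{\pm 1\}$, and have multiplicities $2$ and $1$ in some order, whence the eigenvalue equal to their product $\det(A)$ occurs exactly once; or (ii) $A$ has a non-real eigenvalue $\zeta$, so $\bar\zeta$ is the third eigenvalue, $\zeta\bar\zeta=1$, the real eigenvalue $r\in\{\pm 1\}$ is simple, and $\det(A)=r\,\zeta\bar\zeta=r$. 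In both cases the $\det(A)$-eigenspace has multiplicity one, so $\dim E=1$, which finishes the proof.

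The hard part is precisely the determination that the eigenvalue realized on $\ker\Theta$ is $\det(A)$ rather than merely some element of $\{\pm 1\}$: for the reflection-type matrices with eigenvalues $\{1,1,-1\}$ or $\{1,-1,-1\}$, one of the two $\pm 1$-eigenspaces is two-dimensional, so knowing only that $w$ is a $\pm 1$-eigenvector would not yield that $w$ is rational. The cross-product identity removes this ambiguity by pinning the eigenvalue to $\det(A)$, which is always the simple one. One could instead verify $\dim\TT_{3,A}(\R)=1$ and the rationality of its kernel by inspecting each matrix of Table~\ref{table} individually, but the uniform computation above is shorter and makes transparent why the flip $-I_3$ (for which $E=\R^3$) is the sole exception.
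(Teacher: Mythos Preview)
Your proof is correct and takes a genuinely different route from the paper's. The paper argues case by case: for each matrix $A\neq -I_3$ in Table~\ref{table} it computes the constraint $A^t\Theta A=\Theta$ directly, writes down the resulting one-parameter family of $\Theta$'s, and exhibits an integer kernel vector (only $A_1^2$ is worked out explicitly; the remaining fourteen matrices are left to the reader). Your argument is uniform: the cross-product isomorphism $\TT_3(\R)\cong\R^3$ converts $A^t\Theta A=\Theta$ into the eigenvector condition $Aw=\det(A)\,w$, and a short spectral argument shows that for any finite-order $A\neq\pm I_3$ the $\det(A)$-eigenspace is one-dimensional and hence rational, so its integer generator lies in $\ker\Theta$. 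This not only avoids fifteen separate computations but in fact proves more than the paper's second clause: you never use that $A$ appears in Table~\ref{table}, so the argument applies directly to an arbitrary finite-order $A\in\GL_3(\Z)$, making the preliminary conjugacy reduction via \cite{Th} unnecessary for your approach. The paper's method has the modest advantage of being completely elementary (no cross-product identity needed), but yours is shorter, conceptually cleaner, and explains \emph{why} dimension three is special and why the flip $-I_3$ is the sole exception.
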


\begin{proof}
If a 3-torus $\A_{\Theta'}$ admits a canonical action of a finite cyclic group 
$G'\subset G_{\Theta'}$, then $G'$ must be conjugate to a cyclic group generated by 
a matrix $A$ in the table, so that there exists $B\in \GL_3(\Z)$ such that 
$G'=\langle B^{-1}AB\rangle$ is generated by $B^{-1}AB$. 
 But then  by Proposition~\ref{conj},
 the cyclic group $\langle A\rangle$ canonically acts on 
 the 3-torus $\A_\Theta$, where $\Theta:= (B^{-1})^t \Theta' B^{-1}$. 
Also, it is rather obvious that $\Theta$ is nondegenerate exactly when 
$\Theta'$ is nondegenerate. 
Therefore  by Theorem~\ref{simple} it is enough to show that if $A$
is one of the matrices listed in the table and $A\neq -I_3(=A_5^2)$,  
every $\Theta\in \TT_{3,A}(\R)$ should be degenerate.   
 
Recall that $\Theta\in \mathcal{T}_d(\R)$ is degenerate  
if there exists a nonzero $x\in\Z^d$ such that 
$\exp(2\pi i \langle \Theta x , y \rangle )=1$ for all $y\in \Z^d$,   
or equivalently if there is a nonzero $x\in \Z^d$ with 
$\langle \Theta x, e_j \rangle \in \Z$ for all $j=1,\cdots,d$.   
Thus, to obtain the degeneracy of $\Theta\in \TT_{3,A}(\R)$, 
we  find  nonzero elements $x\in \Z^3$ with  $\Theta x\in \Z^3$. 
 
It is rather tedious to do the same calculation with all the 
matrices in the table, so here we only do with $A=A^2_1$ 
and leave the rest to readers.
If $\Theta=(\theta_{kj})\in  \TT_{3,A}(\R)$, that is  
$\displaystyle 
\Theta-A^t\Theta A=\begin{pmatrix}{0}&{2\theta_{12}}&{2\theta_{13}}\\
{-2\theta_{12}}&{0}&{0}\\{-2\theta_{13}}&{0}&{0}\end{pmatrix}$
is the zero matrix, 
then $\Theta$ must be of the form
$\displaystyle 
\begin{pmatrix}{0}& 0 & 0\\
 0 & 0 & s\\ 0 &{-s}& 0 \end{pmatrix}  
$ for an $s\in \R$. 
Any such matrix $\Theta$ is degenerate; in fact, 
$\Theta x=(0,0,0)^t\in\Z^3$ for any $x=(k,0,0)^t\in \Z^3$.
\end{proof} 

\vskip 1pc
 
\section{Canonical actions on four dimensional simple tori}  

\noindent
For the rest of the paper, 
we concretely examine examples of canonical actions on $4$-dimensional tori
$\A_\Theta$ by $\Z_n=\langle C_n\rangle$  with $\phi(n)=4$.

Since $\phi(n)=p_1^{r_1-1}(p_1-1)\cdots p_s^{r_s-1}(p_s-1)$ when  
$n=p_1^{r_1}\cdots p_s^{r_s}$ is the prime factorization of $n$,  
$\phi(n)=4$ implies that  $n$ should be equal to $5, 8, 10$ or $12$. 
For each of these $n$, since the $n$th cyclotomic polynomial is
$\Phi_5(x) =1+x+x^2+x^3+x^4$, 
$\Phi_8(x) =1+x^4$, 
$\Phi_{10}(x) =1-x+x^2-x^3+x^4$, and
$\Phi_{12}(x) =1-x^2+x^4$, respectively, 
the companion matrix $C_n$ (of order $n$) of (\ref{comp matrix}) 
is given by: 
\begin{equation}\label{C_n}
\begin{aligned}
C_5&=   \begin{pmatrix}
       {0}&{0}&{0}&{-1}\\{1}&{0}&{0}&{-1}\\{0}&{1}&{0}&{-1}\\{0}&{0}&{1}&{-1}
        \end{pmatrix}, \ \ 
C_8=   \begin{pmatrix} 
       {0}&{0}&{0}&{-1}\\{1}&{0}&{0}&{0}\\{0}&{1}&{0}&{0}\\{0}&{0}&{1}&{0}
       \end{pmatrix},  \\ 
C_{10}&= \begin{pmatrix} 
       {0}&{0}&{0}&{-1}\\{1}&{0}&{0}&{1}\\{0}&{1}&{0}&{-1}\\{0}&{0}&{1}&{1}
       \end{pmatrix}, \ \  
C_{12}= \begin{pmatrix}
       {0}&{0}&{0}&{-1}\\{1}&{0}&{0}&{0}\\{0}&{1}&{0}&{1}\\{0}&{0}&{1}&{0}
       \end{pmatrix}.
\end{aligned}
\end{equation}

Recall that the group $\Z_n=\langle C_n\rangle$  canonically acts on 
a $4$-torus $\A_\Theta$ exactly when $C_n^t \Theta C_n=\Theta$, 
or equivalently when $\Theta\in \TT_{4,C_n}(\R)$.
Since every skew symmetric matrix $\Theta\in  \TT_{4,C_n}(\R)$ 
has the form in (\ref{general theta form}), by a simple computation 
we see that 
\begin{eqnarray}
\TT_{4,C_n}(\R) =\Big\{
\begin{pmatrix} 
{0}&{\theta}&{\mu}&{\nu_n}\\
 &{0}&{\theta}&{\mu}\\
 & &{0}&{\theta}\\
 & & &{0}
\end{pmatrix} : \theta, \mu \in \R \Big\}, \label{T_n}
\end{eqnarray}
where $\nu_5=-\mu$, $\nu_8=\theta$, $\nu_{10}=\mu$, and $\nu_{12}=2\theta$. 

Moreover,  $\Theta\in \TT_{4,C_n}(\R)$ is easily seen to be nondegenerate 
whenever $1$, $\theta$, $\mu$ are independent over $\Z$.
 
\vskip 1pc
 
\begin{prop} If $\Theta$ is a skew symmetric 
$4\times 4$ matrix in $\TT_{4,C_{n}}(\R)$ for some $n=5,8,10, 12$, 
then the noncommutative $4$-torus $\A_\Theta$ admits the canonical action $\af$
by the finite group $\Z_n=\langle C_n\rangle$ generated by 
$C_n$ in {\rm (\ref{C_n})}. 
If $\Theta\in \TT_{4,C_{n}}(\R)$ is nondegenerate, the crossed product 
$\A_\Theta\rtimes_\af \Z_n$ is an AF algebra.
\end{prop}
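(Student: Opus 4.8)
The plan is to split the statement into its two assertions and dispatch each by invoking the machinery already assembled in the preceding sections. For the first assertion I would argue that it is essentially a restatement of the defining condition of $\TT_{4,C_n}(\R)$. By Notation~\ref{TT-A}, the hypothesis $\Theta\in\TT_{4,C_n}(\R)$ means precisely $C_n^t\Theta C_n=\Theta$, which is exactly the condition $C_n\in G_\Theta$. Since $G_\Theta$ is a group, it then contains every power of $C_n$, so $\Z_n=\langle C_n\rangle\subseteq G_\Theta$. Lemma~\ref{lemma2.1}(2), together with Definition~\ref{canonical automorphism}, now produces the canonical action $\af$ of $\Z_n$ on $\A_\Theta$, giving the first claim.

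For the second assertion I would first use nondegeneracy to pass to the simple setting: if $\Theta\in\TT_{4,C_n}(\R)$ is nondegenerate, then $\A_\Theta$ is simple by Theorem~\ref{simple}, and the canonical action of $\Z_n$ on this simple $4$-torus falls squarely under the hypotheses of Proposition~\ref{AT}. That proposition already tells us that $\A_\Theta\rtimes_\af\Z_n$ is an AT algebra, and is AF exactly when $K_1(C^*(\Z^4\rtimes_\af\Z_n))=0$; it also records that the vanishing is automatic when $n$ is even. Since $n\in\{8,10,12\}$ are all even, this immediately settles three of the four cases.

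The only case left to treat is $n=5$, and here I would invoke the $K_1$-computation already available. Because $5$ is prime and $d=\phi(5)=4=5-1$, the prime formula in Theorem~\ref{LL12} gives $s_1=\frac{2^{5-1}-(5-1)^2}{2\cdot 5}=\frac{16-16}{10}=0$, so that $K_1(C^*(\Z^4\rtimes_\af\Z_5))=0$ and Proposition~\ref{AT} yields that $\A_\Theta\rtimes_\af\Z_5$ is AF. Alternatively one may cite Corollary~\ref{cor for prime case} directly with $p=5$. Either route settles all four values of $n$.

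I do not anticipate a genuine obstacle, since every step is a direct appeal to a previously established result rather than a new argument; the statement is really a packaging of Theorem~\ref{simple}, Proposition~\ref{AT}, and the $K_1$ vanishing from Theorem~\ref{LL12}. The only point requiring any computation at all is the $n=5$ case, and even that is subsumed by the prime-case analysis already carried out.
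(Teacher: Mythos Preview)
Your proposal is correct and follows essentially the same approach as the paper: for the even cases you invoke Proposition~\ref{AT} (via Theorem~\ref{LL12}), and for $n=5$ you use the prime case, exactly as the paper does (it cites Corollary~\ref{cor for prime case}). The only difference is that you spell out the first assertion explicitly, while the paper treats it as immediate from the definition of $\TT_{4,C_n}(\R)$ and omits it from the proof.
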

\begin{proof} For $n$ even, $K_1(\Z^4\rtimes_\af \Z_n)=0$ by 
Theorem~\ref{LL12} and $\A_\Theta\rtimes_\af \Z_n$ is AF 
by Proposition~\ref{AT}. 
$\A_\Theta\rtimes_\af \Z_5$ is AF by 
Corollary~\ref{cor for prime case}.
\end{proof}

\vskip 1pc

\begin{remark} Let $m=p_1^{k_1}p_2^{k_2}\cdots p_t^{k_2}$ 
be the prime factorization of an integer $m\in \mathbb N$, where  
$p_1<p_2<\cdots < p_t$ are primes. 
Then it is known (\cite[Theorem 2.7]{KP}) that the group 
$\GL_n(\Z)$ has an element of order $m$ if and only if 
\begin{enumerate}
\item[(1)] $\sum_{i=1}^t (p_i-1)p_i^{k_i -1} -1\leq n$ for $p_1^{k_1}=2$, or
\item[(2)] $\sum_{i=1}^t (p_i-1) p_i^{k_1 -1}\leq n$ otherwise.
\end{enumerate}
Thus, with $n=4$, we see that any possible finite order of a matrix in 
$\GL_4(\Z)\setminus\{I_4\}$ is  
one of   $ 2,3,4,5,6,8,10, 12$. 
It should be noted that the action by $\Z_5$    
  is not conjugate to any product action (on $\A_\theta\otimes \A_\theta$)  
  of two canonical actions by finite cyclic subgroups $F(\subset \SL_2(\Z))$ on $\A_\theta$
  because $F$ is necessarily isomorphic to $\Z_2, \Z_3, \Z_4$, or $\Z_6$. 
  The actions by these $F$ are the only finite group actions on noncommutative tori 
  found in the literature at least to the knowledge of the authors, which led us to 
  work on finite group actions on higher dimensional tori. 
\end{remark}

\vskip 1pc

Now we consider 4-dimensional noncommutative tori that are isomorphic to 
the tensor product $\A_\theta\otimes \A_\theta$ of 
an irrational rotation algebra $\A_\theta$ with itself. 
If  $\A_\Theta$ is associated with  
the following skew symmetric matrix   
\begin{equation}\label{theta-theta}
\Theta=
\begin{pmatrix}
{0}&{\theta}&{0}&{0}\\
{-\theta}&{0}&{0}&{0}\\
{0}&{0}&{0}&{\theta}\\
{0}&{0}&{-\theta}&{0}
\end{pmatrix}, 
\end{equation}
it is easily seen to be isomorphic to the tensor product 
$\A_\theta \otimes \A_\theta$.
Moreover the following skew symmetric matrices $\Theta_{n,\theta}$,
\begin{equation}\label{Theta_5,10}
\Theta_{5,\theta}=\Theta_{10,\theta} =
\begin{pmatrix}
    {0}&{\theta}&{0}&{0}\\
    {-\theta}&{0}&{\theta}&{0}\\
    {0}&{-\theta}&{0}&{\theta}\\
    {0}&{0}&{-\theta}&{0}
\end{pmatrix}\in \TT_{4,C_5}(\R)\cap \TT_{4,C_{10}}(\R),
\end{equation}
\begin{equation}\label{Theta_8,12}
\Theta_{8,\theta}=\Theta_{12,\theta} =
\begin{pmatrix}
{0}&{0}&{\theta}&{0}\\
{0}&{0}&{0}&{\theta}\\
{-\theta}&{0}&{0}&{0}\\
{0}&{-\theta}&{0}&{0}
\end{pmatrix}\in \TT_{4,C_8}(\R)\cap \TT_{4,C_{12}}(\R),
\end{equation}
(see (\ref{T_n})) give rise to $4$-dimensional tori 
isomorphisc to $\A_\theta\otimes \A_\theta$:

\vskip 1pc

\begin{lemma} \label{B_n}
Let $\Theta$ be the matrix in {\rm (\ref{theta-theta})} 
with  $\theta\in \R$ and $\Theta_{n,\theta}$ be one of the 
matrices in  {\rm (\ref{Theta_5,10})} or {\rm (\ref{Theta_8,12})} 
for $n=5, 8, 10, 12$. 
We then have the following: 
\begin{enumerate}
\item[(1)] There exists $B_n\in \GL_n(\Z)$ with 
  $B_n^t\Theta_{n,\theta} B_n = \Theta$.
\item[(2)] $G_\Theta = B_n^{-1} G_{\Theta_{n,\theta}} B_n$. 
\item[(3)]  $\A_{\Theta_{n,\theta}}$ is isomorphic to $\A_\Theta$. 
\end{enumerate} 
\end{lemma}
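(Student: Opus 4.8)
The plan is to isolate all the genuine content in part (1) and obtain parts (2) and (3) as formal consequences of it together with the already-proved Proposition~\ref{conj}. First I would observe that each of $\Theta$, $\Theta_{5,\theta}=\Theta_{10,\theta}$, and $\Theta_{8,\theta}=\Theta_{12,\theta}$ is just $\theta$ times a fixed \emph{integer} skew-symmetric matrix, and that the scalar $\theta$ factors through the congruence $B^t(\cdot)B$; hence it suffices to intertwine the integer parts by a single $B_n\in\GL_4(\Z)$. Since there are really only two distinct integer forms in play (one for $n=5,10$ and one for $n=8,12$), I expect to produce just two matrices, $B_5=B_{10}$ and $B_8=B_{12}$.

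For (1) I would regard each integer skew matrix as an alternating form $\langle e_i,e_j\rangle$ on $\Z^4$ and apply the symplectic Gram--Schmidt procedure to bring it to the standard block shape $\langle f_1,f_2\rangle=\langle f_3,f_4\rangle=1$ with all remaining pairings zero, which is precisely the form defining $\Theta/\theta$. A direct Pfaffian computation shows each form has Pfaffian $\pm1$, so it is unimodular and the reduction can be performed over $\Z$ rather than merely over $\Q$; consequently the change-of-basis matrix $B_n$, whose columns are the $f_i$, lies in $\GL_4(\Z)$. Concretely, for $n=8,12$ a permutation of basis vectors suffices, while for $n=5,10$ a single integer shear (sending $e_3\mapsto e_1+e_3$ and fixing the others) clears the stray $(2,3)$-entry. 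With $B_n$ so defined, the identity $(B_n^t\,\Theta_{n,\theta}\,B_n)_{ij}=\theta\,\langle f_i,f_j\rangle$ yields $B_n^t\Theta_{n,\theta}B_n=\Theta$, and I would close by recording the explicit $B_5=B_{10}$, $B_8=B_{12}$ and checking the product by inspection.

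Granting (1), parts (2) and (3) are immediate. Setting $B:=B_n^{-1}$, the relation $B_n^t\Theta_{n,\theta}B_n=\Theta$ rewrites as $\Theta=(B^{-1})^t\Theta_{n,\theta}B^{-1}$, which is exactly the hypothesis of Proposition~\ref{conj} with $\Theta'=\Theta_{n,\theta}$. Part~(1) of that proposition then supplies the group isomorphism $\psi(A)=BAB^{-1}=B_n^{-1}AB_n$ from $G_{\Theta_{n,\theta}}$ onto $G_\Theta$, i.e.\ $G_\Theta=B_n^{-1}G_{\Theta_{n,\theta}}B_n$, giving (2); and the $C^*$-isomorphism $\rho=\Ad\U_{B}:\A_{\Theta_{n,\theta}}\to\A_\Theta$ furnished by Proposition~\ref{conj}(2) (equivalently by Remark~\ref{isomorphisms}(1) applied to $A=B_n^{-1}$) gives (3). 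The only step with real content is thus the construction of $B_n$ in (1), and even there the sole obstacle is the mild one of verifying that every form involved is unimodular; once that Pfaffian check is in place, the symplectic normal-form argument proceeds over $\Z$ and everything else is a formal appeal to Proposition~\ref{conj}.
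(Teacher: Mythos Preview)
Your proposal is correct and follows essentially the same route as the paper: exhibit explicit matrices $B_5=B_{10}$ and $B_8=B_{12}$ in $\GL_4(\Z)$ for part (1), then derive (2) and (3) formally from Proposition~\ref{conj}. The only cosmetic difference is that the paper simply writes down the two matrices and leaves the verification to the reader, whereas you supply the symplectic normal-form rationale; your shear $e_3\mapsto e_1+e_3$ yields a different (but equally valid) $B_5$ than the paper's choice $e_2\mapsto e_2+e_4$, and your permutation for $n=8,12$ coincides with the paper's $B_8$.
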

\begin{proof}
(1) The following  $B_n$ is the desired matrix for each $n$:
\[
B_5=B_{10}:=
\begin{pmatrix}
   {1}&{0}&{0}&{0}\\
   {0}&{1}&{0}&{0}\\
   {0}&{0}&{1}&{0}\\
   {0}&{1}&{0}&{1}
\end{pmatrix},~
B_8=B_{12}:=\begin{pmatrix}
   {1}&{0}&{0}&{0}\\
   {0}&{0}&{1}&{0}\\
   {0}&{1}&{0}&{0}\\
   {0}&{0}&{0}&{1}
\end{pmatrix}. 
\]

(2) and (3) then follow from Proposition~\ref{conj} and its proof.
\end{proof}

\vskip 1pc
\noindent
 Since, in the above situation,  
 $C_n\in G_{\Theta_{n,\theta}}$, for $\theta \in \R$ and $n=5,8,10,12$, and    
 $$C\mapsto (B_n)^{-1}C  B_n: G_{\Theta_{n,\theta}}\to G_\Theta$$ 
 is a group isomorphism,  
 we see that  
 $$A_n:=(B_n)^{-1}C_n B_n $$  
 are the matrices (acting on $\A_\Theta$) 
 of order $n$ for $n=5,8,10,12$ by Lemma~\ref{B_n}(ii), and actually given by
\begin{equation}\label{A_n}
\begin{aligned}
A_5&=
\begin{pmatrix}
{0}&{-1}&{0}&{-1}\\{1}&{-1}&{0}&{-1}\\{0}&{0}&{0}&{-1}\\{-1}&{0}&{1}&{0}
\end{pmatrix},~
A_8=
\begin{pmatrix}
{0}&{0}&{0}&{-1}\\{0}&{0}&{1}&{0}\\{1}&{0}&{0}&{0}\\{0}&{1}&{0}&{0}
\end{pmatrix},\\
A_{10}&=
\begin{pmatrix}
{0}&{-1}&{0}&{-1}\\
{1}&{1}&{0}&{1}\\
{0}&{0}&{0}&{-1}\\
{-1}&{0}&{1}&{0}
\end{pmatrix},~
A_{12}=
\begin{pmatrix}
{0}&{0}&{0}&{-1}\\
{0}&{0}&{1}&{1}\\
{1}&{0}&{0}&{0}\\
{0}&{1}&{0}&{0}
\end{pmatrix}. 
\end{aligned}
\end{equation}

\vskip 1pc

\begin{prop}\label{4-torus} 
Every  4-dimensional noncommutative torus of the form 
$\A_\theta\otimes \A_\theta$ admits 
a canonical action $\af_n$ of $\Z_n=\langle A_n\rangle$ 
in {\rm (\ref{A_n})} for $n=5,8,10,12$. 
More precisely, 
if $\A_\theta\otimes \A_\theta=C^*(u_1,u_2)\otimes C^*(u_3,u_4)$ is 
generated by 4 unitaries $u_i$'s satisfying 
$u_2 u_1=\exp(2\pi i\theta)u_1 u_2$, $u_4 u_3=\exp(2\pi i\theta)u_3 u_4$, 
and $u_k u_l= u_l u_k$ for $k=1,2$ and $l=3,4$,  
we have the following: 
\begin{enumerate} 
\item[(1)] $\af_5:\Z_5\to \Aut(\A_\theta\otimes\A_\theta)$ is induced by the 
automorphism;   
\begin{align}\label{case5}
u_1 & \mapsto u_2u_4^*,\  u_2\mapsto \exp(\pi i \theta)u_1^*u_2^*, \\
u_3 & \mapsto  u_4, \ \ \ \  u_4 \mapsto  \exp(\pi i \theta)u_1^*u_2^*u_3^*. \nonumber
\end{align}

\item[(2)] $\af_8:\Z_8\to \Aut(\A_\theta\otimes\A_\theta)$ is induced by the 
automorphism;       
\begin{align}
 u_1 & \mapsto u_3,\ \ \ u_2 \mapsto u_4,\\
u_3& \mapsto u_2,\ \ \ u_4 \mapsto u_1^*.\nonumber
\end{align}
 
\item[(3)] $\af_{10}:\Z_{10}\to \Aut(\A_\theta\otimes\A_\theta)$ is induced by the 
automorphism;       
\begin{align}
u_1 & \mapsto u_2u_4^*,~u_2 \mapsto \exp(-\pi i \theta)u_1^*u_2,\\
u_3 & \mapsto u_4,\ \ \ \ u_4 \mapsto \exp(-\pi i \theta)u_1^*u_2u_3^*.\nonumber                                                                            
\end{align}

\item[(4)]  $\af_{12}:\Z_{12}\to \Aut(\A_\theta\otimes\A_\theta)$ is induced by the 
automorphism;       
\begin{align}
u_1& \mapsto u_3,~u_2 \mapsto u_4,\\
u_3& \mapsto u_2,~u_4 \mapsto \exp(-\pi i \theta)u_1^*u_2.\nonumber
\end{align}

\end{enumerate}
\end{prop}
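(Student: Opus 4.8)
The plan is to reduce everything to the single identity $\af_{A_n}(u_i)=l_\Theta(A_n e_i)$ and then apply the expansion formula (\ref{expansion}). First I would record that the generators of $\A_\theta\otimes\A_\theta=C^*(u_1,u_2)\otimes C^*(u_3,u_4)$ satisfying the stated relations are precisely the canonical generators $u_i=l_\Theta(e_i)$ of $\A_\Theta$ for $\Theta$ the matrix in (\ref{theta-theta}): those relations are exactly (\ref{commutation relation}) with the only nonzero upper-triangular entries $\theta_{12}=\theta_{34}=\theta$.

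For the existence of the action I would invoke Lemma~\ref{B_n}. Since $\Theta_{n,\theta}\in\TT_{4,C_n}(\R)$ by (\ref{Theta_5,10}) and (\ref{Theta_8,12}), we have $C_n\in G_{\Theta_{n,\theta}}$; and Lemma~\ref{B_n}(2) gives $G_\Theta=B_n^{-1}G_{\Theta_{n,\theta}}B_n$, whence $A_n=B_n^{-1}C_nB_n\in G_\Theta$. Thus $\Z_n=\langle A_n\rangle\subset G_\Theta$ acts canonically on $\A_\Theta\cong\A_\theta\otimes\A_\theta$ in the sense of Definition~\ref{canonical automorphism}, and $A_n$ has order $n$ by construction.

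The explicit formulas are then a direct column-by-column computation. Since $\af_{A_n}(u_i)=\af_{A_n}(l_\Theta(e_i))=l_\Theta(A_n e_i)$ and $A_n e_i$ is the $i$-th column of $A_n$ displayed in (\ref{A_n}), I would read off the vector $y=A_n e_i=(y_1,y_2,y_3,y_4)^t$ and apply (\ref{expansion}) to obtain $l_\Theta(y)=\exp\big(\pi i\sum_{j<k}y_jy_k\theta_{jk}\big)\,u_1^{y_1}u_2^{y_2}u_3^{y_3}u_4^{y_4}$. Because only $\theta_{12}$ and $\theta_{34}$ are nonzero (each equal to $\theta$), the phase collapses to $\exp(\pi i(y_1y_2+y_3y_4)\theta)$, so each of the sixteen cases reduces to evaluating this single scalar. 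For instance, when $n=5$ the first column $(0,1,0,-1)^t$ gives phase $1$ and hence $u_1\mapsto u_2u_4^*$, while the second column $(-1,-1,0,0)^t$ gives phase $\exp(\pi i\theta)$ and hence $u_2\mapsto\exp(\pi i\theta)u_1^*u_2^*$, matching (\ref{case5}).

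The computation is entirely mechanical; the only point requiring care is the bookkeeping of the quadratic phase $\exp(\pi i(y_1y_2+y_3y_4)\theta)$ across the four values of $n$ and the four generators, together with the sign conventions $u_i^{-1}=u_i^*$. I do not anticipate any conceptual obstacle, so the remaining task is simply to carry out and tabulate these sixteen scalar evaluations correctly.
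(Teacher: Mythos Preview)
Your proposal is correct and follows essentially the same approach as the paper: identify $u_i=l_\Theta(e_i)$ for $\Theta$ as in (\ref{theta-theta}), note that $A_n\in G_\Theta$ (which the paper establishes in the discussion preceding the proposition via Lemma~\ref{B_n}), and then compute $\af_{A_n}(u_i)=l_\Theta(A_ne_i)$ column by column. The only cosmetic difference is that you invoke the closed-form expansion formula (\ref{expansion}) to read off the phase $\exp(\pi i(y_1y_2+y_3y_4)\theta)$ in one stroke, whereas the paper splits $l_\Theta(A_5e_i)$ using the cocycle identity $l_\Theta(x+y)=\overline{\omega_\Theta(x,y)}\,l_\Theta(x)l_\Theta(y)$ step by step; the two computations are equivalent and your version is arguably tidier for tabulating all sixteen cases.
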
   

\begin{proof} 
We only show the case (1) here because the rest can be 
done similarly. 
Since $\A_\Theta$ is the twisted group algebra 
$C^*(\Z^4,\omega_\Theta)=C^*\{l_\Theta (e_i): 1\leq i\leq 4\}$ 
(\ref{nc torus})with the identification 
$u_i:=l_\Theta(e_i)$ for each $i$, the action $\af_5$ of 
$\Z_5=\langle A_5\rangle$ is determined by $\af_5(k)(l_\Theta(e_i))$  
for $k\in \Z_5$ and $ 1\leq i\leq 4$.
 For convenience, we write simply $\af$ for $\af_5(1)$ to have:
\begin{align*} 
\af(l_\Theta(e_1))& =l_\Theta (A_5 e_1) =l_\Theta (e_2-e_4)\\
                  & = \overline{\omega_\Theta(e_2,-e_4)}\, l_\Theta (e_2) l_\Theta(e_4)^*\\
                  & = \exp(-\pi i \langle \Theta e_2,-e_4\rangle)\, l_\Theta (e_2) l_\Theta(e_4)^*\\
                  & = l_\Theta (e_2) l_\Theta(e_4)^*, \\
\af(l_\Theta(e_2))& = l_\Theta (A_5 e_2)= \exp(\pi i \theta)
                      l_\Theta (e_1)^*l_\Theta (e_2)^*,\\
\af(l_\Theta(e_3))& = l_\Theta (A_5 e_3)= l_\Theta (e_4),\\    
\af(l_\Theta(e_4))& = l_\Theta (A_5 e_4)= \exp(\pi i \theta)
                      l_\Theta (e_1)^*l_\Theta (e_2)^*l_\Theta (e_3)^*.                                   
\end{align*}                  
Thus $\af_5$ is the action of $\Z_5$ generated by the automorphism $\af$ sending 
$u_1$ to $u_2 u_4^*$ and so on as stated in (\ref{case5}). 
\end{proof}

\vskip 1pc

\end{document}